\newtheorem{theo}{Theorem}[section]
\newtheorem{lem}[theo]{Lemma}
\newtheorem{cor}[theo]{Corollary}
\newtheorem{prop}[theo]{Proposition}
\newtheorem{conject}[theo]{Conjecture}
\renewenvironment{proof}{ \emph{Proof}}{$\Box$}
\newtheorem{defi}[theo]{Definition}
\newcommand{\supp}{\mathrm{supp}}
\newcommand{\ch}{\mathrm{ch}}
\newcommand{\Soc}{\mathrm{Soc}}
\newcommand{\pro}{\mathrm{pro}}
\newcommand{\ind}{\mathrm{ind}}
\newcommand{\Hom}{\mathrm{Hom}}
\newcommand{\rk}{\mathrm{rk}}
\newcommand{\field}[1]{\mathbb{#1}}
\newcommand{\bb}{\mathfrak{b}}
\renewcommand{\gg}{\mathfrak{g}}
\newcommand{\hh}{\mathfrak{h}}
\newcommand{\kk}{\mathfrak{k}}
\renewcommand{\ll}{\mathfrak{l}}
\newcommand{\mm}{\mathfrak{m}}
\newcommand{\nn}{\mathfrak{n}}
\newcommand{\pp}{\mathfrak{p}}
\newcommand{\qq}{\mathfrak{q}}
\renewcommand{\ss}{\mathfrak{s}}
\renewcommand{\sp}{\mathfrak{sp}}
\newcommand{\so}{\mathfrak{so}}
\renewcommand{\tt}{\mathfrak{t}}
\renewcommand{\sl}{\mathfrak{sl}}
\newcommand{\ZZ}{\mathbb{Z}}
\def\cplus{\hbox{$\subset${\raise0.3ex\hbox{\kern -0.55em ${\scriptscriptstyle +}$}}\ }}
\def\clplus{\hbox{$\subset${\raise0.3ex\hbox{\kern -0.55em ${\scriptscriptstyle +}$}}\ }}
\def\crplus{\hbox{$\supset${\raise1.05pt\hbox{\kern -0.55em ${\scriptscriptstyle +}$}}\ }}
\title[Algebraic methods in the theory of generalized Harish-Chandra modules]{Algebraic methods in the theory of generalized Harish-Chandra modules}
\author[Ivan Penkov]{\;Ivan~Penkov}
\address{
Ivan Penkov 
\newline School of Engineering and Science
\newline Jacobs University Bremen
\newline Campus Ring 1, Bremen
\newline 28759, Bremen, Germany}
\email{i.penkov@jacobs-university.de}
\author[Gregg Zuckerman]{\;Gregg Zuckerman}
\address{
Gregg Zuckerman
\newline Department of Mathematics
\newline Yale University
\newline 10 Hillhouse Avenue, P.O. Box 208283
\newline New Haven, CT 06520-8283, USA}
\email{gregg.zuckerman@yale.edu}
\begin{document}

\begin{abstract}
This paper is a review of results on generalized Harish-Chandra modules in the framework of cohomological induction. The main results, obtained during the last 10 years, concern the structure of the fundamental series of $(\gg,\kk)-$modules, where $\gg$ is a semisimple Lie algebra and $\kk$ is an arbitrary algebraic reductive in $\gg$ subalgebra. These results lead to a classification of simple $(\gg,\kk)-$modules of finite type with generic minimal $\kk-$types, which we state. We establish a new result about the Fernando-Kac subalgebra of a fundamental series module. In addition, we pay special attention to the case when $\kk$ is an eligible $r-$subalgebra (see the definition in section \ref{sect:4}) in which we prove stronger versions of our main results. If $\kk$ is eligible, the fundamental series of $(\gg,\kk)-$modules yields a natural algebraic generalization of Harish-Chandra's discrete series modules. 

\textbf{Mathematics Subject Classification (2010).} Primary 17B10, 17B55. 

\textbf{Key words:} generalized Harish-Chandra module, $(\gg,\kk)-$module of finite type, minimal $\kk-$type, Fernando-Kac subalgebra, eligible subalgebra.
\end{abstract}

\maketitle

\section*{Introduction}\label{sect:intro}
Generalized Harish-Chandra modules have now been actively studied for more than 10 years. A \textit{generalized Harish-Chandra module} $M$ over a finite-dimensional reductive Lie algebra $\gg$ is a $\gg-$module $M$ for which there is a reductive in $\gg$ subalgebra $\kk$ such that as a $\kk-$module, $M$ is the direct sum of finite-dimensional generalized $\kk-$isotypic components. If $M$ is irreducible, $\kk$ acts necessarily semisimply on $M$, and in what follows we restrict ourselves to the study of generalized Harish-Chandra modules on which $\kk$ acts semisimply; see \cite{Z} for an introduction to the topic.

In this paper we present a brief review of results obtained in the past 10 years in the framework of algebraic representation theory, more specifically in the framework of cohomological induction, see \cite{KV} and \cite{Z}. In fact, generalized Harish-Chandra modules have been studied also with geometric methods, see for instance \cite{PSZ} and \cite{PS1}, \cite{PS2}, \cite{PS3}, \cite{Pe}, but the geometric point of view remains beyond the scope of the current review. In addition, we restrict ourselves to finite-dimensional Lie algebras $\gg$ and do not review the paper \cite{PZ4}, which deals with the case of locally finite Lie algebras. We omit the proofs of most results which have already appeared.

The cornerstone of the algebraic theory of generalized Harish-Chandra modules so far is our work \cite{PZ2}. In this work we define the notion of simple generalized Harish-Chandra modules with generic minimal $\kk-$type and provide a classification of such modules. The result extends in part the Vogan-Zuckerman classification of simple Harish-Chandra modules. It leaves open the questions of existence and classification of simple $(\gg,\kk)-$modules of finite type whose minimal $\kk-$types are not generic. While the classification of such modules presents the main open problem in the theory of generalized Harish-Chandra modules, in the note \cite{PZ3} we establish the existence of simple $(\gg,\kk)-$modules with arbitrary given minimal $\kk-$type.

In the paper \cite{PZ5} we establish another general result, namely the fact that each module in the fundamental series of generalized Harish-Chandra modules has finite length. We then consider in detail the case when $\kk=\sl(2)$. In this case the highest weights of $\kk-$types are just non-negative integers $\mu$ and the genericity condition is the inequality $\mu \geq \Gamma$, $\Gamma$ being a bound depending on the pair $(\gg,\kk)$. In \cite{PZ5} we improve the bound $\Gamma$ to an, in general, much lower bound $\Lambda$. Moreover, we show that in a number of low dimensional examples the bound $\Lambda$ is sharp in the sense that the our classification results do not hold for simple $(\gg,\kk)-$ modules with minimal $\kk-$type $V(\mu)$ for $\mu$ lower than $\Lambda$. In \cite{PZ5} we also conjecture that the Zuckerman functor establishes an equivalence of a certain subcategory of the thickening of category $O$ and a subcategory of the category of $(\gg,\kk \simeq \sl(2))-$modules. 

Sections \ref{sect:2} and \ref{sect:3} of the present paper are devoted to a brief review of the above results. We also establish some new results in terms of the algebra $\tilde{\kk} := \kk+C(\kk)$ (where $C(\cdot)$ stands for centralizer in $\gg$). A notable such result is Corollary \ref{corr:6} which gives a sufficient condition on a simple $(\gg,\kk)-$module $M$ for $\tilde{\kk}$ to be a maximal reductive subalgebra of $\gg$ which acts locally finitely on $M$. 

The idea of bringing $\tilde{\kk}$ into the picture leads naturally to considering a preferred class of reductive subalgebras $\kk$ which we call eligible: they satisfy the condition $C(\tt) = \tt+C(\kk)$ where $\tt$ is Cartan subalgebra of $\kk$. In section \ref{sect:5} we study a natural generalization of Harish-Chandra's discrete series to the case of an eligible subalgebra $\kk$. A key statement here is that under the assumption of eligibility of $\kk$, the isotypic component of the minimal $\kk-$type of a generalized discrete series module is an irreducible $\tilde{\kk}-$module (Theorem \ref{th3}).
 
\section*{Acknowledgements}\label{sect:acknow.}
I. Penkov thanks Yale University for its hospitality and partial financial support during the spring of 2012 when this paper was conceived, as well as the Max Planck Institute for Mathematics in Bonn where the work on the paper was continued. G. Zuckerman thanks Jacobs University for its hospitality. Both authors have been partially supported by the DFG through Priority Program $1388$ ``Representation theory''.
 
\vspace{2cm}
 
\section{Notation and preliminary results}\label{sect:1}
We start by recalling the setup of \cite{PZ2} and \cite{PZ5}.
\subsection{Conventions}\label{sect:1.1}

The ground field is $\field{C}$, and if not explicitly stated otherwise, all vector spaces and Lie algebras are defined over $\field{C}$. The sign $\otimes$ denotes tensor product over $\field{C}$. The superscript $^*$ indicates dual space. The sign $\clplus$ stands for semidirect sum of Lie algebras (if $\ll=\ll '\clplus\ll ''$, then $\ll '$ is an ideal in $\ll$ and $\ll ''\cong\ll /\ll '$). $H^\cdot(\ll,M)$ stands for the cohomology of a Lie algebra $\ll$ with coefficients in an $\ll$-module $M$, and $M^\ll=H^0(\ll,M)$ stands for space of $\ll$-invariants of $M$. By $Z(\ll)$ we denote the center of $\ll$, and by $\ll_{ss}$ we denote the semisimple part of $\ll$ when $\ll$ is reductive. $\Lambda^\cdot(\cdot)$ and $S^\cdot(\cdot)$ denote respectively the exterior and symmetric algebra.

If $\ll$ is a Lie algebra, then $U(\ll)$ stands for the enveloping algebra of $\ll$ and $Z_{U(\ll)}$ denotes the center of $U(\ll)$. We identify $\ll$-modules with $U(\ll)$-modules. It is well known that if $\ll$ is finite dimensional and $M$ is a simple $\ll$-module (or equivalently a simple $U(\ll)$-module), $Z_{U(\ll)}$ acts on $M$ via a $Z_{U(\ll)}$-\textit{character}, i.e. via an algebra homomorphism $\theta_{M}: Z_{U(\ll)}\rightarrow\field{C}$, see Proposition 2.6.8 in \cite{Dix}.

We say that an $\ll$-module $M$ is \textit{generated} by a subspace $M'\subseteq M$ if $U(\ll)\cdot M'=M$, and we say that $M$ is \textit{cogenerated} by $M'\subseteq M$, if for any non-zero homomorphism $\psi :M\rightarrow\bar{M}$, $M'\cap\ker\psi\neq\{0\}$. 

By $\Soc M$ we denote the socle (i.e. the unique maximal semisimple submodule) of an $\ll$-module $M$. If $\omega\in\ll^*$, we put $M^\omega:=\{m\in M\; |\; \ll\cdot m=\omega(\ll)m \;\forall l \in\ll\}$. By $\supp_\ll M$ we denote the set $\{\omega\in\ll^*\; |\; M^\omega\neq 0\}$.

A finite \textit{multiset} is a function $f$ from a finite set $D$ into $\field{N}$. A \textit{submultiset} of $f$ is a multiset $f'$ defined on the same domain $D$ such that $f'(d)\leq f(d)$ for any $d\in D$. For any finite multiset $f$, defined on a subset $D$ of a vector space, we put $\rho_f:=\frac{1}{2}\sum_{d\in D}f(d)d$.

If $\dim M<\infty$ and $M=\bigoplus_{\omega\in\ll^*}M^\omega$, then $M$ determines the finite multiset $\ch_{\ll}M$ which is the function $\omega\mapsto\dim M^\omega$ defined on $\supp_{\ll}M$.

\subsection{Reductive subalgebras, compatible parabolics and generic $\kk$-types}\label{sect:1.2}

Let $\gg$ be a finite-dimensional semisimple Lie algebra. By $\gg$-mod we denote the category of $\gg$-modules. Let $\kk\subset\gg$ be an algebraic subalgebra which is reductive in $\gg$. We set $\tilde{\kk} = \kk + C(\kk)$ and note that $\tilde{\kk}=\kk_{ss} \oplus C(\kk)$ where $C(\cdot)$ stands for centralizer in $\gg$. We fix a Cartan subalgebra $\tt$ of $\kk$ and let $\hh$ denote an as yet unspecified Cartan subalgebra of $\gg$. Everywhere, but in subsection \ref{sect:1.3} below, we assume that $\tt \subseteq \hh$, and hence that $\hh \subseteq C(\tt)$. By $\Delta$ we denote the set of $\hh$-roots of $\gg$, i.e. $\Delta=\{\supp_\hh\gg\}\setminus\{0\}$. Note that, since $\kk$ is reductive in $\gg$, $\gg$ is a $\tt$-weight module, i.e. $\gg=\bigoplus_{\eta\in\tt^*}\gg^\eta$. We set $\Delta_\tt:=\{\supp_\tt\gg\}\setminus\{0\}$. Note also that the $\field{R}$-span of the roots of $\hh$ in $\gg$ fixes a real structure on $\hh^*$, whose projection onto $\tt^*$ is a well-defined real structure on $\tt^*$. In 
what follows, 
we 
denote by $\mathrm{Re}\eta$ the real part of an 
element $\eta\in\tt^*$. We fix also a Borel subalgebra $\bb_\kk\subseteq\kk$ with $\bb_\kk\supseteq\tt$. Then $\bb_\kk=\tt\crplus\nn_\kk$, where $\nn_\kk$ is the nilradical of $\bb_\kk$. We set $\rho:=\rho_{\ch_\tt\nn_\kk}$. The quartet $\gg,\kk,\bb_\kk,\tt$ will be fixed throughout the paper. By $W$ we denote the Weyl group of $\gg$.

As usual, we parametrize the characters of $Z_{U(\gg)}$ via the Harish-Chandra homomorphism. More precisely, if $\bb$ is a given Borel subalgebra of $\gg$ with $\bb\supset\hh$ ($\bb$ will be specified below), the $Z_{U(\gg)}$-character corresponding to $\zeta\in\hh^*$ via the Harish-Chandra homomorphism defined by $\bb$ is denoted by $\theta_\zeta$ ($\theta_{\rho_{\ch_\hh\bb}}$ is the trivial $Z_{U(\gg)}$-character). Sometimes we consider a reductive subalgebra $\ll \subset \gg$ instead of $\gg$ and apply this convention to the characters of $Z_{U(\ll)}$. In this case we write $\theta_{\zeta}^{\ll}$ for $\zeta \in \hh_{\ll}^*$, where $\hh_{\ll}$ is a Cartan subalgebra of $\ll$. 

By $\langle\cdot\,\,\cdot\rangle$ we denote the unique $\gg$-invariant symmetric bilinear form on $\gg^*$ such that $\langle\alpha,\alpha\rangle=2$ for any long root of a simple component of $\gg$. The form $\langle\cdot\,,\,\cdot\rangle$ enables us to identify $\gg$ with $\gg^*$. Then $\hh$ is identified with $\hh^*$, and $\kk$ is identified with $\kk^*$. We sometimes consider $\langle\cdot\,,\,\cdot\rangle$ as a form on $\gg$. The superscript $\perp$ indicates orthogonal space. Note that there is a canonical $\kk$-module decomposition $\gg=\kk\oplus\kk^\perp$ and a canonical decomposition $\hh=\tt\oplus\tt^\perp$ with $\tt^\perp\subseteq\kk^\perp$. We also set $\parallel\zeta\parallel^2:=\langle\zeta,\zeta\rangle$ for any $\zeta\in\hh^*$.

We say that an element $\eta\in\tt^*$ is $(\gg,\kk)$-\textit{regular} if $\langle\mathrm{Re}\eta,\sigma\rangle\neq 0$ for all $\sigma\in\Delta_\tt$. To any $\eta\in\tt^*$ we associate the following parabolic subalgebra $\pp_\eta$ of $\gg$: $$\pp_\eta=\hh\oplus(\bigoplus_{\alpha\in\Delta_\eta}\gg^\alpha),$$ where $\Delta_\eta:=\{\alpha\in\Delta\; |\; \langle\mathrm{Re}\eta,\alpha\rangle\geq 0\}$. By $\mm_\eta$ and $\nn_\eta$ we denote respectively the reductive part of $\pp$ (containing $\hh$) and the nilradical of $\pp$. In particular $\pp_\eta=\mm_\eta\crplus\nn_\eta$, and if $\eta$ is $\bb_\kk$-dominant, then $\pp_\eta\cap\kk=\bb_\kk$. We call $\pp_\eta$ a \textit{$\tt$-compatible parabolic subalgebra}. Note that $$\pp_\eta = C(\tt)\oplus(\bigoplus_{\beta \in \Delta^+_{\tt,\eta}}\,\gg^\beta)$$ where $\Delta^+_{\tt,\eta} := \lbrace\beta\in\Delta_\tt\,|\,\langle \mathrm{Re}\eta,\beta\rangle\,>\,0\rbrace$. Hence, $\pp_\eta$ depends upon our choice of $\tt$ and $\eta$, but not upon the choice of $\hh$.  

A $\tt$-compatible parabolic subalgebra $\pp=\mm\crplus\nn$ (i.e. $\pp=\pp_\eta$ for some $\eta\in\tt^*$) is $\tt$-\textit{minimal} (or simply \textit{minimal}) if it does not properly contain another $\tt$-compatible parabolic subalgebra. It is an important observation that if $\pp=\mm\crplus\nn$ is minimal, then $\tt\subseteq Z(\mm)$. In fact, a $\tt$-compatible parabolic subalgebra $\pp$ is minimal if and only if $\mm$ equals the centralizer $C(\tt)$ of $\tt$ in $\gg$, or equivalently if and only if $\pp=\pp_\eta$ for a $(\gg,\kk)$-regular $\eta \in \tt^*$. In this case $\nn\cap\kk =\nn_\kk$.

Any $\tt$-compatible parabolic subalgebra $\pp =\pp_\eta$ has a well-defined opposite parabolic subalgebra $\bar{\pp}:=\pp_{-\eta}$; clearly $\pp$ is minimal if and only if $\bar{\pp}$ is minimal.

A \textit{$\kk$-type} is by definition a simple finite-dimensional $\kk$-module. By $V(\mu)$ we denote a $\kk$-type with $\bb_\kk$-highest weight $\mu$. The weight $\mu$ is then $\kk$-integral (or, equivalently, $\kk_{ss}-$integral) and $\bb_\kk$-dominant.

Let $V(\mu)$ be a $\kk$-type such that $\mu+2\rho$ is $(\gg,\kk)$-regular, and let $\pp=\mm\crplus\nn$ be the minimal compatible parabolic subalgebra $\pp_{\mu+2\rho}$. Put $\tilde{\rho}_\nn :=\rho_{\ch_\hh \nn}$ and $\rho_\nn:=\rho_{\ch_\tt\nn}$. Clearly $\rho_\nn =\tilde{\rho}_\nn |_\tt$. We define $V(\mu)$ to be \textit{generic} if the following two conditions hold:
\begin{enumerate}
\item $\langle\mathrm{Re}\mu+2\rho-\rho_\nn,\alpha\rangle\geq 0\;\forall\alpha\in\supp_\tt\nn_\kk$;
\item $\langle\mathrm{Re}\mu+2\rho-\rho_S,\rho_S\rangle >0$ for every submultiset $S$ of $\ch_\tt\nn$.
\end{enumerate}

It is easy to show that there exists a positive constant $C$ depending only on $\gg,\kk$ and $\pp$ such that $\langle\mathrm{Re}\mu+2\rho,\alpha\rangle >C$ for every $\alpha\in\supp_\tt\nn$ implies $\pp_{\mu+2\rho}=\pp$ and that $V(\mu)$ is generic.

\subsection{Generalities on $\gg-$modules}\label{sect:1.3}

Suppose $M$ is a $\gg-$module and $\ll$ is a reductive subalgebra of $\gg$. $M$ is \textit{locally finite over $Z_{U(\ll)}$} if every vector in $M$ generates a finite-dimensional $Z_{U(\ll)}-$module. Denote by $\mathcal{M}(\gg,Z_{U(\ll)})$ the full subcategory of $\gg-$modules which are locally finite over $Z_{U(\ll)}$. 

Suppose $M \in \mathcal{M}(\gg,Z_{U(\ll)})$ and $\theta$ is a $Z_{U(\ll)}-$character. Denote by $P(\ll, \theta)(M)$ the generalized $\theta-$eigenspace of the restriction of $M$ to $\ll$. The $Z_{U(\ll)}-$\textit{spectrum} of $M$ is the set of characters $\theta$ of $Z_{U(\ll)}$ such that $P(\ll, \theta)(M) \neq 0$. Denote the $Z_{U(\ll)}$ spectrum of $M$ by $\sigma(\ll,M)$. We say that $\theta$ is a \textit{central character of $\ll$ in $M$} if $\theta \in \sigma(\ll,M)$. The following is a standard fact.

\begin{lem}\label{lem:1}
 If $M \in \mathcal{M}(\gg,Z_{U(\ll)})$, then 
 $$M = \bigoplus_{\theta \in \sigma(\ll,M)} P(\ll,\theta)(M).$$
\end{lem}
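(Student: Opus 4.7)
The plan is to reduce the statement to the standard fact that a commutative algebra acting on a finite-dimensional complex vector space decomposes it into generalized joint eigenspaces, one for each algebra homomorphism to $\field{C}$. Local finiteness over $Z_{U(\ll)}$ is what brings this linear-algebra fact into play.

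First, I would prove the spanning statement $M = \sum_{\theta \in \sigma(\ll,M)} P(\ll,\theta)(M)$. Fix $m \in M$ and put $N := Z_{U(\ll)} \cdot m + \field{C}\cdot m$; by the defining hypothesis of $\mathcal{M}(\gg,Z_{U(\ll)})$, $N$ is a finite-dimensional $Z_{U(\ll)}$-stable subspace. The image $A$ of $Z_{U(\ll)}$ in $\End_{\field{C}}(N)$ is a finite-dimensional commutative $\field{C}$-algebra, hence Artinian, hence a finite product of local Artinian $\field{C}$-algebras. Since $\field{C}$ is algebraically closed, the residue field of each local factor is $\field{C}$, so the factorization corresponds to a finite set of $\field{C}$-algebra characters $\theta_1,\dots,\theta_s \colon Z_{U(\ll)}\to\field{C}$, and $N$ splits as $N = \bigoplus_{j=1}^s N_{\theta_j}$ with $N_{\theta_j}$ the generalized joint eigenspace on which each $z - \theta_j(z)$ acts nilpotently. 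By definition $N_{\theta_j}\subseteq P(\ll,\theta_j)(M)$ and each $\theta_j$ lies in $\sigma(\ll,M)$, so $m$ is a finite sum of vectors in the subspaces $P(\ll,\theta)(M)$.

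Second, I would show that the sum is direct. Suppose $v_1+\cdots+v_k=0$ with $v_i\in P(\ll,\theta_i)(M)$ and the $\theta_i$ pairwise distinct. The subspace $W:=\sum_i Z_{U(\ll)}\cdot v_i$ is finite-dimensional (each $Z_{U(\ll)}\cdot v_i$ is finite-dimensional by local finiteness and is contained in $P(\ll,\theta_i)(M)$, since the defining condition $(z-\theta_i(z))^{n}\cdot v_i=0$ is preserved under the commutative action of $Z_{U(\ll)}$). Applying the same Artinian decomposition to $W$ shows that its intersections with the $P(\ll,\theta_i)(M)$ are independent, forcing each $v_i=0$.

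The only substantive point, and not really an obstacle, is the appeal to the structure theorem for finite-dimensional commutative $\field{C}$-algebras; everything else is bookkeeping once local finiteness has been used to cut down to a finite-dimensional situation. One should be mildly careful that $Z_{U(\ll)}$ itself is infinite-dimensional, but we never work with it in its entirety, only with its image in $\End_{\field{C}}(N)$ or $\End_{\field{C}}(W)$, which is finite-dimensional.
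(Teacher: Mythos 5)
The paper offers no proof of this lemma, merely asserting it as ``a standard fact.'' Your argument --- using local finiteness to cut down to a finite-dimensional $Z_{U(\ll)}$-stable subspace, decomposing the image of $Z_{U(\ll)}$ in its endomorphism ring as a product of local Artinian $\field{C}$-algebras to get the spanning statement, and then running the same decomposition on the span of a putative dependence relation to get directness --- is a correct and complete write-up of the standard proof, with the key bookkeeping points (that $P(\ll,\theta)(M)$ is $Z_{U(\ll)}$-stable by commutativity, and that one works only with finite-dimensional images of $Z_{U(\ll)}$) duly noted.
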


A $\gg-$module $M$ is \textit{locally Artinian over} $\ll$ if for every vector $v \in M$, $U(\ll)\cdot v$ is an $\ll-$module of finite length.

\begin{lem}\label{lem:2}
 If $M$ is locally Artinian over $\ll$, then $M \in \mathcal{M}(\gg, Z_{U(\ll)})$.
\end{lem}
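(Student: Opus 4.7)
The plan is to fix an arbitrary $v \in M$ and show that $Z_{U(\ll)} \cdot v$ is finite-dimensional over $\field{C}$. Set $N := U(\ll) \cdot v$; by the locally Artinian hypothesis $N$ has finite length $k$ as an $\ll$-module, and since $Z_{U(\ll)} \subseteq U(\ll)$ the $Z_{U(\ll)}$-submodule $Z_{U(\ll)} \cdot v$ is contained in $N$. Choose a composition series $0 = N_0 \subset N_1 \subset \cdots \subset N_k = N$ whose subquotients $N_i / N_{i-1}$ are simple $\ll$-modules.

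Next, I would invoke Dixmier's Proposition 2.6.8, already recalled in subsection 1.1, to attach to each simple $\ll$-module $N_i / N_{i-1}$ a $Z_{U(\ll)}$-character $\theta_i$. Setting $I_i := \ker \theta_i$ (a codimension-one ideal of $Z_{U(\ll)}$), one has $I_i \cdot N_i \subseteq N_{i-1}$, and chaining these containments yields $J \cdot N = 0$ for the product ideal $J := I_1 I_2 \cdots I_k$. In particular $J \cdot v = 0$, so the cyclic $Z_{U(\ll)}$-module $Z_{U(\ll)} \cdot v$ is a quotient of the algebra $Z_{U(\ll)} / J$.

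It therefore suffices to verify that $Z_{U(\ll)} / J$ is finite-dimensional over $\field{C}$. Since $\ll$ is finite-dimensional and reductive, $Z_{U(\ll)}$ is a Noetherian, finitely generated commutative $\field{C}$-algebra (in fact a polynomial ring, via Harish-Chandra). The quotient $Z_{U(\ll)} / J$ is again Noetherian and finitely generated over $\field{C}$, and its maximal spectrum is contained in the finite set $\{I_1, \ldots, I_k\}$, so it has Krull dimension zero. A Noetherian ring of Krull dimension zero is Artinian, and a finitely generated Artinian $\field{C}$-algebra is finite-dimensional. Thus $\dim_\field{C}(Z_{U(\ll)} \cdot v) < \infty$, and since $v \in M$ was arbitrary, $M \in \mathcal{M}(\gg, Z_{U(\ll)})$.

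The one (modest) obstacle is that the characters $\theta_i$ need not be distinct, so $J$ is not automatically radical; this is precisely what rules out a direct appeal to the Nullstellensatz and forces the Artinian-ring step. A fully elementary bypass is to pick finite algebra generators $z_1, \ldots, z_n$ of $Z_{U(\ll)}$: since each $z_j$ satisfies the polynomial identity $\prod_{i=1}^k (z_j - \theta_i(z_j)) = 0$ on $N$, the monomials $z_1^{a_1} \cdots z_n^{a_n} v$ with $0 \leq a_j < k$ already span $Z_{U(\ll)} \cdot v$, giving the explicit bound $\dim_\field{C}(Z_{U(\ll)} \cdot v) \leq k^n$.
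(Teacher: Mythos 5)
Your argument is correct and is exactly the expansion of the paper's one-line hint (``$Z_{U(\ll)}$ acts via a character on any simple $\ll$-module''): you pass to a composition series of $N = U(\ll)\cdot v$, use Dixmier 2.6.8 to assign a $Z_{U(\ll)}$-character $\theta_i$ to each subquotient, and conclude that the product of the kernels $I_i$ annihilates $v$. The self-contained ``elementary bypass'' at the end, giving the explicit bound $\dim_{\field{C}}(Z_{U(\ll)}\cdot v)\leq k^n$ from the relations $\prod_i(z_j-\theta_i(z_j))|_N=0$, is the cleaner way to finish and avoids the detour through Krull dimension.
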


\begin{proof}
The statement follows from the fact that $Z_{U(\ll)}$ acts via a character on any simple $\ll-$module.
\end{proof}

If $\pp$ is a parabolic subalgebra of $\gg$, by a $(\gg,\pp)-$\textit{module} $M$ we mean a $\gg-$module $M$ on which $\pp$ acts locally finitely. By $\mathcal{M}(\gg,\pp)$ we denote the full subcategory of $\gg-$modules which are $(\gg,\pp)-$modules.

In the remainder of this subsection we assume that $\hh$ is a Cartan subalgebra of $\gg$ such that $\hh_\ll :=  \hh \cap \ll$ is a Cartan subalgebra of $\ll$, and that $\pp$ is a parabolic subalgebra of $\gg$ such that $\hh \subset \pp$ and $\pp \cap \ll$ is a parabolic subalgebra of $\ll$. By $M$ we denote a $\gg-$module from $\mathcal{M}(\gg,\pp)$.

\begin{lem}\label{lemma:3}
 The set $\supp_\hh\, M$ is independent of the choice of $\hh \subseteq \pp$.
\end{lem}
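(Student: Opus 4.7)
The plan is to reduce the statement to a conjugacy fact inside $\pp$. Denote by $\nn$ the nilradical of $\pp$, so that $\pp = \mm \crplus \nn$. Given any two Cartan subalgebras $\hh, \hh' \subseteq \pp$ of $\gg$, they are conjugate under the adjoint action of $\exp(\nn)$: there exists $x \in \nn$ with $\mathrm{Ad}(\exp x)(\hh) = \hh'$. My strategy is to lift this conjugation to a linear automorphism of $M$, obtaining an intertwining of the $\hh$- and $\hh'$-weight decompositions. Under the canonical identification $\hh^* \cong (\hh')^*$ induced by the projection $\pp \to \pp/\nn$ --- which coincides with $\mathrm{Ad}(\exp x)$ modulo $\nn$ --- the two supports will then literally agree.

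The key technical input is that $\nn$ acts locally nilpotently on $M$. For $v \in M$, set $W := U(\pp)\cdot v$, which is finite-dimensional because $M \in \mathcal{M}(\gg,\pp)$. Since $\hh$ is abelian and $W$ is finite-dimensional, $W$ splits as a finite direct sum of generalized $\hh$-weight spaces $W = \bigoplus_\lambda W_{(\lambda)}$. Each root vector $x_\alpha \in \gg^\alpha \cap \nn$ shifts $W_{(\lambda)}$ into $W_{(\lambda+\alpha)}$, and because $\nn$ is the nilradical of a parabolic, the roots occurring in $\nn$ all lie on one side of a hyperplane in the real span of $\Delta$. Picking a real functional strictly positive on all of them, iterated application of any element of $\nn$ strictly raises this functional on generalized weights and must eventually leave the finite support of $W$; hence $\nn^k W = 0$ for some $k$.

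With local nilpotence in hand, $\exp(x)$ is a well-defined invertible operator on $M$, and I would verify the identity $\exp(x)\circ h\circ\exp(-x) = \mathrm{Ad}(\exp x)(h)$ as endomorphisms of $M$ for every $h \in \hh$. This is formal once the sums in sight are finite: on the right, $\mathrm{ad}(x)$ is nilpotent on $\gg$ so $\mathrm{Ad}(\exp x)(h)$ is a finite sum in $\pp$; on the left, the previous paragraph makes $\exp(\pm x)$ finite on each vector. Therefore $\exp(x)$ carries $M^\omega$ isomorphically onto $M^{\omega\circ\mathrm{Ad}(\exp x)^{-1}}$. Since $[\nn,\pp] \subseteq \nn$, the isomorphism $\mathrm{Ad}(\exp x): \hh \to \hh'$ descends to the identity on $\pp/\nn$, so it matches the canonical identification and the two supports are equal under it.

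The step I expect to be most delicate is merely interpretive: $\supp_\hh M$ and $\supp_{\hh'} M$ sit in distinct dual spaces, and the real content of the lemma is that they correspond under the canonical identification coming from $\pp/\nn$. Once one accepts that the conjugation $\mathrm{Ad}(\exp x)$ realizes precisely this identification --- which follows directly from $[\nn,\pp] \subseteq \nn$ --- nothing further is needed.
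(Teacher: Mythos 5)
Your central conjugacy claim is false as stated: two Cartan subalgebras of $\gg$ lying inside $\pp$ need \emph{not} be conjugate under $\exp(\nn)$ alone. Conjugation by $\exp(x)$ for $x \in \nn$ is the identity modulo $\nn$, so it cannot move the image of a Cartan under the projection $\pp \to \pp/\nn \cong \mm$; but two Cartans of $\gg$ inside $\pp$ can project to genuinely different Cartans of the Levi $\mm$. Concretely, in $\gg = \sl(3)$ with $\pp$ the $(2,1)$-block parabolic, the diagonal Cartan and the Cartan spanned by $\mathrm{diag}(1,1,-2)$ together with the semisimple element $e-f$ of the $\sl(2)$ in the Levi both lie in $\pp$, but they are not $\exp(\nn)$-conjugate since they have distinct images in $\mm$. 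This also undercuts your framing that the identification $\hh^* \cong (\hh')^*$ ``comes from $\pp/\nn$'': that identification only exists when the two Cartans project to the same Cartan of $\mm$, which is exactly the case your argument accidentally restricts to.

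The paper's proof handles this with a two-step conjugation: first an inner automorphism $\Psi$ carrying the Levi containing $\hh_1$ to the Levi containing $\hh_2$ (this part is realized by $\exp(\nn)$), and then an inner automorphism $\Phi$ of the target Levi $\mm_2$ carrying $\Psi(\hh_1)$ to $\hh_2$. Your local-nilpotence argument is sound and is exactly what's needed to make both steps act on $M$: for the first step $x \in \nn$ acts locally nilpotently by the argument you gave, and for the second step one writes $\Phi$ as a product of $\exp(\ad y)$ with $y$ nilpotent in $\mm_{ss}$, and such $y$ again acts locally nilpotently on every finite-dimensional $U(\pp) \cdot v$. So the machinery you built is correct; you just applied it to too small a group of automorphisms. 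Inserting the Levi conjugation step would make the proof complete and essentially identical in spirit to the paper's.
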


\begin{proof}
 Suppose $\hh_1$ and $\hh_2$ are Cartan subalgebras of $\gg$ such that $\hh_1, \hh_2 \subseteq \pp$. Let $\mm_j$ be the maximal reductive subalgebra of $\pp$ such that $\hh_j \subseteq \mm_j, j=1,2$. There exits an inner automorphism $\Psi(\mm_1) = \mm_2$. Then, $\Psi(\hh_1)$ and $\hh_2$ are Cartan subalgebras of $\mm_2$. There exists an inner automorphism $\Phi$ of $\mm_2$ such that $\Phi(\Psi(\hh_1)) = \hh_2$. Hence, for any finite dimensional $\pp-$module $W$, $\supp_{\hh_1}\,W = \supp_{\hh_2}\,W$. By assumption $M$ is a union of finite-dimensional $\pp-$modules. 
\end{proof}

\begin{prop}\label{prop:1}
 $M$ is locally Artinian over $\ll$.
\end{prop}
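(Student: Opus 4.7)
The strategy is to realize $U(\ll)\cdot v$, for any $v\in M$, as a submodule of a quotient of a generalized Verma module over $\ll$ and then to invoke the classical finite-length property of such modules.

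Fix $v\in M$. Since $M\in\mathcal{M}(\gg,\pp)$, the space $W:=U(\pp)\cdot v$ is finite-dimensional, and restriction makes $W$ into a finite-dimensional $(\pp\cap\ll)$-module; in particular $U(\ll)\cdot v\subseteq U(\ll)\cdot W$. By hypothesis $\pp\cap\ll$ is a parabolic subalgebra of $\ll$; let $\nn_\ll^-$ denote the nilradical of the opposite parabolic in $\ll$, and let $\mm_\ll\supseteq\hh_\ll$ be the Levi factor of $\pp\cap\ll$. The PBW decomposition
\[
U(\ll)\;=\;U(\nn_\ll^-)\cdot U(\pp\cap\ll)
\]
shows that $U(\ll)\cdot W$ is a quotient of the generalized Verma module
\[
I(W)\;:=\;U(\ll)\otimes_{U(\pp\cap\ll)} W.
\]
It therefore suffices to prove that $I(W)$ has finite length as an $\ll$-module.

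To establish this, I would first filter $W$ by $(\pp\cap\ll)$-submodules so that $\hh_\ll$ acts semisimply on every subquotient. The descending chain $W\supseteq\nn_\ll W\supseteq\nn_\ll^2 W\supseteq\ldots$, where $\nn_\ll$ is the nilradical of $\pp\cap\ll$, is a $(\pp\cap\ll)$-filtration that terminates at $0$ (since $\nn_\ll$ acts nilpotently on finite-dimensional modules), and its subquotients are $\mm_\ll$-modules. Refining each of these by generalized $Z(\mm_\ll)$-weight subspaces yields subsubquotients $Y$ on which $\nn_\ll$ acts by $0$, $Z(\mm_\ll)$ acts by a single character, and $\mm_{\ll,ss}$ acts via a finite-dimensional representation; hence $\hh_\ll$ acts semisimply on $Y$. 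Because the induction functor $U(\ll)\otimes_{U(\pp\cap\ll)}(-)$ is exact (by PBW again), we obtain a corresponding finite filtration of $I(W)$ whose subquotients are generalized Verma modules $I(Y)$ with $\hh_\ll$ acting semisimply on $Y$. Each $I(Y)$ lies in the BGG category $\mathcal{O}$ of $\ll$ taken with respect to any Borel of $\ll$ contained in $\pp\cap\ll$, and therefore has finite length by a classical theorem.

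The main obstacle is the last invocation: the finite-length theorem in category $\mathcal{O}$. It ultimately rests on Harish-Chandra's description of $Z_{U(\ll)}$ together with the finiteness of Weyl group orbits on $\hh_\ll^*$, which together bound both the number of central characters appearing in $I(Y)$ and the multiplicities of its simple subquotients. Once this input is granted, the rest of the argument is a routine application of PBW and of the explicit filtration of $W$ constructed above.
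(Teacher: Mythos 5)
Your proof is correct and follows the same underlying idea as the paper: reduce to the finiteness theorem for $\ll$-modules generated by a finite-dimensional subspace that is stable under a parabolic subalgebra of $\ll$. The paper invokes this as a one-line citation to Proposition 7.6.1 of Dixmier's \emph{Enveloping Algebras}, applied to the pair $(\ll, \ll\cap\pp)$, whereas you have essentially reconstructed the proof of that cited proposition by filtering $U(\pp)\cdot v$, inducing to generalized Verma modules for $\ll$, and appealing to the BGG finite-length theorem in category $\mathcal{O}$.
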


\begin{proof}
 We apply Proposition 7.6.1 in \cite{Dix} to the pair $(\ll, \ll\cap\pp)$. In particular, if $v \in M$, then $U(\ll) \cdot v$ has finite length as an $\ll-$module.
\end{proof}

\begin{cor}\label{corr:1}
 $M \in \mathcal{M}(\gg, Z_{U(\ll)})$.
\end{cor}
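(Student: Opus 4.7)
The plan is to read off the corollary immediately from the two preceding results, since the framework has already been set up so that the corollary is essentially a one-line consequence.

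First I would invoke Proposition \ref{prop:1}, which asserts that under the standing hypothesis ($M \in \mathcal{M}(\gg,\pp)$, with $\hh \subseteq \pp$ and $\pp \cap \ll$ a parabolic subalgebra of $\ll$), the module $M$ is locally Artinian over $\ll$. This is the substantive step, but it has already been carried out in the preceding proposition via Dixmier's Proposition 7.6.1.

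Next I would apply Lemma \ref{lem:2}, which states precisely that any $\gg$-module that is locally Artinian over $\ll$ belongs to $\mathcal{M}(\gg, Z_{U(\ll)})$. Composing these two implications yields $M \in \mathcal{M}(\gg, Z_{U(\ll)})$, which is the content of the corollary.

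Since everything needed has been proven, there is essentially no obstacle: the corollary is a direct composition of Proposition \ref{prop:1} and Lemma \ref{lem:2}. If I wanted to spell out the underlying mechanism without citing Lemma \ref{lem:2}, I would observe that for any $v \in M$, the finite-length $\ll$-module $U(\ll)\cdot v$ has a finite composition series whose simple subquotients each admit a $Z_{U(\ll)}$-character (by Dixmier's Proposition 2.6.8, already cited in the conventions), so $Z_{U(\ll)}$ acts on $U(\ll)\cdot v$ through a finite-dimensional quotient; hence $Z_{U(\ll)}\cdot v$ is finite-dimensional, which is exactly the local finiteness of $M$ over $Z_{U(\ll)}$.
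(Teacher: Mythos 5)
Your proposal is exactly the intended argument: Corollary \ref{corr:1} is stated without proof in the paper precisely because it is the immediate composition of Proposition \ref{prop:1} (local Artinian-ness over $\ll$) with Lemma \ref{lem:2} (locally Artinian over $\ll$ implies membership in $\mathcal{M}(\gg,Z_{U(\ll)})$). Your optional unpacking of Lemma \ref{lem:2} via composition series and Dixmier's Proposition 2.6.8 matches the mechanism the paper already sketched in the proof of that lemma.
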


\begin{lem}\label{lemma:4}
  $\sigma(\ll,M) \subseteq \lbrace\theta_{(\eta|_{\hh_{\ll}})+\rho_{\ll}}^{\ll}\;|\;\eta \in \supp_{\hh}M\rbrace$. 
\end{lem}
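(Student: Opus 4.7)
The plan is to extract the central character data for $\ll$ from the $\hh$-weight data on $M$ by passing through a generalized Verma module for $\ll$.

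First, fix $\theta \in \sigma(\ll, M)$ and pick a nonzero $v \in P(\ll,\theta)(M)$. Since $M \in \mathcal{M}(\gg,\pp)$, there is a finite-dimensional $\pp$-submodule $W \subseteq M$ with $v \in W$. Because $Z_{U(\ll)}$ commutes with $U(\ll)$, the cyclic subspace $U(\ll)\cdot v$ lies in the generalized $\theta$-eigenspace $P(\ll,\theta)(M)$; in particular $\theta$ is a central character of $\ll$ in $U(\ll)\cdot W$ (noting that $U(\ll)\cdot W \in \mathcal{M}(\gg, Z_{U(\ll)})$ by \refprop{prop:1} and \refcor{corr:1}, and applying \refle{lem:1}). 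Thus it suffices to bound $\sigma(\ll,\,U(\ll)\cdot W)$.

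The key step is a generalized Verma module argument. Fix a Borel subalgebra $\bb_\ll$ of $\ll$ with $\hh_\ll \subseteq \bb_\ll \subseteq \pp\cap\ll$. There is a canonical $\ll$-module surjection $U(\ll)\otimes_{U(\pp\cap\ll)}W \twoheadrightarrow U(\ll)\cdot W$. Filter the finite-dimensional $\pp\cap\ll$-module $W$ by $\bb_\ll$-submodules whose successive quotients are one-dimensional $\hh_\ll$-weight spaces $\field{C}_\mu$ (with $\mu \in \supp_{\hh_\ll}W$), and induce up along $U(\ll)\otimes_{U(\pp\cap\ll)}(-)$; this produces a filtration of the generalized Verma module whose subquotients are ordinary Verma modules $U(\ll)\otimes_{U(\bb_\ll)}\field{C}_\mu$. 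Each such Verma has $Z_{U(\ll)}$-character $\theta^\ll_{\mu+\rho_\ll}$, so
$$\sigma(\ll,\,U(\ll)\cdot W)\;\subseteq\;\lbrace\theta^\ll_{\mu+\rho_\ll}\;|\;\mu\in\supp_{\hh_\ll}W\rbrace.$$

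Finally, since $W$ is $\hh$-semisimple and $\hh_\ll = \hh\cap\ll$, restriction gives
$$\supp_{\hh_\ll}W \;=\; \lbrace\eta|_{\hh_\ll}\,:\,\eta\in\supp_\hh W\rbrace \;\subseteq\; \lbrace\eta|_{\hh_\ll}\,:\,\eta\in\supp_\hh M\rbrace,$$
and combining with the preceding display yields the claim. None of the steps is deep, but the point that requires most care is matching the paper's normalization of the Harish-Chandra parametrization (in which $\theta^\ll_{\rho_\ll}$ is the trivial character of $\ll$) against the standard ``highest-weight-plus-$\rho_\ll$'' formula for the central character of a Verma module; Weyl invariance of the Harish-Chandra image guarantees that this identification is independent of the particular choice of $\bb_\ll$ inside $\pp\cap\ll$.
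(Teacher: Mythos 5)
Your proof is correct and in essence the same as the paper's: the paper simply observes that the simple $\ll$-subquotients of $M$ are $(\ll,\ll\cap\pp)$-modules, hence highest weight modules, and invokes the Harish-Chandra relation between highest weight and central character; you spell out the same idea via a Verma filtration and restriction of weights. One small imprecision: a $\bb_\ll$-filtration of $W$ cannot be induced along $U(\ll)\otimes_{U(\pp\cap\ll)}(-)$, since its terms need not be $\pp\cap\ll$-submodules; instead work directly with the surjection $U(\ll)\otimes_{U(\bb_\ll)}W \twoheadrightarrow U(\ll)\cdot W$ (which already exists since $W$ is a $\bb_\ll$-submodule of $M$) and use exactness of $U(\ll)\otimes_{U(\bb_\ll)}(-)$ to obtain the Verma filtration, after which the bound on $\sigma(\ll,U(\ll)\cdot W)$ follows as you state.
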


\begin{proof}
The simple $\ll-$subquotients of $M$ are $(\ll,\ll\cap\pp)-$modules, and our claim follows the well-known relationship between the highest weight of a highest weight module and its central character.
\end{proof}
\vspace{0.5cm}

Let $N$ be a $\gg-$module, and let $\gg[N]$ be the set of elements $x \in \gg$ that act locally finitely in $N$. Then $\gg[N]$ is a Lie subalgebra of $\gg$, the \textit{Fernando-Kac subalgebra associated to} $N$. The fact has been proved independently by V. Kac in \cite{K} and by S. Fernando in \cite{F}.

\begin{theo}\label{th:1}
 Let $M_1$ be a non-zero subquotient of $M$. Assume that $\eta|_{\hh_{\ll}}$ is non-integral relative to $\ll$ for all $\eta \in \supp_{\hh}M$. Then $\ll \nsubseteq \gg[M_1]$.
\end{theo}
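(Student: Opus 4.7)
The plan is to argue by contradiction: assume $\ll\subseteq\gg[M_1]$ and derive that some weight $\eta|_{\hh_\ll}$ is $\ll$-integral, contradicting the hypothesis. First, since $M_1$ is a subquotient of $M\in\mathcal{M}(\gg,\pp)$, it also lies in $\mathcal{M}(\gg,\pp)$, so by \refprop{prop:1} and \refcor{corr:1} it is locally Artinian over $\ll$ and belongs to $\mathcal{M}(\gg,Z_{U(\ll)})$. In particular $M_1$ contains a non-zero cyclic $\ll$-submodule of finite $\ll$-length, hence admits a simple $\ll$-subquotient $S$. Moreover the condition $\ll\subseteq\gg[M_1]$ passes to any subquotient (if $x$ acts locally finitely on $M_1$, it does so on any subquotient), so $\ll\subseteq\gg[S]$.

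Next I claim that such a simple $\ll$-module $S$ must be finite-dimensional. Since the toral subalgebra $\hh_\ll\subseteq\ll\subseteq\gg[S]$ consists of commuting semisimple elements that act locally finitely, $S$ acquires an $\hh_\ll$-weight decomposition. For each root vector $e_\alpha$ of $\ll_{ss}$, the orbit $\{e_\alpha^k v\}$ on a weight vector $v$ lies in pairwise distinct $\hh_\ll$-weight spaces and must still be finite, so $e_\alpha$ acts locally nilpotently on $S$. Thus $S$ is integrable over $\ll_{ss}$: by the standard argument (following the negative root directions to an extremal weight) one finds a highest weight vector in $S$, whose weight $\mu$ is forced to be $\ll$-integral and $\bb_\kk$-dominant by the $\sl(2)$-triples of $\ll_{ss}$. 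Since $S$ is simple, it coincides with the finite-dimensional simple $\ll$-module of highest weight $\mu$ (tensored with a scalar character of $Z(\ll)$, which acts by a character on $S$ by Dixmier's lemma).

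Consequently the central character of $\ll$ on $S$ is $\theta_{\mu+\rho_\ll}^{\ll}$ with $\mu$ an $\ll$-integral weight. By \refle{lemma:4} applied to $M_1$, this character equals $\theta_{(\eta|_{\hh_\ll})+\rho_\ll}^{\ll}$ for some $\eta\in\supp_{\hh}M_1\subseteq\supp_{\hh}M$. The Harish-Chandra description of the kernel of the Harish-Chandra homomorphism for $\ll$ then yields $w\in W_\ll$ with $w(\mu+\rho_\ll)=\eta|_{\hh_\ll}+\rho_\ll$. Since $\mu$ and $\rho_\ll$ are $\ll$-integral and the Weyl group $W_\ll$ preserves the $\ll$-integral lattice, $\eta|_{\hh_\ll}$ is $\ll$-integral, contradicting the hypothesis.

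The main obstacle I anticipate is the middle paragraph: justifying that a simple module over the finite-dimensional reductive Lie algebra $\ll$ on which every element of $\ll$ acts locally finitely is automatically finite-dimensional. The argument amounts to upgrading pointwise local finiteness to integrability and then to the existence of a highest weight vector, which is routine for $\ll_{ss}$ but requires some care about the center $Z(\ll)$ and the compatibility of the simple $\ll$-module structure with the decomposition $\ll=\ll_{ss}\oplus Z(\ll)$; everything else in the proof is essentially bookkeeping with central characters via \refle{lemma:4} and the Harish-Chandra homomorphism.
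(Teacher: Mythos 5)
Your proof is correct and rests on the same foundation as the paper's argument: Lemma \ref{lemma:4} forces every central character of $\ll$ in $M_1$ to be of the form $\theta^{\ll}_{(\eta|_{\hh_\ll})+\rho_\ll}$ with $\eta|_{\hh_\ll}$ non-integral, and such characters cannot arise from finite-dimensional $\ll$-modules, which is incompatible with local finiteness of the $\ll$-action. The paper phrases this directly---non-integral central characters preclude any non-zero finite-dimensional $\ll$-submodule of $M_1$, so $\ll\nsubseteq\gg[M_1]$---while you argue by contradiction, extract a simple $\ll$-subquotient $S$, spell out the integrability argument showing $S$ is a finite-dimensional highest weight module, and then run the Harish-Chandra/Weyl group bookkeeping explicitly. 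The extra work in your middle paragraph can be short-circuited: once $\ll\subseteq\gg[M_1]$ and $\ll$ is finite-dimensional, $M_1$ is locally $\ll$-finite, so $U(\ll)\cdot v$ is already a non-zero finite-dimensional $\ll$-submodule for any $v\neq 0$, and one can then invoke the integrality of central characters of finite-dimensional simple $\ll$-modules directly, exactly as the paper does.
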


\begin{proof}
 By Lemma \ref{lemma:4}, no central character of $\ll$ in $M_1$ is $\ll-$integral. Therefore, no non-zero $\ll-$submodule of $M_1$ is finite dimensional. But $M_1 \neq 0$. Hence, $\ll \nsubseteq \gg[M_1]$.
\end{proof}

\vspace{0.5cm}

In agreement with \cite{PZ2}, we define a $\gg$-module $M$ to be a $(\gg,\kk)$-\textit{module} if $M$ is isomorphic as a $\kk$-module to a direct sum of isotypic components of $\kk$-types. If $M$ is a $(\gg,\kk)$-module, we write $M[\mu]$ for the $V(\mu)$-isotypic component of $M$, and we say that $V(\mu)$ is a $\kk$-\textit{type of} $M$ if $M[\mu]\neq 0$. We say that a $(\gg,\kk)$-module $M$ is \textit{of finite type} if $\dim M[\mu]\neq\infty$ for every $\kk$-type $V(\mu)$ of $M$. Sometimes, we also refer to $(\gg,\kk)$-modules of finite type as \textit{generalized Harish-Chandra modules}.

Note that for any $(\gg,\kk)-$module of finite type $M$ and any $\kk-$type $V(\sigma)$ of $M$, the finite-dimensional $\kk-$module $M[\sigma]$ is a $\tilde{\kk}-$module. In particular, $M$ is a $(\gg,\tilde{\kk})-$module of finite type. We will write $M \langle\delta\rangle$ for the $\tilde{\kk}-$isotypic components of $M$ where $\delta \in (\hh \cap \tilde{\kk})^*$.

If $M$ is a module of finite length, a $\kk$-type $V(\mu)$ of $M$ is \textit{minimal} if the function $\mu '\mapsto \parallel\mathrm{Re}\mu '+2\rho \parallel^2 $ defined on the set $\{\mu '\in\tt^*\; |\; M[\mu ']\neq 0\}$ has a minimum at $\mu$. Any non-zero $(\gg,\kk)$-module $M$ of finite length has a minimal $\kk$-type.
 
\subsection{Generalities on the Zuckerman functor}\label{sect:1.4}

Recall that the \textit{functor of $\kk$-finite vectors} $\Gamma_{\gg,\kk}^{\gg,\tt}$ is a well-defined left-exact functor on the category of $(\gg,\tt)$-modules with values in $(\gg,\kk)$-modules, $$\Gamma_{\gg,\kk}^{\gg,\tt}(M):=\sum_{M'\subset M, \dim M'=1, \dim U(\kk)\cdot M'<\infty}M'.$$ By $R^\cdot\Gamma_{\gg,\kk}^{\gg,\tt}:=\bigoplus_{i\geq 0}R^i\Gamma_{\gg,\kk}^{\gg,\tt}$ we denote as usual the total right derived functor of $\Gamma_{\gg,\kk}^{\gg,\tt}$, see \cite{Z} and the references therein.

\begin{prop}\label{prop:2}
If $\ll$ is any reductive subalgebra of $\gg$ containing $\kk$, then there is a natural isomorphism of $\ll-$modules 
\begin{equation}\label{iso_1}
R^{\cdot}\Gamma_{\gg,\kk}^{\gg,\tt}\,\,(N) \cong R^{\cdot}\Gamma_{\ll,\kk}^{\ll,\tt}\,\,(N).
\end{equation} 
\end{prop}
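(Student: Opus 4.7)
The plan is to reduce to the underived case via injective resolutions and a standard adjunction argument. At the underived level the identification is transparent: the prescription defining $\Gamma_{\gg,\kk}^{\gg,\tt}(N)$ as the sum of one-dimensional subspaces $M' \subset N$ with $\dim U(\kk)\cdot M' < \infty$ involves only the $\kk$-action, so $\Gamma_{\gg,\kk}^{\gg,\tt}(N)$ and $\Gamma_{\ll,\kk}^{\ll,\tt}(N)$ coincide as subspaces of $N$, namely the $\kk$-locally finite part. Since the $\ll$-action on either side is the restriction of the $\gg$-action on $N$, the equality is $\ll$-equivariant.

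To extend to the right derived functors, I would choose an injective resolution $0 \to N \to I^{\cdot}$ in the category of $(\gg,\tt)$-modules and apply both Zuckerman functors. By the previous step, the complexes $\Gamma_{\gg,\kk}^{\gg,\tt}(I^{\cdot})$ and $\Gamma_{\ll,\kk}^{\ll,\tt}(I^{\cdot})$ coincide as complexes of $\ll$-modules and hence have canonically isomorphic cohomology. The left-hand side computes $R^{\cdot}\Gamma_{\gg,\kk}^{\gg,\tt}(N)$ by definition. The right-hand side computes $R^{\cdot}\Gamma_{\ll,\kk}^{\ll,\tt}(N)$ provided each $I^j$ is $\Gamma_{\ll,\kk}^{\ll,\tt}$-acyclic when regarded as an $(\ll,\tt)$-module.

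To establish this acyclicity it suffices to show that the restriction functor from the category of $(\gg,\tt)$-modules to the category of $(\ll,\tt)$-modules preserves injectives. I would deduce this by exhibiting an exact left adjoint, namely induction $M \mapsto U(\gg) \otimes_{U(\ll)} M$. This functor is exact because $U(\gg)$ is free as a right $U(\ll)$-module by PBW, and it lands in the category of $(\gg,\tt)$-modules because $\tt \subseteq \ll$ acts semisimply on $U(\gg)$ via the adjoint representation and on $M$ by hypothesis, so the tensor product is $\tt$-weight semisimple. Since a right adjoint of an exact functor preserves injectives, each $I^j$ is an injective $(\ll,\tt)$-module and hence $\Gamma_{\ll,\kk}^{\ll,\tt}$-acyclic.

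The main obstacle is the careful verification of these adjunction and PBW facts within the pair categories and the checking that the category of $(\gg,\tt)$-modules has enough injectives; the needed technicalities are standard in the cohomological induction literature (see \cite{KV}). Once they are in place, the three steps above combine to give a natural $\ll$-equivariant isomorphism $R^{\cdot}\Gamma_{\gg,\kk}^{\gg,\tt}(N) \cong R^{\cdot}\Gamma_{\ll,\kk}^{\ll,\tt}(N)$.
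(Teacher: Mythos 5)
Your argument is correct. The paper itself gives no proof here; it simply cites Proposition~2.5 of \cite{PZ4}, so a direct comparison of methods is not possible. That said, your three-step reduction is the natural homological route and is complete: at the underived level the two functors agree tautologically as $\ll$-modules because the $\kk$-finite part of $N$ depends only on the $\kk$-action and the $\ll$-action on both sides is restriction of the $\gg$-action; to pass to derived functors you compute both on a single injective resolution $I^{\cdot}$ of $N$ in $(\gg,\tt)$-mod; and the key acyclicity of each $I^{j}$ for $\Gamma_{\ll,\kk}^{\ll,\tt}$ follows because restriction $(\gg,\tt)\text{-mod}\to(\ll,\tt)\text{-mod}$ is right adjoint to the exact functor $U(\gg)\otimes_{U(\ll)}(-)$, which lands in $(\gg,\tt)$-mod by the PBW decomposition $U(\gg)\cong U(\qq)\otimes U(\ll)$ for a $\tt$-stable complement $\qq$. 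One small presentational point: your justification that $U(\gg)\otimes_{U(\ll)}M$ is a $\tt$-weight module is stated a bit loosely (one cannot simply say ``the tensor product is $\tt$-weight semisimple'' over $U(\ll)$); invoking the PBW isomorphism $U(\gg)\otimes_{U(\ll)}M\cong U(\qq)\otimes M$ as $\tt$-modules makes the semisimplicity of the $\tt$-action transparent and should be made explicit.
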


\begin{proof}
 See Proposition 2.5 in \cite{PZ4}.
\end{proof}

\begin{prop}\label{prop:3}
 If $\tilde{N} \in \mathcal{M}(\ll,\tt,Z_{U(\ll)}):=\mathcal{M}(\ll,Z_{U(\ll)}) \cap \mathcal{M}(\ll,\tt)$, then $$R^{\cdot}\Gamma_{\ll,\kk}^{\ll,\tt}\,\,(\tilde{N}) \in \mathcal{M}(\ll,\kk,Z_{U(\ll)}).$$

 Moreover, $$\sigma(\ll,R^{\cdot}\Gamma_{\ll,\kk}^{\ll,\tt}\,\,(\tilde{N})) \subset \sigma(\ll,\tilde{N}).$$
\end{prop}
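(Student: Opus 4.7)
The plan is to lift the central decomposition of $\tilde{N}$ given by Lemma \ref{lem:1} through the derived Zuckerman functor. By Lemma \ref{lem:1},
$$\tilde{N}\;=\;\bigoplus_{\theta\in\sigma(\ll,\tilde{N})}P(\ll,\theta)(\tilde{N}),$$
and both assertions of the proposition will drop out from two ingredients: (i) $R^{\cdot}\Gamma_{\ll,\kk}^{\ll,\tt}$ commutes with the direct sum above and with the increasing unions that constitute each summand, and (ii) on every summand $R^{\cdot}\Gamma_{\ll,\kk}^{\ll,\tt}(P(\ll,\theta)(\tilde{N}))$ the center $Z_{U(\ll)}$ acts with generalized character $\theta$. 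Membership in $\mathcal{M}(\ll,\kk)$ is built into the definition of the Zuckerman functor, so the real content is tracking the $Z_{U(\ll)}$-eigencharacters.

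Ingredient (ii) is pure functoriality. Any $z\in Z_{U(\ll)}$ commutes with $U(\ll)$, so multiplication by $z$ on $\tilde{N}$ is an $\ll$-module endomorphism; $R^{i}\Gamma_{\ll,\kk}^{\ll,\tt}$ sends it to an endomorphism of the cohomology, and this induced endomorphism coincides with the canonical action of $z$ coming from the $\ll$-module structure on the output. Writing $\mm_\theta\subset Z_{U(\ll)}$ for the maximal ideal of $\theta$, one has $P(\ll,\theta)(\tilde{N})=\bigcup_{n}\tilde{N}[\mm_\theta^n]$, where each $\tilde{N}[\mm_\theta^n]$ is an $\ll$-submodule annihilated by $\mm_\theta^n$. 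By functoriality, $\mm_\theta^n$ therefore annihilates $R^{\cdot}\Gamma_{\ll,\kk}^{\ll,\tt}(\tilde{N}[\mm_\theta^n])$, so each such cohomology is locally $Z_{U(\ll)}$-finite with generalized character $\theta$.

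The main effort is in ingredient (i). Right derived functors do not preserve filtered colimits in general, so I would exploit the concrete construction of $R^{\cdot}\Gamma_{\ll,\kk}^{\ll,\tt}$ via a standard relative Koszul/Hochschild complex built from production functors from $\tt$ to $\kk$: the terms of this complex are exact functors that commute with direct sums, so the resulting cohomology commutes with direct sums and with increasing unions of $(\ll,\tt)$-submodules. Combining this with (ii) yields
$$R^{\cdot}\Gamma_{\ll,\kk}^{\ll,\tt}(\tilde{N})\;=\;\bigoplus_{\theta\in\sigma(\ll,\tilde{N})}R^{\cdot}\Gamma_{\ll,\kk}^{\ll,\tt}(P(\ll,\theta)(\tilde{N})),$$
the summand indexed by $\theta$ being locally $Z_{U(\ll)}$-finite with generalized character $\theta$. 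Both the membership $R^{\cdot}\Gamma_{\ll,\kk}^{\ll,\tt}(\tilde{N})\in\mathcal{M}(\ll,\kk,Z_{U(\ll)})$ and the spectrum inclusion $\sigma(\ll,R^{\cdot}\Gamma_{\ll,\kk}^{\ll,\tt}(\tilde{N}))\subseteq\sigma(\ll,\tilde{N})$ then follow. The main obstacle is precisely the verification that the Koszul-type complex computing $R^{\cdot}\Gamma_{\ll,\kk}^{\ll,\tt}$ is a termwise-exact functor compatible with direct sums and filtered colimits; once this is in place, the remainder of the argument is formal.
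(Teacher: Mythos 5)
Your argument is correct. Note first that the paper does not actually prove Proposition \ref{prop:3}: its ``proof'' is a bare citation to Proposition 2.12 and Corollary 2.8 of \cite{Z}, so there is no in-text argument to compare against; what you have written is the standard standalone argument that those cited results encode. Ingredient (ii) is sound: for central $z$, multiplication by $z$ is an $\ll$-module endomorphism of $\tilde N$, and the naturality of $R^\cdot\Gamma_{\ll,\kk}^{\ll,\tt}$ as a functor to $\ll$-modules guarantees that its image under the derived functor coincides with the intrinsic $z$-action on the output, so $\mm_\theta^n$-torsion $(\ll,\tt)$-submodules are sent to $\mm_\theta^n$-torsion $(\ll,\kk)$-modules. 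Ingredient (i) is also fine and is the genuinely nontrivial point: although right derived functors need not preserve colimits, $R^\cdot\Gamma_{\ll,\kk}^{\ll,\tt}$ has bounded cohomological amplitude and is computed by a termwise-exact relative standard (Koszul/Hochschild) complex whose terms --- built from $\kk$-finite vectors inside production from $\tt$ to $\kk$ --- visibly commute with direct sums and with increasing unions of $(\ll,\tt)$-submodules, hence so does the resulting cohomology. Combining these with the decomposition of Lemma \ref{lem:1} gives both the membership $R^\cdot\Gamma_{\ll,\kk}^{\ll,\tt}(\tilde N)\in\mathcal{M}(\ll,\kk,Z_{U(\ll)})$ and the inclusion $\sigma(\ll,R^\cdot\Gamma_{\ll,\kk}^{\ll,\tt}(\tilde N))\subseteq\sigma(\ll,\tilde N)$, exactly as you conclude.
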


\begin{proof}
 See Proposition 2.12 and Corollary 2.8 in \cite{Z}.
\end{proof}

\begin{cor}\label{corr:2}
 If $N \in \mathcal{M}(\gg, \tt, Z_{U(\ll)}):=\mathcal{M}(\gg,Z_{U(\ll)}) \cap \mathcal{M}(\gg,\tt)$, then $$R^{\cdot}\Gamma_{\gg,\kk}^{\gg,\tt}\,\,(N) \in \mathcal{M}(\gg,\kk,Z_{U(\ll)}).$$
 
 Moreover, $$\sigma(\ll,R^{\cdot}\Gamma_{\gg,\kk}^{\gg,\tt}\,\,(N)) \subseteq \sigma(\ll,N).$$
\end{cor}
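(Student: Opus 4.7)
The plan is to combine Propositions \ref{prop:2} and \ref{prop:3} in a direct way. The key observation is that the two conditions defining $\mathcal{M}(\gg,\tt,Z_{U(\ll)})$ both concern only the $\ll$-action on $N$: local finiteness of $\tt \subseteq \ll$ is intrinsic to the restriction $N|_\ll$, and local finiteness over $Z_{U(\ll)}$ depends only on the action of $Z_{U(\ll)} \subseteq U(\ll)$. Hence the restricted module $N|_\ll$ belongs to $\mathcal{M}(\ll,\tt,Z_{U(\ll)})$.

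Having established this, I would apply Proposition \ref{prop:3} to $N|_\ll$, obtaining both
\[R^{\cdot}\Gamma_{\ll,\kk}^{\ll,\tt}(N)\in \mathcal{M}(\ll,\kk,Z_{U(\ll)})\]
and the spectrum containment $\sigma(\ll,R^{\cdot}\Gamma_{\ll,\kk}^{\ll,\tt}(N))\subseteq \sigma(\ll,N)$. Next I would invoke the natural $\ll$-module isomorphism \refeq{iso_1} of Proposition \ref{prop:2} to transport these conclusions to $R^{\cdot}\Gamma_{\gg,\kk}^{\gg,\tt}(N)$. Since the isomorphism is one of $\ll$-modules and $Z_{U(\ll)} \subseteq U(\ll)$, the property of being locally finite over $Z_{U(\ll)}$ is preserved, as is the $Z_{U(\ll)}$-spectrum. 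Combining this with the fact that $R^{\cdot}\Gamma_{\gg,\kk}^{\gg,\tt}(N)$ is, by construction of the Zuckerman functor, a $(\gg,\kk)$-module, yields membership in $\mathcal{M}(\gg,\kk,Z_{U(\ll)})$.

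The spectrum statement then follows directly: under the isomorphism of Proposition \ref{prop:2}, the two derived modules have identical generalized $Z_{U(\ll)}$-eigenspace decompositions, so $\sigma(\ll,R^{\cdot}\Gamma_{\gg,\kk}^{\gg,\tt}(N)) = \sigma(\ll,R^{\cdot}\Gamma_{\ll,\kk}^{\ll,\tt}(N)) \subseteq \sigma(\ll,N)$.

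There is essentially no obstacle, since this is a straightforward reduction to the two preceding propositions; the only point requiring a sentence of care is the remark that the natural isomorphism of Proposition \ref{prop:2} respects the full $\ll$-action and therefore respects both the local $Z_{U(\ll)}$-finiteness and the $Z_{U(\ll)}$-eigenspace decomposition produced by Lemma \ref{lem:1}.
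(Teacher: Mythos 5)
Your argument is correct and is exactly the intended one: the paper's proof is the single line ``Apply Propositions \ref{prop:2} and \ref{prop:3},'' and you have simply spelled out the details — restrict to $\ll$, apply Proposition \ref{prop:3}, transport back along the $\ll$-module isomorphism of Proposition \ref{prop:2}, and note that this isomorphism respects $Z_{U(\ll)}$-local finiteness and the $Z_{U(\ll)}$-spectrum.
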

\begin{proof}
 Apply Propositions \ref{prop:2} and \ref{prop:3}.
\end{proof}

Note that the isomorphism (\ref{iso_1}) enables us to write simply $\Gamma_{\kk,\tt}$ instead of $\Gamma_{\gg,\kk}^{\gg,\tt}$. 

For $\gg \supseteq \ll \supseteq \kk \supseteq \tt$ as above, let $\pp$ be a $\tt-$compatible parabolic subalgebra of $\gg$. It follows immediately that $\ll \cap \pp$ is a $\tt-$compatible parabolic subalgebra of $\ll$. Let $\hh_\ll \subset \ll \cap \pp$ be a Cartan subalgebra of $\ll$ containing $\tt$, and let $\hh \subset \pp$ be a Cartan subalgebra of $\gg$ such that $\hh_\ll = \hh \cap \ll$. We have the following diagram of subalgebras:

\begin{center}
\includegraphics[scale=0.55]{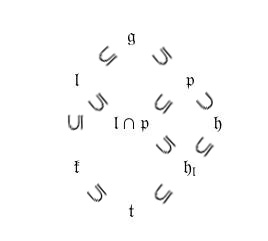} 
\end{center}

In this setup we have the following result.

\begin{theo}\label{th:2}
 Suppose $N \in \mathcal{M}(\gg,\pp) \cap \mathcal{M}(\gg,\tt)$, $M$ is a non-zero subquotient of $R^{\cdot}\Gamma_{\kk,\tt}\,\,(N)$ and $\eta|_{\hh_{\ll}}$ is not $\ll-$integral for all $\eta \in \supp_{\hh}N$. Then $\ll \nsubseteq \gg[M]$.
\end{theo}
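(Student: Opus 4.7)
The plan is to reduce Theorem \ref{th:2} to (essentially) the argument of Theorem \ref{th:1}, using the derived Zuckerman functor as an intermediate stage. Concretely, I will propagate the spectral information on $N$ to $R^\cdot\Gamma_{\kk,\tt}(N)$ via Corollary \ref{corr:2}, then down to the subquotient $M$ by the elementary stability of local finiteness over $Z_{U(\ll)}$, and finally appeal to Lemma \ref{lemma:4} to detect the non-integrality of every $\ll$-central character of $M$.

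More precisely, since $N\in \mathcal{M}(\gg,\pp)$, Corollary \ref{corr:1} gives $N\in \mathcal{M}(\gg, Z_{U(\ll)})$, and combined with the hypothesis $N\in \mathcal{M}(\gg,\tt)$ this places $N$ in $\mathcal{M}(\gg,\tt,Z_{U(\ll)})$. Corollary \ref{corr:2} then yields
\[
R^\cdot\Gamma_{\kk,\tt}(N)\in \mathcal{M}(\gg,\kk,Z_{U(\ll)}),\qquad \sigma\bigl(\ll,R^\cdot\Gamma_{\kk,\tt}(N)\bigr)\subseteq \sigma(\ll,N).
\]
Both membership in $\mathcal{M}(\gg,Z_{U(\ll)})$ and containment of $Z_{U(\ll)}$-spectra are inherited by subquotients (directly from Lemma \ref{lem:1}), so the non-zero subquotient $M$ also satisfies $M\in \mathcal{M}(\gg,Z_{U(\ll)})$ and $\sigma(\ll,M)\subseteq \sigma(\ll,N)$. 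Applying Lemma \ref{lemma:4} to $N$ (the Cartan $\hh\subset\pp$ is chosen so that $\hh_\ll=\hh\cap\ll$ is a Cartan of $\ll$, and Lemma \ref{lemma:3} ensures that the set $\supp_\hh N$ in the hypothesis does not depend on this choice) gives
\[
\sigma(\ll,M)\subseteq \{\theta^\ll_{(\eta|_{\hh_\ll})+\rho_\ll}\mid \eta\in \supp_\hh N\}.
\]

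By the hypothesis that $\eta|_{\hh_\ll}$ is non-integral relative to $\ll$ for each $\eta\in\supp_\hh N$, none of the central characters in $\sigma(\ll,M)$ can be the $Z_{U(\ll)}$-character of any finite-dimensional $\ll$-module, since the composition factors of such a module have $\ll$-integral highest weights. Consequently $M$ admits no non-zero finite-dimensional $\ll$-submodule. The conclusion $\ll\not\subseteq \gg[M]$ now follows exactly as in the proof of Theorem \ref{th:1}: if $\ll$ were contained in $\gg[M]$, then by the Fernando--Kac theorem the reductive subalgebra $\ll$ would act locally finitely on the non-zero $\gg$-module $M$, forcing a non-zero finite-dimensional $\ll$-submodule and contradicting the previous sentence.

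The real work is already pre-packaged into Corollary \ref{corr:2}, which controls how $R^\cdot\Gamma_{\kk,\tt}$ acts on $Z_{U(\ll)}$-spectra; once this is granted, the rest of the proof is bookkeeping along the template of Theorem \ref{th:1}. The only point that requires care is checking that the hypotheses on $N$ and the diagram of Cartan subalgebras assemble so that Corollary \ref{corr:2} and Lemma \ref{lemma:4} can both be invoked with the same $\hh$; this is precisely the content of the setup preceding the theorem.
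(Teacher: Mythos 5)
Your proof is correct and follows essentially the same route as the paper's own: propagate central-character information from $N$ through $R^\cdot\Gamma_{\kk,\tt}$ to $M$, observe that none of the resulting characters is $\ll$-integral, and conclude that $M$ has no non-zero finite-dimensional $\ll$-submodule. The paper's proof is terser (it cites Corollary 2.8 of \cite{Z} directly for the spectral containment, whereas you route through the paper's own Corollary \ref{corr:2}, which is just a repackaging of the same fact), but the logical skeleton is identical.
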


\begin{proof}
 Every central character of $\ll$ in $M$ is a central character of $\ll$ in $N$. This follows from Corollary 2.8 in \cite{Z}. By our assumptions, no central character of $\ll$ in $N$ is $\ll-$integral. Hence, no $\ll-$submodule of $M$ is finite dimensional, and thus $\ll \nsubseteq \gg[M]$.
\end{proof}

\section{The fundamental series: main results}\label{sect:2}

We now introduce one of our main objects of study: the fundamental series of generalized Harish-Chandra modules.

We start by fixing some more notation: if $\qq$ is a subalgebra of $\gg$ and $J$ is a $\qq$-module, we set $\ind^{\gg}_\qq J:=U(\gg)\otimes_{U(\qq)}J$ and $\pro_\qq^\gg J:=\Hom_{U(\qq)}(U(\gg),J)$. For a finite-dimensional $\pp$- or $\bar{\pp}$-module $E$ we set $N_\pp(E):=\Gamma_{\tt,0}(\pro_\pp^\gg(E\otimes\Lambda^{\dim \nn}(\nn)))$, $N_{\bar{\pp}}(E^*):=\Gamma_{\tt,0}(\pro_{\bar{\pp}}^\gg(E^*\otimes\Lambda^{\dim \nn}(\nn^*)))$. One can show that both $N_\pp(E)$ and $N_{\bar{\pp}}(E^*)$ have simple socles as long as $E$ itself is simple.

The \textit{fundamental series} of $(\gg,\kk)$-modules of finite type $F^\cdot(\kk,\pp,E)$ is defined as follows. Let $\pp=\mm\crplus\nn$ be a minimal compatible parabolic subalgebra, $E$ be a simple finite dimensional $\pp$-module on which $\tt$ acts via the weight $\omega\in\tt^*$, and $\mu:=\omega+2\rho_\nn^\perp$ where $\rho_\nn^\perp :=\rho_\nn -\rho$. Set $$F^\cdot(\kk,\pp,E):=R^\cdot\Gamma_{\kk,\tt}(N_\pp(E)).$$ 

In the rest of the paper we assume that $\hh \cap \tilde{\kk}$ is a Cartan subalgebra of $\tilde{\kk}$.

\begin{theo}\label{th:3}
    \begin{enumerate}[a)]
		\item $F^\cdot(\kk,\pp,E)$ is a $(\gg,\kk)$-module of finite type and $Z_{U(\gg)}$ acts on $F^\cdot(\pp,E)$ via the $Z_{U(\gg)}$-character $\theta_{\nu+\tilde{\rho}}$ where $\tilde{\rho}:=\rho_{\ch_\hh\bb}$ for some Borel subalgebra $\bb$ of $\gg$ with $\bb\supset\hh,\;\bb\subset\pp$ and $\bb\cap\kk=\bb_\kk$, and where $\nu$ is the $\bb$-highest weight of $E$ (note that $\nu|_\tt=\omega$).
		\item $F^\cdot(\kk,\pp,E)$ is a $(\gg,\kk)$-module of finite length.
		\item There is a canonical isomorphism
		\begin{equation}\label{eq1}
		    F^\cdot(\kk,\pp,E)\simeq R^\cdot\Gamma_{\tilde{\kk},\tilde{\kk}\cap\mm}(\Gamma_{\tilde{\kk}\cap\mm,0}(\pro_\pp^\gg (E\otimes \Lambda^{\dim \nn}(\nn)))).
		\end{equation}
    \end{enumerate}
\end{theo}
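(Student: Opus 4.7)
The plan is to prove the three parts in the given order, with (a) and (b) reducing to standard properties of the coinduced module $N_\pp(E)$ and the derived Zuckerman functor, and (c) relying on a transitivity argument for the latter applied within $\tilde{\kk}$ along the chain $\tt\subseteq\tilde{\kk}\cap\mm\subseteq\tilde{\kk}$.

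For (a), set $I:=\pro_\pp^\gg(E\otimes\Lambda^{\dim\nn}(\nn))$. Since $E$ is a simple $\pp$-module with $\bb$-highest weight $\nu$ and the twist by $\Lambda^{\dim\nn}(\nn)$ shifts the $\hh$-weight by $2\tilde{\rho}_\nn$, a standard Harish-Chandra type calculation yields that $Z_{U(\gg)}$ acts on $I$, and hence on $N_\pp(E)=\Gamma_{\tt,0}(I)$, through $\theta_{\nu+\tilde{\rho}}$. Corollary \ref{corr:2} with $\ll=\gg$ then transports this character to $F^\cdot(\kk,\pp,E)=R^\cdot\Gamma_{\kk,\tt}(N_\pp(E))$. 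The $(\gg,\kk)$-module structure is automatic from the definition of $\Gamma_{\kk,\tt}$, and the finite-type property I would establish via a Casselman--Osborne type identification of each isotypic component $F^\cdot(\kk,\pp,E)[\mu]$ with a subquotient of a finite-dimensional $\nn$-cohomology space of $E$ with appropriate weight shift. For (b), $N_\pp(E)$ has finite length in the thickened parabolic category $\mathcal{M}(\gg,\pp)\cap\mathcal{M}(\gg,\tt)$, being the $\tt$-finite part of a module coinduced from a simple finite-dimensional $\pp$-module; finite length of $F^\cdot(\kk,\pp,E)$ then follows from the standard preservation of finite length under the derived Zuckerman functor.

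For (c), my first step is to pass from $\kk$ to $\tilde{\kk}$: by Proposition \ref{prop:2} applied with $\ll=\tilde{\kk}$, the derived functor $R^\cdot\Gamma_{\kk,\tt}$ can be computed inside $\tilde{\kk}$, and on modules of finite type $\kk$-local finiteness and $\tilde{\kk}$-local finiteness coincide (since $C(\kk)$ centralizes $\kk$ and hence stabilizes each finite-dimensional $\kk$-isotypic component). This promotes $F^\cdot(\kk,\pp,E)$ to a $(\gg,\tilde{\kk})$-module identified with $R^\cdot\Gamma_{\tilde{\kk},\tt}(N_\pp(E))$. I would then apply the Grothendieck spectral sequence for the factorization $\Gamma_{\tilde{\kk},\tt}=\Gamma_{\tilde{\kk},\tilde{\kk}\cap\mm}\circ\Gamma_{\tilde{\kk}\cap\mm,\tt}$ to $\Gamma_{\tt,0}(I)$ and show that it degenerates. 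The key observation is that $\tilde{\kk}\cap\mm\subseteq\mm=C(\tt)$ commutes with $\tt$, hence preserves each (finite-dimensional) $\tt$-weight space of $\Gamma_{\tt,0}(I)$, so $\tilde{\kk}\cap\mm$ acts locally finitely there. Thus $\Gamma_{\tt,0}(I)$ is already a $(\gg,\tilde{\kk}\cap\mm)$-module, on which the inner functor $\Gamma_{\tilde{\kk}\cap\mm,\tt}$ is the identity, and the spectral sequence collapses. Combined with the identity $\Gamma_{\tilde{\kk}\cap\mm,\tt}(\Gamma_{\tt,0}(I))=\Gamma_{\tilde{\kk}\cap\mm,0}(I)$, this yields the isomorphism \refeq{eq1}.

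The principal obstacle, as I anticipate it, lies in the finite-type and finite-length assertions in (a) and (b): the Casselman--Osborne type identification of the isotypic components and the Zuckerman finite-length preservation both require a careful homological treatment of the thickened parabolic category $\mathcal{M}(\gg,\pp)\cap\mathcal{M}(\gg,\tt)$, and together they pin down the structure of the fundamental series. The transitivity argument in (c), although central to the isomorphism \refeq{eq1}, is comparatively robust once one notices that the intermediate subalgebra $\tilde{\kk}\cap\mm$ commutes with $\tt$, which automatically renders the inner Zuckerman functor exact on the relevant input.
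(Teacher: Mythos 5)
The paper does not give a self-contained proof: it cites Theorem 2\,a) of \cite{PZ2} for part a), Theorem 2.5 of \cite{PZ5} for part b), and the comparison principle (Proposition 2.6) of \cite{PZ4} for part c). Your proposal attempts to sketch actual arguments, which is more ambitious, but the sketch has a real gap at part b) and treats part c) slightly differently from the cited route.

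For part a), your reduction is essentially the standard one: compute the infinitesimal character on $N_\pp(E)$ from the twist by $\Lambda^{\dim\nn}(\nn)$ and push it through the derived Zuckerman functor via Corollary \ref{corr:2}, and control the isotypic components by a Casselman--Osborne/bottom-layer computation. This matches the approach in \cite{PZ2}.

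For part b), however, the argument as written has a genuine gap. You assert that $N_\pp(E)$ ``has finite length in the thickened parabolic category, being the $\tt$-finite part of a module coinduced from a simple finite-dimensional $\pp$-module,'' and then claim finite length is preserved by the derived Zuckerman functor. Neither step is automatic. $N_\pp(E)$ is a $(\gg,\tt)$-module with finite-dimensional $\tt$-weight spaces and a single $Z_{U(\gg)}$-character, but $\tt$ is in general a proper subalgebra of a Cartan of $\gg$, so this is far from category $\mathcal{O}$: finite-dimensional $\tt$-weight spaces plus a fixed central character do not force finite length (there can be continuous families of simple $(\gg,\tt)$-weight modules). Moreover, even granting finite length of $N_\pp(E)$, one still has to show that $R^\cdot\Gamma_{\kk,\tt}$ carries each simple subquotient to a finite-length $(\gg,\kk)$-module, which is again not a formal consequence. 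In fact, the finite-length statement is exactly the nontrivial content of Theorem 2.5 in \cite{PZ5}, and it cannot be dismissed as a ``standard preservation'' property.

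For part c), your key observation --- that $\tilde{\kk}\cap\mm\subseteq C(\tt)$ preserves the finite-dimensional $\tt$-weight spaces of $\Gamma_{\tt,0}(I)$, so $\Gamma_{\tt,0}(I)$ is already a $(\gg,\tilde{\kk}\cap\mm)$-module and $\Gamma_{\tilde{\kk}\cap\mm,\tt}$ acts as the identity and satisfies $\Gamma_{\tilde{\kk}\cap\mm,\tt}(\Gamma_{\tt,0}(I))=\Gamma_{\tilde{\kk}\cap\mm,0}(I)$ --- is correct and is the heart of the matter. What remains to be justified is the collapse of the Grothendieck spectral sequence, i.e. the vanishing of $R^{q}\Gamma_{\tilde{\kk}\cap\mm,\tt}$ for $q>0$ on the relevant inputs; this is not simply ``exactness of the inner functor'' but a statement about acyclicity of $(\gg,\tilde{\kk}\cap\mm)$-modules for $\Gamma_{\tilde{\kk}\cap\mm,\tt}$. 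The paper packages exactly this step as the comparison principle of \cite{PZ4}, Proposition 2.6, and you should either invoke that result or supply the acyclicity argument; as stated, the spectral-sequence step is asserted rather than proved.
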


\begin{proof}
	Part a) is a recollection of Theorem 2, a) in \cite{PZ2}. Part b) is a recollection of Theorem 2.5 in \cite{PZ5}. Part c) follows from the comparison principle (Proposition $2.6$) in \cite{PZ4}.
\end{proof}

\begin{cor}\label{corr:3}
    $F^\cdot(\kk,\pp,E)$ is a $(\gg,\tilde{\kk})$-module of finite type.
\end{cor}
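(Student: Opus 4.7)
The plan is to deduce this directly from Theorem \ref{th:3} a), together with the general observation flagged in the paragraph preceding that theorem: any $(\gg,\kk)$-module of finite type automatically carries the structure of a $(\gg,\tilde{\kk})$-module of finite type. So I would first invoke Theorem \ref{th:3} a) to conclude that $M := F^\cdot(\kk,\pp,E)$ is a $(\gg,\kk)$-module of finite type, and write the $\kk$-isotypic decomposition $M = \bigoplus_\sigma M[\sigma]$, each $M[\sigma]$ finite dimensional.

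Next, because $C(\kk)$ commutes with $\kk$, it preserves every $M[\sigma]$, so each $M[\sigma]$ is a finite-dimensional module over the reductive subalgebra $\tilde{\kk} = \kk + C(\kk) = \kk_{ss} \oplus C(\kk)$ and therefore decomposes as a finite direct sum of simple $\tilde{\kk}$-modules. To finish one needs to know that the $\tilde{\kk}$-isotypic components of $M$ are themselves finite-dimensional, and for this I would verify that any simple finite-dimensional $\tilde{\kk}$-module, restricted to $\kk$, is $\kk$-isotypic. Using the decomposition $\tilde{\kk}=\kk_{ss}\oplus C(\kk)$ together with the inclusion $Z(\kk) \subseteq Z(C(\kk))$ (which holds because $C(\kk)$ commutes with $\kk\supseteq Z(\kk)$), every simple finite-dimensional $\tilde{\kk}$-module has the form $V\otimes W$ with $V$ a simple $\kk_{ss}$-module and $W$ a simple $C(\kk)$-module on which $Z(\kk)$ acts by a single scalar; restricting to $\kk=\kk_{ss}\oplus Z(\kk)$ then exhibits $V\otimes W$ as a direct sum of $\dim W$ copies of a single $\kk$-type. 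Hence each $\tilde{\kk}$-isotypic component of $M$ sits inside some $M[\sigma]$ and is in particular finite-dimensional, as required.

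There is no serious obstacle: the whole argument is a piece of bookkeeping, and the only mildly delicate point is the check that restriction from $\tilde{\kk}$ to $\kk$ sends simple modules to $\kk$-isotypic modules, which reduces to the observation $Z(\kk)\subseteq Z(C(\kk))$.
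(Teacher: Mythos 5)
Your proposal is correct and follows essentially the same route as the paper: the paper's proof is a one-line citation of the observation in subsection~\ref{sect:1.3} that any $(\gg,\kk)$-module of finite type is automatically a $(\gg,\tilde{\kk})$-module of finite type, together with Theorem~\ref{th:3}~a). You have simply unpacked that observation, supplying the bookkeeping (that $C(\kk)$ preserves each $M[\sigma]$, and that a simple finite-dimensional $\tilde{\kk}=\kk_{ss}\oplus C(\kk)$-module restricts to a $\kk$-isotypic module because $Z(\kk)\subseteq Z(C(\kk))$) that the paper leaves implicit.
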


\begin{proof}
    As we observed in subsection \ref{sect:1.3}, every $(\gg,\kk)-$module of finite type is a $(\gg,\tilde{\kk})-$module of finite type.
\end{proof}

\begin{cor}\label{corr:4}
    Let $\kk_1$ and $\kk_2$ be two algebraic reductive subalgebras such that $\tilde{\kk_1}=\tilde{\kk_2}$. Suppose that $\pp$ is a parabolic subalgebra which is both $\tt_1-$ and $\tt_2-$compatible and  $\tt_1-$ and $\tt_2-$minimal for some Cartan subalgebras $\tt_1$ of $\kk_1$ and $\tt_2$ of $\kk_2$. Then there exists a canonical isomorphism
	$$F^\cdot(\kk_1,\pp,E)\simeq F(\kk_2,\pp,E).$$
\end{cor}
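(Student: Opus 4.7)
The plan is to reduce everything to Theorem~\ref{th:3}(c), whose right-hand side
$$R^{\cdot}\Gamma_{\tilde{\kk},\tilde{\kk}\cap\mm}\bigl(\Gamma_{\tilde{\kk}\cap\mm,0}(\pro_\pp^\gg(E\otimes\Lambda^{\dim\nn}(\nn)))\bigr)$$
makes no explicit reference to $\kk$ or to the choice of Cartan subalgebra $\tt$ of $\kk$: it depends only on $\tilde{\kk}$, the parabolic $\pp$ with its Levi decomposition $\mm\crplus\nn$, and the simple $\pp$-module $E$. Since we are given $\tilde{\kk_1}=\tilde{\kk_2}$ and $\pp$ is common to both setups, applying (c) twice will yield two canonical isomorphisms with identical right-hand sides, and composition delivers the claim.

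Concretely, the first step is to verify that the hypotheses needed to invoke Theorem~\ref{th:3} are satisfied for both pairs $(\kk_i,\tt_i)$, $i=1,2$. By assumption $\pp$ is $\tt_i$-compatible and $\tt_i$-minimal, so $\mm=C(\tt_1)=C(\tt_2)$ and each $\tt_i$ lies in $Z(\mm)$; in particular $\tt_i$ acts by a single weight on the simple finite-dimensional $\pp$-module $E$, so the fundamental series modules $F^\cdot(\kk_i,\pp,E)$ are well-defined. I would also fix, once and for all, a Cartan subalgebra $\hh\subseteq\mm$ of $\gg$ such that $\hh\cap\tilde{\kk}$ is a Cartan subalgebra of $\tilde{\kk}=\tilde{\kk_1}=\tilde{\kk_2}$, thereby meeting the standing assumption preceding Theorem~\ref{th:3}. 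Note that this $\hh$ need not contain either $\tt_1$ or $\tt_2$, because the right-hand side of (c) does not refer to $\hh$ at all.

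The second step is simply to invoke Theorem~\ref{th:3}(c) for each $i$, giving
$$F^\cdot(\kk_i,\pp,E)\simeq R^{\cdot}\Gamma_{\tilde{\kk_i},\tilde{\kk_i}\cap\mm}\bigl(\Gamma_{\tilde{\kk_i}\cap\mm,0}(\pro_\pp^\gg(E\otimes\Lambda^{\dim\nn}(\nn)))\bigr),\quad i=1,2.$$
Because $\tilde{\kk_1}=\tilde{\kk_2}$ (hence also $\tilde{\kk_1}\cap\mm=\tilde{\kk_2}\cap\mm$) and $\mm,\nn,E$ are the same in both cases, the right-hand sides coincide verbatim. Composing the isomorphism for $i=1$ with the inverse of the isomorphism for $i=2$ yields the desired canonical isomorphism $F^\cdot(\kk_1,\pp,E)\simeq F^\cdot(\kk_2,\pp,E)$.

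The argument is essentially bookkeeping around Theorem~\ref{th:3}(c); no substantive obstacle is expected. The only mildly delicate point is ensuring consistency of the ambient Cartan subalgebra across the two setups, which is handled by the single choice of $\hh$ above. The corollary is in this sense an immediate manifestation of the principle that part (c) of Theorem~\ref{th:3} gives a $(\kk,\tt)$-free description of the fundamental series.
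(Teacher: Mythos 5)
Your proof is correct and follows essentially the same route as the paper: both reduce the statement to the isomorphism in Theorem \ref{th:3}(c) and observe that its right-hand side depends only on $\tilde{\kk}$, $\pp$, and $E$, not on the choice of $\kk_i$ or $\tt_i$. Your version is more detailed about verifying the hypotheses and the role of the ambient Cartan subalgebra, but the core argument is identical.
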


\begin{proof}
    Consider the isomorphism (\ref{eq1}) for $\kk_1$ and $\kk_2$, and notice that $$R^\cdot\Gamma_{\tilde{\kk},\tilde{\kk}\cap\mm}(\Gamma_{\tilde{\kk}\cap\mm,0}(\pro_\pp^\gg (E\otimes \Lambda^{\dim \nn}(\nn))))$$ depends only on $\tilde{\kk}$ and $\pp$, but not on $\kk_1$ and $\kk_2$.
\end{proof}

\begin{cor}\label{corr:5}
 Let $M$ be any non-zero subquotient of $F^{\cdot}(\kk,\pp,E)$. If the $\bb-$highest weight $\nu \in \hh^*$ of $E$ is non-integral after restriction to $\hh \cap \ll$ for any reductive subalgebra $\ll$ of $\gg$ such that $\ll \supset \tilde{\kk}$, then $\tilde{\kk}$ is a maximal reductive subalgebra of $\gg[M]$.
\end{cor}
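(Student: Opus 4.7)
The plan is to combine Corollary \ref{corr:3} with Theorem \ref{th:2} applied to $N = N_\pp(E)$.

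First I would observe that, by Corollary \ref{corr:3}, $F^\cdot(\kk,\pp,E)$ is a $(\gg,\tilde{\kk})-$module of finite type, and this property descends to any subquotient $M$; hence $\tilde{\kk} \subseteq \gg[M]$.

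For maximality, I would argue by contradiction: suppose $\ll$ is a reductive subalgebra of $\gg$ with $\tilde{\kk} \subsetneq \ll \subseteq \gg[M]$. Since $\tt \subseteq \ll$, one may choose a Cartan $\hh_\ll$ of $\ll$ containing $\tt$; because $\hh_\ll$ is abelian and contains $\tt$, it lies in $C(\tt) \subseteq \pp$, and one may extend $\hh_\ll$ to a Cartan $\hh$ of $\gg$ satisfying $\hh \subseteq C(\tt) \subseteq \pp$ and $\hh \cap \ll = \hh_\ll$. This places us in the setup of Theorem \ref{th:2}. I would then apply that theorem to $N = N_\pp(E) \in \mathcal{M}(\gg,\pp) \cap \mathcal{M}(\gg,\tt)$ and to the subquotient $M$ of $R^\cdot\Gamma_{\kk,\tt}(N) = F^\cdot(\kk,\pp,E)$.

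The crucial verification is that each $\eta \in \supp_\hh N$ has $\eta|_{\hh_\ll}$ non-$\ll-$integral. A standard PBW computation identifies $\pro_\pp^\gg(E \otimes \Lambda^{\dim \nn}(\nn))$ with $\Hom_\field{C}(U(\bar{\nn}), E \otimes \Lambda^{\dim \nn}(\nn))$, and shows that the $\hh-$weights of this module all differ from $\nu$ by an integer combination of roots of $\gg$ (the shift by $2\tilde{\rho}_\nn$ comes from $\Lambda^{\dim \nn}(\nn)$, a nonpositive combination of roots of $\mm$ comes from the weights of $E$, and a nonnegative combination of roots of $\nn$ comes from the $\bar{\nn}^*$-factors). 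Taking $\tt-$finite vectors does not enlarge this set. On the other hand, since $\ll$ is reductive in $\gg$, the adjoint $\ll-$module $\gg$ is finite-dimensional, so every root of $\gg$ restricts to an $\ll-$integral element of $\hh_\ll^*$. Consequently $\eta|_{\hh_\ll} - \nu|_{\hh_\ll}$ is $\ll-$integral, and the hypothesized non-$\ll-$integrality of $\nu|_{\hh_\ll}$ propagates to each $\eta|_{\hh_\ll}$. Theorem \ref{th:2} then delivers $\ll \nsubseteq \gg[M]$, a contradiction.

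The main obstacle I anticipate is not the weight computation but the compatibility between the fixed parabolic $\pp$ and the varying $\ll$: for each candidate $\ll \supsetneq \tilde{\kk}$ one must produce a single Cartan $\hh$ of $\gg$ satisfying simultaneously $\hh \subseteq \pp$ and $\hh \cap \ll$ equal to a Cartan of $\ll$. The construction above, based on the minimality of $\pp$ (which guarantees $C(\tt) \subseteq \pp$) together with the inclusion $\tt \subseteq \ll$, handles this uniformly. A minor, but formally needed, point is the verification that $N_\pp(E)$ lies in $\mathcal{M}(\gg,\pp) \cap \mathcal{M}(\gg,\tt)$, which is standard for this generalized Verma-type construction after passage to the $\tt-$finite part.
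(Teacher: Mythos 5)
Your proposal is correct and follows the same two-step approach as the paper: Corollary \ref{corr:3} gives $\tilde{\kk} \subseteq \gg[M]$, and Theorem \ref{th:2}, applied to $N = N_\pp(E)$, rules out any strictly larger reductive $\ll$ once the non-integrality hypothesis on $\nu$ is shown to propagate to all of $\supp_{\hh\cap\ll}(N_\pp(E))$. You simply spell out the weight computation (that $\supp_\hh N_\pp(E)$ lies in $\nu$ plus integer combinations of roots, all of which are $\ll$-integral) and the Cartan-compatibility bookkeeping that the paper's three-sentence proof leaves implicit.
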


\begin{proof}
 Corollary \ref{corr:3} shows that $\tilde{\kk} \subseteq \gg[M]$. Theorem \ref{th:2} shows that if $\ll$ is a reductive subalgebra of $\gg$ such that $\ll$ is strictly larger than $\tilde{\kk}$, then $\ll \nsubseteq \gg[M]$. The assumption on $\nu$ implies that all weights in $\supp_{\hh \cap \ll}\,(N_{\pp}(E))$ are non-integral with respect to $\ll$.   
\end{proof}

\paragraph{\textbf{Example}} Here is an example to Corollary \ref{corr:5}. Let $\gg = F_4, \kk \simeq \so(3) \oplus \so(6)$. Then $\kk = \tilde{\kk}$. By inspection, there is only one proper intermediate subalgebra $\ll$, $\tilde{\kk} \subset \ll \subset \gg$, and $\ll$ is isomorphic to $\so(9)$. We have $\tt = \hh$, and $\varepsilon_1,\varepsilon_2,\varepsilon_3,\varepsilon_4$ is a standard basis of $\hh^*$, see \cite{Bou}. A weight $\nu = \sum_{i = 1}^{4}m_i\varepsilon_i$ is $\kk-$integral iff $m_1 \in \ZZ$ or $m_1 \in \ZZ + \frac{1}{2}$, and $(m_2,m_3,m_4) \in \ZZ^3$ or $(m_2,m_3,m_4) \in \ZZ^3+(\frac{1}{2},\frac{1}{2},\frac{1}{2})$. On the other hand, $\nu$ is $\ll-$integral if $(m_1,m_2,m_3,m_4) \in \ZZ^4$ or $(m_1,m_2,m_3,m_4) \in \ZZ^4 + (\frac{1}{2},\frac{1}{2},\frac{1}{2},\frac{1}{2})$. So if the $\bb-$highest weight $\nu$ of $E$ is not $\ll-$integral, Corollary \ref{corr:5} implies that $\gg[M] = \tilde{\kk}$ for any simple subquotient $M$ of $F^\cdot(\kk,\pp,E)$. 

\paragraph{\textbf{Remark}}
\begin{enumerate}[a)]

\item In \cite{PZ1} another method, based on the notion of a small subalgebra introduced by Willenbring and Zuckerman in \cite{WZ}, for computing maximal reductive subalgebras of simple subquotients of $F^\cdot(\kk,\pp,E)$ is suggested. Note that the subalgebra $\kk \simeq \so(3) \oplus \so(6)$ of $F_4$ considered in the above example is not small in $\so(9)$, so the above conclusion that $\gg[M] = \kk$ does not follow from \cite{PZ1}. On the other hand, if one replaces $\kk$ in the example by $\kk'\simeq\so(5)\oplus\so(4)$, then a conclusion similar to that of the example can be reached both by the method of \cite{PZ1} and by Corollary \ref{corr:5}.

\item There are pairs $(\gg,\kk)$ to which neither the method of \cite{PZ1} nor Corollary \ref{corr:5} apply. Such an example is a pair $(\gg = F_4, \kk\simeq\so(8))$. The only proper intermediate subalgebra in this case is $\ll \simeq \so(9)$; however $\so(8)$ is not small in $\so(9)$ and any $\kk=\tilde{\kk}-$integrable weight is also $\ll-$integrable.  
\end{enumerate}

If $M$ is a $(\gg,\kk)$-module of finite type, then $\Gamma_{\kk,0}(M^*)$ is a well-defined $(\gg,\kk)$-module of finite type and $\Gamma_{\kk,0}(\cdot^*)$ is an involution on the category of $(\gg,\kk)$-modules of finite type. We put $\Gamma_{\kk,0}(M^*):=M_\kk^*$. There is an obvious $\gg$-invariant non-degenerate pairing $M\times M_\kk^*\rightarrow\field{C}$.

\vspace{0.5cm}

The following five statements are recollections of the main results of \cite{PZ2} (Theorem 2 through Corollary 4).

\begin{theo}\label{th:4}
    Assume that $V(\mu)$ is a generic $\kk$-type and that $\pp=\pp_{\mu+2\rho}$ ($\mu$ is necessarily $\bb_\kk$-dominant and $\kk$-integral).
	\begin{enumerate}[a)]
	\item $F^i(\kk,\pp,E)=0$ for $i\neq s:=\dim\nn_\kk$ .
	\item There is a $\kk$-module isomorphism $$F^s(\kk,\pp,E)[\mu]\cong\field{C}^{\dim E}\otimes V(\mu),$$ and $V(\mu)$ is the unique minimal $\kk$-type of $F^s(\kk,\pp,E)$.
	\item Let $\bar{F}^s(\kk,\pp,E)$ be the $\gg$-submodule of $F^s(\kk,\pp,E)$ generated by $F^s(\kk,\pp,E)[\mu]$. Then $\bar{F}^s(\kk,\pp,E)$ is simple and $\bar{F}^s(\kk,\pp,E) = \Soc F^s(\kk,\pp,E)$. Moreover, $F^s(\kk,\pp,E)$ is cogenerated by $F^s(\kk,\pp,E)[\mu]$. This implies that $F^s(\kk,\pp,E)^*_\tt$ is generated by $F^s(\kk,\pp,E)^*_\tt[w_m(-\mu)]$, where $w_m\in W_\kk$  is the element of maximal length in the Weyl group $W_\kk$ of $\kk$.
	\item For any non-zero $\gg$-submodule $M$ of $F^s(\kk,\pp,E)$ there is an isomorphism of $\mm$-modules $$H^r(\nn,M)^\omega\cong E.$$
\end{enumerate}
\end{theo}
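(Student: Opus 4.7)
The plan is to reduce everything to classical cohomological induction inside $\tilde{\kk}$ via the comparison isomorphism \refeq{eq1} of Theorem~\ref{th:3} c), which identifies $F^\cdot(\kk,\pp,E)$ with $R^\cdot\Gamma_{\tilde{\kk},\tilde{\kk}\cap\mm}$ applied to the $(\tilde{\kk}\cap\mm)$-module $\tilde{E} := \Gamma_{\tilde{\kk}\cap\mm,0}(\pro_\pp^\gg(E\otimes\Lambda^{\dim\nn}(\nn)))$. Since $\pp = \pp_{\mu+2\rho}$ is $\tt$-minimal, $\mm = C(\tt)$, so $\tilde{\kk}\cap\mm$ is a Levi subalgebra of $\tilde{\kk}$ containing $\tt$, and the standard machinery of the Zuckerman functor -- its Hochschild--Serre spectral sequence, Kostant's $\nn_\kk$-cohomology theorem, and the duality pairing $\Gamma_{\kk,0}(\cdot^*)$ -- all become available within $\tilde{\kk}$.

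For a), vanishing in degrees $i>s=\dim\nn_\kk$ is automatic, because $s$ is the cohomological dimension of $R^\cdot\Gamma_{\tilde{\kk},\tilde{\kk}\cap\mm}$. For $i<s$ I would use a Koszul resolution of $\field{C}$ over $U(\nn_\kk)$ to express $R^i\Gamma_{\tilde{\kk},\tilde{\kk}\cap\mm}(\tilde{E})$ via the $\tt$-weight spaces of $\tilde{E}$ twisted by submultisets of $\ch_\tt\nn$; the second genericity inequality $\langle\Reup\mu+2\rho-\rho_S,\rho_S\rangle>0$ is precisely what is needed to force every such twist outside the $\bb_\kk$-dominant integral locus below the middle degree. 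For b), Kostant's theorem describes the $\bb_\kk$-highest weights occurring in $\tilde{E}$ as Weyl translates of the $\bb$-highest weight $\nu$ of $E$; the contribution of the identity Weyl element, combined with the $\Lambda^{\dim\nn}(\nn)$ twist, produces $\mu$ with multiplicity $\dim E$, and the first genericity inequality together with the norm estimate $\parallel\Reup\mu+2\rho\parallel^2<\parallel\Reup\mu'+2\rho\parallel^2$ for every other highest weight $\mu'$ appearing singles out $V(\mu)$ as the unique minimal $\kk$-type.

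For c), uniqueness of the minimal $\kk$-type forces every nonzero $\gg$-submodule of $F^s(\kk,\pp,E)$ to meet the $\mu$-isotypic component, so $\bar{F}^s(\kk,\pp,E)$ is contained in every nonzero submodule and therefore equals $\Soc F^s(\kk,\pp,E)$; its simplicity reduces to the fact that $F^s(\kk,\pp,E)[\mu]\cong\field{C}^{\dim E}\otimes V(\mu)$ is irreducible as a module coming from the simple $\pp$-module $E$ via the functor above. Cogeneration of $F^s(\kk,\pp,E)$ by its minimal $\kk$-type is then obtained by applying the involution $\Gamma_{\kk,0}(\cdot^*)$, which exchanges socle and head and replaces $\mu$ by $w_m(-\mu)$, yielding the stated generation of $F^s(\kk,\pp,E)^*_\tt$ by its $w_m(-\mu)$-isotypic component.

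Part d) is the most delicate step. A direct computation of top $\nn$-cohomology shows $H^r(\nn,N_\pp(E))^\omega\cong E$ as $\mm$-modules, and because $R^\cdot\Gamma_{\kk,\tt}$ preserves the relevant $\omega$-weight component of top $\nn$-cohomology (via an Euler-characteristic / spectral-sequence comparison), the same isomorphism passes to $F^s(\kk,\pp,E)$. To extend it to an arbitrary nonzero submodule $M\subseteq F^s(\kk,\pp,E)$ one invokes c): $M$ contains $\bar{F}^s(\kk,\pp,E)$ and hence its entire minimal $\kk$-type, which suffices to recover all of $E$ in the $\omega$-weight space of $H^r(\nn,M)$ by Frobenius reciprocity. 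The main obstacle, present throughout a), b), and d), is matching the two genericity bounds with the Kostant multiplicity count precisely enough for the relevant spectral sequences to collapse onto a single Weyl-coset contribution in the middle degree; this coupling of vanishing and multiplicity is the technical heart of \cite{PZ2}.
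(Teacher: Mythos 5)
The paper does not actually prove Theorem~\ref{th:4}; it states explicitly that this result (together with the four statements that follow) is a recollection of Theorem~2 through Corollary~4 of \cite{PZ2}, and gives no argument. So the comparison must be with the original proof in \cite{PZ2}.

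Your overall strategy differs from \cite{PZ2}: you want to funnel everything through the comparison isomorphism of Theorem~\ref{th:3}~c), which is not available in \cite{PZ2} --- it comes from the later comparison principle of \cite{PZ4}. In \cite{PZ2} the proof works directly with $R^\cdot\Gamma_{\kk,\tt}(N_\pp(E))$, the Hochschild--Serre / Grothendieck spectral sequence for $\nn_\kk$-cohomology, the Bott--Kostant computation of $\nn$-cohomology of the generalized Verma module, and duality arguments; the genericity inequalities are used to isolate a single Weyl coset and to kill all degrees except $s$. Your reduction to $\tilde\kk$ is a legitimate alternative entry point and your sketches of a) and b) --- cohomological dimension bound, Koszul/Kostant multiplicity count, the two genericity inequalities controlling vanishing and minimality --- are broadly consistent with the shape of that argument, though too schematic to constitute a proof.

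There is, however, a genuine circularity between your treatments of c) and d). For c) you assert that ``uniqueness of the minimal $\kk$-type forces every nonzero $\gg$-submodule of $F^s(\kk,\pp,E)$ to meet the $\mu$-isotypic component.'' This does not follow: a submodule $M$ has its own minimal $\kk$-type $V(\mu')$ with $\parallel\Reup\mu'+2\rho\parallel^2 \geq \parallel\Reup\mu+2\rho\parallel^2$, and nothing in the uniqueness statement prevents the inequality from being strict. The assertion that $M[\mu]\neq 0$ for all nonzero submodules $M$ is exactly the cogeneration claim you are trying to prove, and in \cite{PZ2} it is deduced \emph{from} part d) --- the cohomological recovery $H^r(\nn,M)^\omega\cong E$ for every nonzero submodule $M$, combined with the bottom-layer map, is what forces $M[\mu]\neq 0$. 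But you then prove d) \emph{by invoking c)}: ``one invokes c): $M$ contains $\bar{F}^s(\kk,\pp,E)$.'' That is a closed loop. The correct logical order is to establish d) independently (this is the technical heart of \cite{PZ2}, relying on the spectral sequence relating $\nn$- and $\nn_\kk$-cohomology, duality for $N_{\bar\pp}(E^*)$, and the genericity bound), and only then derive cogeneration, the socle identification, and simplicity of $\bar{F}^s$. Relatedly, your claim that $\field{C}^{\dim E}\otimes V(\mu)$ is ``irreducible'' is false for $\dim E>1$ --- it is semisimple with $\dim E$ summands; simplicity of $\bar F^s$ must instead come from d): any nonzero $\gg$-submodule of $\bar F^s$ recovers all of $E$ in $\nn$-cohomology, hence contains the full $\mu$-isotypic component, hence equals $\bar F^s$.
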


\begin{theo}\label{recth1}
 Let $M$ be a simple $(\gg,\kk)-$module of finite type with minimal $\kk-$type $V(\mu)$ which is generic . Then $\pp:=\pp_{\mu+2\rho} = \mm \crplus \nn$ is a minimal compatible parabolic subalgebra. Let $\omega:=\mu - 2\rho^{\perp}_n$ (recall that $\rho^{\perp}_\nn = \rho_{\mathrm{ch}_\tt(\nn \cap \kk^{\perp})}$), and let $E$ be the $\pp-$module $H^r(\nn,M)^{\omega}$ with trivial $\nn-$action, where $r = \mathrm{dim}(\nn\cap\kk^{\perp})$. Then $E$ is a simple $\pp-$module, the pair $(\pp,E)$ satisfies the hypotheses of Theorem \ref{th:4}, and $M$ is canonically isomorphic to $\bar{F}^s(\pp,E)$ for $s=\mathrm{dim}(\nn\cap\kk)$.
\end{theo}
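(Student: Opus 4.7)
My plan is to invert the fundamental-series construction by reading off the pair $(\pp,E)$ from the minimal $\kk$-type of $M$ via top $\nn$-cohomology, and then match $M$ with $\bar{F}^s(\kk,\pp,E)$ through a universal-property argument using Frobenius reciprocity and simplicity.

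\textbf{Step 1 (Minimality of $\pp$).} Genericity of $V(\mu)$ includes $(\gg,\kk)$-regularity of $\mu+2\rho$, so by the characterization recalled in Section~\ref{sect:1.2}, $\pp=\pp_{\mu+2\rho}$ is automatically $\tt$-minimal with $\mm=C(\tt)$ and $\nn\cap\kk=\nn_\kk$. In particular $r=\dim(\nn\cap\kk^{\perp})$ and $s=\dim\nn_\kk$ satisfy $r+s=\dim\nn$.

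\textbf{Step 2 (Extraction of $E$).} I would compute $H^{r}(\nn,M)$ as a $\pp$-module. The key input is a Kostant-type identity identifying the $\omega$-weight space of $H^{r}(\nn\cap\kk^{\perp},V(\mu))$ with a one-dimensional twist of $V(\mu)$, together with the genericity inequalities for $V(\mu)$, which ensure that the minimal $\kk$-type $V(\mu)\subseteq M$ is the only $\kk$-type of $M$ contributing to the $\omega$-weight space of $H^{r}(\nn,M)$. Consequently $E:=H^{r}(\nn,M)^{\omega}$, equipped with trivial $\nn$-action and the inherited $\mm$-action, is a simple finite-dimensional $\pp$-module on which $\tt$ acts by $\omega=\mu-2\rho_{\nn}^{\perp}$.

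\textbf{Step 3 (Genericity of $(\pp,E)$) and the canonical map.} Since $\pp=\pp_{\mu+2\rho}$ is minimal and $V(\mu)$ is generic by hypothesis, $(\pp,E)$ satisfies the premises of \refth{th:4}, so $F^{s}(\kk,\pp,E)$ and $\bar{F}^{s}(\kk,\pp,E)$ are defined with $V(\mu)$ as their unique minimal $\kk$-type. From Step~2, Poincaré duality for $\nn$-cohomology produces a non-zero $\pp$-homomorphism
$$M\longrightarrow E\otimes\Lambda^{\dim\nn}(\nn),$$
hence by Frobenius reciprocity for $\pro_\pp^{\gg}$ a non-zero $\gg$-map into $\pro_{\pp}^{\gg}(E\otimes\Lambda^{\dim\nn}(\nn))$. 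Applying the comparison isomorphism (\ref{eq1}) from \refth{th:3}(c) together with the derived-functor machinery of Section~\ref{sect:1.4}, I land this map in the correct cohomological degree, obtaining a non-zero $\gg$-homomorphism $\phi\colon M\to F^{s}(\kk,\pp,E)$.

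\textbf{Step 4 (Identification).} Since $M$ is simple and $\phi\neq 0$, $\phi$ is injective. Its image lies in $\Soc F^{s}(\kk,\pp,E)=\bar{F}^{s}(\kk,\pp,E)$ by \refth{th:4}(c), and since $\bar{F}^{s}(\kk,\pp,E)$ is simple with the same minimal $\kk$-type $V(\mu)$, the inclusion $\phi(M)\hookrightarrow\bar{F}^{s}(\kk,\pp,E)$ is forced to be an isomorphism.

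\textbf{Main obstacle.} The decisive step is Step~2. One must control the $\omega$-weight space of $H^{r}(\nn,M)$ for the \emph{whole} module $M$, not just the minimal $\kk$-type, and prove that $E$ is simple as an $\mm$-module. This is where the second genericity inequality $\langle\Reup\mu+2\rho-\rho_S,\rho_S\rangle>0$ for every submultiset $S$ of $\ch_\tt\nn$ is essential: it forces the contributions from all strictly larger $\kk$-types of $M$ to vanish at weight $\omega$, ensuring both the clean identification $E\cong H^{r}(\nn,M)^{\omega}$ and the simplicity of $E$. Once Step~2 is established, Steps~3 and~4 are largely formal consequences of \refth{th:3} and \refth{th:4}.
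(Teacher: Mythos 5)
The paper does not actually contain a proof of this theorem: it is stated as one of ``five statements [that] are recollections of the main results of \cite{PZ2} (Theorem~2 through Corollary~4)'', so the only thing one can compare your sketch against is the structure of the argument in \cite{PZ2}. Your overall plan --- recover $E$ as $H^r(\nn,M)^\omega$, use genericity to show $E$ is a simple $\pp$-module, and identify $M$ with $\Soc F^s(\kk,\pp,E)$ --- is the right shape, and you correctly flag Step~2 as the technical core.

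Step~3, however, contains a genuine gap. Poincar\'e duality for Lie algebra (co)homology identifies $H^r(\nn,M)$ with $H_{\dim\nn-r}(\nn,M\otimes\Lambda^{\dim\nn}\nn)=H_s(\nn,M\otimes\Lambda^{\dim\nn}\nn)$, not with $H_0(\nn,M)$. Since the target $E\otimes\Lambda^{\dim\nn}(\nn)$ carries trivial $\nn$-action, any $\pp$-homomorphism out of $M$ into it must factor through the coinvariants $H_0(\nn,M)=M/\nn M$; knowing $H^r(\nn,M)^\omega\cong E$ gives no such map. Moreover, even if one had a $\gg$-map $M\to\pro_\pp^\gg(E\otimes\Lambda^{\dim\nn}\nn)$, applying $R^\cdot\Gamma_{\kk,\tt}$ would land it in degree~$0$ (since $M$ is already a $(\gg,\kk)$-module, $R^i\Gamma_{\kk,\tt}(M)$ vanishes for $i>0$), whereas $F^0(\kk,\pp,E)=0$ by Theorem~\ref{th:4}(a). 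The degree shift from $0$ to $s$ is precisely where the hard work lies: one needs an $\Ext^s$ or Euler-characteristic argument --- e.g.\ a spectral sequence computing $\Ext^s_\gg(M,\pro_\pp^\gg W)$ from $H_\cdot(\nn,M)$, or a dualization via $M_\kk^*$ and induction from $\bar\pp$. Saying that the derived-functor machinery ``lands the map in the correct cohomological degree'' glosses over exactly the step that makes the theorem nontrivial.
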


\begin{cor}\label{corr1_pz2}
 (Generic version of a theorem of Harish-Chandra). There exist at most finitely many simple $(\gg,\kk)-$modules $M$ of finite type with a fixed $Z_{U(\gg)}-$character such that a minimal $\kk-$type of $M$ is generic. (Moreover, each such $M$ has a unique minimal $\kk-$type by Theorem \ref{th:4} b).)
\end{cor}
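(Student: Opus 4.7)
The plan is to combine the classification in Theorem \ref{recth1} with the central character computation in Theorem \ref{th:3} a), in the spirit of Harish-Chandra's original finiteness argument for simple Harish-Chandra modules with fixed infinitesimal character. By Theorem \ref{recth1}, every simple $(\gg,\kk)$-module $M$ of finite type with generic minimal $\kk$-type $V(\mu)$ is canonically of the form $\bar{F}^s(\kk,\pp,E)$, where $\pp = \pp_{\mu+2\rho} = \mm \crplus \nn$ is minimal $\tt$-compatible and $E$ is a simple finite-dimensional $\pp$-module. Thus it suffices to bound the number of isomorphism classes of admissible data $(\pp,E)$ that can produce a fixed $Z_{U(\gg)}$-character $\theta$.

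First, I would fix $\theta = \theta_\lambda$ and apply Theorem \ref{th:3} a): $F^\cdot(\kk,\pp,E)$ has central character $\theta_{\nu+\tilde{\rho}}$ where $\nu \in \hh^*$ is the $\bb$-highest weight of $E$. The constraint $\theta_{\nu+\tilde{\rho}} = \theta_\lambda$ forces $\nu + \tilde{\rho} \in W\cdot\lambda$, a finite $W$-orbit, yielding finitely many candidate highest weights $\nu$. Next, since each minimal $\tt$-compatible parabolic subalgebra $\pp$ has reductive Levi $C(\tt)$ and is thereby determined by a Weyl chamber for $\Delta_\tt$, only finitely many choices of $\pp$ arise. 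For each such $\pp$, the restriction $\omega := \nu|_\tt$ determines $\mu = \omega + 2\rho_\nn^\perp$, and I keep the pair $(\pp,\mu)$ precisely when the consistency condition $\pp = \pp_{\mu+2\rho}$ holds. Finally, a simple finite-dimensional $\pp$-module $E$ with given $\bb$-highest weight $\nu$ is unique up to isomorphism, since the nilradical $\nn$ must act trivially and $E$ reduces to a simple $\mm$-module.

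Assembling these three finite enumerations, we obtain finitely many admissible triples $(\mu,\pp,E)$, each of which determines the isomorphism class of $\bar{F}^s(\kk,\pp,E)$, proving the main claim. The parenthetical uniqueness assertion for the minimal $\kk$-type is already part of Theorem \ref{th:4} b) and requires no extra work. I do not foresee a substantive obstacle: the only point needing care is the compatibility $\pp = \pp_{\mu+2\rho}$, which links the parameter $\mu$ derived from $\omega$ back to the parabolic $\pp$; but since both $\omega$ and $\pp$ now range over finite sets, only finitely many compatible pairs survive, and the finiteness conclusion follows immediately.
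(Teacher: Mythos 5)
Your proof is correct and follows essentially the same strategy as the paper's own proof: use Theorem \ref{recth1} to realize $M$ as $\bar{F}^s(\kk,\pp,E)$, then use Theorem \ref{th:3} a) together with the Harish-Chandra homomorphism to bound the possible highest weights $\nu$ of $E$ (up to a $W$-orbit), observe that there are finitely many admissible minimal $\tt$-compatible parabolics $\pp$ (equivalently, finitely many Borel subalgebras $\bb$ as in Theorem \ref{th:3} a)), and conclude that there are finitely many pairs $(\pp,E)$. You spell out a couple of intermediate steps (the Weyl-chamber parametrization of $\pp$, the compatibility check $\pp = \pp_{\mu+2\rho}$, the uniqueness of $E$ given $\nu$) that the paper leaves implicit, but the logical skeleton is identical.
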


\begin{proof}
 By Theorems \ref{th:3} a) and \ref{recth1}, if $M$ is a simple $(\gg,\kk)-$module of finite type with generic minimal $\kk-$type $V(\mu)$ for some $\mu$, then the $Z_{U(\gg)}-$character of $M$ is $\theta_{\nu + \tilde{\rho}}$. There are finitely many Borel subalgebras $\bb$ as in Theorem \ref{th:3} a); thus, if $\theta_{\nu + \tilde{\rho}}$ is fixed, there are finitely many possibilities for the weight $\nu$ (as $\theta_{\nu + \tilde{\rho}}$ determines $\nu + \tilde{\rho}$ up to a finite choice). Therefore, up to isomorphism, there are finitely many possibilities for the $\pp-$module $E$, and hence, up to isomorphism, there are finitely many possibilities for $M$.
\end{proof}

\begin{theo}\label{recth2}
 Assume that the pair $(\gg,\kk)$ is regular, i.e. $\tt$ contains a regular element of $\gg$. Let $M$ be a simple $(\gg,\kk)-$module (a priori of infinite type) with a minimal $\kk-$type $V(\mu)$ which is generic. Then $M$ has finite type, and hence by Theorem \ref{recth1}, $M$ is canonically isomorphic to $\bar{F}^s(\pp,E)$ (where $\pp,E$ and $s$ are as in Theorem \ref{recth1}).
\end{theo}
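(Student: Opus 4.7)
The plan is to construct a nonzero $\gg$-homomorphism $M \to F^s(\kk,\pp,E)$ for appropriately chosen $\pp$ and $E$, and then to deduce from simplicity of $M$ together with the socle structure in Theorem \ref{th:4}(c) that $M \cong \bar{F}^s(\pp,E)$. Since $\bar{F}^s(\pp,E)$ is a $(\gg,\kk)$-module of finite type by Theorem \ref{th:4}(b), this would automatically yield finite type of $M$, so that Theorem \ref{recth1} becomes applicable and supplies the canonical identification.

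First I would exploit the regularity hypothesis: the assumption that $\tt$ contains a $\gg$-regular element forces $\mm = C(\tt) = \hh$ to be a Cartan subalgebra of $\gg$, and hence the minimal $\tt$-compatible parabolic $\pp = \pp_{\mu + 2\rho}$ is a Borel subalgebra. In particular, every simple $\pp$-module is one-dimensional, so the candidate $E$ is determined entirely by a single $\hh$-weight, namely an extension to $\hh$ of $\omega := \mu - 2\rho_{\nn}^{\perp} \in \tt^*$.

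Next I would produce a nonzero class in $H^r(\nn,M)^\omega$, where $r = \dim(\nn\cap\kk^{\perp})$. Choose a $\bb_\kk$-highest weight vector $v \in M[\mu]$ of $\tt$-weight $\mu$ and decompose $\nn = \nn_\kk \oplus (\nn\cap\kk^{\perp})$. Using the Hochschild--Serre spectral sequence for $\nn_\kk \subset \nn$, I would compute $H^{\cdot}(\nn \cap \kk^\perp, M)$ via a Kostant-type argument. Genericity of $V(\mu)$ together with minimality of $\mu$ in the norm ordering guarantees that $v$ contributes a nontrivial class to the $\omega$-weight space at the top cohomological degree $r$; moreover, the strict inequality in condition (2) of the definition of genericity rules out the possibility that other $\kk$-types of $M$ (of which there may a priori be infinitely many, since we have not assumed finite type) contribute to the $\omega$-weight space and cancel the contribution of $v$. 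Regularity then uniquely determines the $\tt^{\perp}$-weight of this class, producing a genuine nonzero element of $H^r(\nn,M)^\omega$.

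Finally, by Frobenius reciprocity for $\pro_\pp^\gg$ combined with the universal property of $R^{\cdot}\Gamma_{\kk,\tt}$ (analogous to the construction used to build the fundamental series itself), a nonzero class in $H^r(\nn,M)^\omega$ corresponds to a nonzero $\gg$-homomorphism $M \to F^s(\kk,\pp,E)$. Simplicity of $M$ forces this map to be injective; injectivity combined with the identification $\Soc F^s(\kk,\pp,E) = \bar{F}^s(\pp,E)$ from Theorem \ref{th:4}(c) forces the image to equal $\bar{F}^s(\pp,E)$. The isomorphism $M \cong \bar{F}^s(\pp,E)$ then yields both the finite-type conclusion and the structural identification.

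The hardest step I anticipate is the nonvanishing of $H^r(\nn,M)^\omega$ without the a priori finite-type hypothesis: one must rule out cancellation among cohomology contributions from an a priori infinite collection of $\kk$-types of arbitrarily large multiplicity. Regularity is essential precisely here, because the reduction $\mm = \hh$ converts $\mm$-module computations into pure $\hh$-weight computations, and the genericity bounds on $V(\mu)$ then effectively decouple the contribution of the minimal $\kk$-type from the remainder of $M$. Without regularity, $\mm$ could be strictly larger than $\hh$, and controlling the $\mm$-module structure on $H^r(\nn,M)$ for $M$ of potentially infinite type would require substantially more machinery than is developed here.
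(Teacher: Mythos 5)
Your overall plan — construct a nonzero $\gg$-map $M \to F^s(\kk,\pp,E)$, conclude finite type from the known finite type of $F^s$, then invoke Theorem \ref{recth1} — is a reasonable strategy, and your observation that regularity forces $\mm = C(\tt) = \hh$ and hence makes $\pp$ a Borel with one-dimensional $E$ is correct. But there are genuine gaps in the execution, and they fall precisely in the step you flag as the hardest.

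First, the Hochschild--Serre step is not available in the form you invoke it: for a general reductive (non-symmetric) $\kk \subset \gg$, neither $\nn_\kk$ nor $\nn \cap \kk^\perp$ is an ideal of $\nn$. One has $[\nn_\kk, \nn\cap\kk^\perp] \subseteq \nn \cap \kk^\perp$ because $\kk^\perp$ is a $\kk$-module, but $[\nn\cap\kk^\perp, \nn\cap\kk^\perp]$ need not lie in either summand, so the exact sequence of Lie algebras you need does not exist. The actual computations in this setting use a filtration of the Koszul complex $\Lambda^\cdot\nn^*\otimes M$ adapted to the vector-space decomposition $\nn = \nn_\kk \oplus (\nn\cap\kk^\perp)$, not the Hochschild--Serre spectral sequence. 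Second, the assertion that genericity and minimality "rule out cancellation" and produce a nonzero class in $H^r(\nn,M)^\omega$ when $M$ may have infinite type is exactly the crux, and you do not give an argument; if $M[\mu]$ is infinite-dimensional, the $\omega$-weight space of the complex in degree $r$ is infinite-dimensional, and cocycle/coboundary bookkeeping there is not a straightforward extension of the finite-type case. Third, and most seriously, the passage from a nonzero element of $H^r(\nn,M)^\omega$ to a nonzero map $M \to F^s(\kk,\pp,E)$ via "Frobenius reciprocity for $\pro_\pp^\gg$ combined with the universal property of $R^\cdot\Gamma_{\kk,\tt}$" is not a formal adjunction. Frobenius reciprocity for $\pro$ gives $\Hom_\gg(M,\pro_\pp^\gg J) \cong \Hom_\pp(M,J)$, i.e.\ controls $\nn$-\emph{invariants}, not degree-$r$ cohomology; and since $R^s\Gamma_{\kk,\tt}$ is a derived functor of a right adjoint, the relation between $\Hom_\gg(M, R^s\Gamma(N))$ and $\Ext^s_{(\gg,\tt)}(M,N)$ is governed by a spectral sequence whose edge maps can kill the class you construct. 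Some additional input (a bottom-layer/duality argument, or an admissibility bound via Casselman--Osborne together with regularity) is needed to close the argument, and without it the proof does not go through.
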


\begin{cor}\label{corr2_pz2}
 Let the pair $(\gg,\kk)$ be regular.
 \begin{enumerate}[a)]
  \item There exist at most finitely many simple $(\gg,\kk)-$modules $M$ with a fixed $Z_{U(\gg)}-$character, such that a minimal $\kk-$type of $M$ is generic. All such $M$ are of finite type (and have a unique minimal $\kk-$type by Theorem \ref{th:4} b)).
  \item (Generic version of Harish-Chandra's admissibility theorem). Every simple $(\gg,\kk)-$module with a generic minimal $\kk-$type has finite type.
 \end{enumerate}
\end{cor}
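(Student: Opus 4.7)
The plan is to derive Corollary \ref{corr2_pz2} as an essentially immediate consequence of the two preceding results, with Theorem \ref{recth2} doing the heavy lifting and Corollary \ref{corr1_pz2} supplying the finiteness count. Since regularity of $(\gg,\kk)$ is assumed, we are in a position to upgrade any simple $(\gg,\kk)$-module with a generic minimal $\kk$-type to a finite-type module before applying fundamental series classification.

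First I would address part b). Let $M$ be a simple $(\gg,\kk)$-module with a generic minimal $\kk$-type $V(\mu)$. Regularity of $(\gg,\kk)$ is precisely the hypothesis of Theorem \ref{recth2}, whose conclusion already asserts that $M$ has finite type (and in fact $M \cong \bar{F}^s(\pp,E)$ for the associated $\pp = \pp_{\mu+2\rho}$ and $E = H^r(\nn,M)^{\omega}$). Thus part b) is nothing more than extracting the finite-type conclusion from Theorem \ref{recth2}; no further argument is needed.

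For part a), let $\theta$ be a fixed $Z_{U(\gg)}$-character, and let $M$ be any simple $(\gg,\kk)$-module on which $Z_{U(\gg)}$ acts through $\theta$ and which has a generic minimal $\kk$-type. By part b), $M$ is automatically of finite type. Consequently, the collection of such $M$ (up to isomorphism) is a subclass of the collection of simple $(\gg,\kk)$-modules of finite type with central character $\theta$ and a generic minimal $\kk$-type. The latter class is finite by Corollary \ref{corr1_pz2}, which settles the finiteness assertion. Uniqueness of the minimal $\kk$-type for each such $M$ is inherited directly from Theorem \ref{th:4} b), via the identification $M \cong \bar{F}^s(\pp,E)$ furnished by Theorem \ref{recth2}.

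There is no genuine obstacle in this argument: the entire content of the corollary is packaged into Theorems \ref{recth2}, \ref{th:4} and Corollary \ref{corr1_pz2}. The only thing to verify is that the reduction from ``a priori infinite type'' to ``finite type'' provided by Theorem \ref{recth2} is indeed available under the stated regularity hypothesis, which is immediate. So the proof I would write is essentially a two-line assembly: invoke Theorem \ref{recth2} to establish part b), then cite part b) together with Corollary \ref{corr1_pz2} (and Theorem \ref{th:4} b) for uniqueness of the minimal $\kk$-type) to obtain part a).
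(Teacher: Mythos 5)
Your proof is correct and follows essentially the same path as the paper. The only cosmetic difference is organizational: the paper instructs the reader to repeat the argument of Corollary \ref{corr1_pz2} with Theorem \ref{recth2} substituted for Theorem \ref{recth1}, whereas you first settle part b) from Theorem \ref{recth2} and then invoke Corollary \ref{corr1_pz2} verbatim as a black box on the resulting finite-type modules; these are logically equivalent, and your ordering is arguably slightly cleaner since it avoids re-tracing the counting argument.
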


\begin{proof}
 The proof of a) is as the proof of Corollary \ref{corr1_pz2} but uses Theorem \ref{recth2} instead of Theorem \ref{recth1}, and b) is a direct consequence of Theorem \ref{recth2}.
\end{proof}

\vspace{0.3cm}

The following statement follows from Corollary \ref{corr:5} and Theorem \ref{recth1}.

\begin{cor}\label{corr:6}
 Let $M$ be as in Theorem \ref{recth1}. If the $\bb-$highest weight of $E$ is not $\ll-$integral for any reductive subalgebra $\ll$ with $\tilde{\kk} \subset \ll \subseteq \gg$, then $\tilde{\kk}$ is a maximal reductive subalgebra of $\gg[M]$.
\end{cor}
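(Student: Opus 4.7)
The plan is to reduce directly to Corollary \ref{corr:5} by using Theorem \ref{recth1} to realize $M$ as a subquotient of a fundamental series module built from the very data $(\pp,E)$ appearing in the hypothesis.

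First, I would invoke Theorem \ref{recth1}: under its assumptions on $M$, the minimal compatible parabolic $\pp = \pp_{\mu+2\rho} = \mm \crplus \nn$ is determined by the minimal $\kk$-type of $M$, the $\pp$-module $E = H^r(\nn,M)^\omega$ is simple, and there is a canonical isomorphism $M \cong \bar{F}^s(\kk,\pp,E)$ with $s = \dim(\nn \cap \kk)$. In particular, $M$ is a non-zero $\gg$-submodule of $F^s(\kk,\pp,E)$, and hence a non-zero subquotient of the graded module $F^\cdot(\kk,\pp,E) = R^\cdot\Gamma_{\kk,\tt}(N_\pp(E))$.

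Next, I would apply Corollary \ref{corr:5} to this subquotient. The hypothesis of the present corollary on the $\bb$-highest weight of $E$ is exactly the hypothesis of Corollary \ref{corr:5}: the weight $\nu$ of $E$ fails to be $\ll$-integral after restriction to $\hh \cap \ll$ for every reductive $\ll$ with $\tilde{\kk} \subset \ll \subseteq \gg$. The conclusion of Corollary \ref{corr:5} is that $\tilde{\kk}$ is a maximal reductive subalgebra of $\gg[M]$, which is what we wanted.

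There is essentially no obstacle: the substantive work was done in Theorem \ref{recth1} (which identifies $M$ with the socle of a fundamental series module) and in Corollaries \ref{corr:3} and \ref{corr:5} (which together encode both inclusions $\tilde{\kk} \subseteq \gg[M]$ and $\ll \nsubseteq \gg[M]$ for every reductive $\ll \supsetneq \tilde{\kk}$). The only point worth a sentence of verification is that the parabolic $\pp$ and the $\pp$-module $E$ furnished by Theorem \ref{recth1} coincide with the data to which Corollary \ref{corr:5} is applied; this is immediate from the construction, since the hypothesis on $\nu$ refers to precisely the $E$ produced by Theorem \ref{recth1}.
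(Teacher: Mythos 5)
Your proof is correct and follows exactly the route the paper indicates: the paper states that Corollary \ref{corr:6} ``follows from Corollary \ref{corr:5} and Theorem \ref{recth1},'' and your argument spells out that deduction, using Theorem \ref{recth1} to identify $M$ with $\bar F^s(\kk,\pp,E) \subseteq F^s(\kk,\pp,E)$ and then applying Corollary \ref{corr:5} to this nonzero subquotient with the same pair $(\pp,E)$.
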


\begin{defi}\label{def:1}
 Let $\pp \supset \bb_{\kk}$ be a minimal $\tt-$compatible parabolic subalgebra and let $E$ be a simple finite dimensional $\pp-$module on which $\tt$ acts by $\omega$. We say that the pair $(\pp,E)$ is \textit{allowable} if $\mu = \omega + 2\rho^{\perp}_{\nn}$ is dominant integral for $\kk$, $\pp_{\mu+2\rho} = \pp$, and $V(\mu)$ is generic.
\end{defi}

Theorem \ref{recth1} provides a classification of simple $(\gg,\kk)-$modules with generic minimal $\kk-$type in terms of allowable pairs. Note that for any minimal $\tt-$compatible parabolic subalgebra $\pp \supset \bb_{\kk}$, there exists a $\pp-$module $E$ such that $(\pp,E)$ is allowable.

\section{The case $\kk \simeq \sl(2)$}\label{sect:3}

Let $\kk \simeq \sl(2)$. In this case there is only one minimal $\tt-$compatible parabolic subalgebra $\pp = \mm \crplus \nn$ of $\gg$ which contains $\bb_{\kk}$. Furthermore, we can identify the elements of $\tt^*$ with complex numbers, and the $\bb_{\kk}-$dominant integral weights of $\tt$ in $\nn \cap \kk^{\perp}$ with non-negative integers. It is shown in \cite{PZ2} that in this case the genericity assumption on a $\kk-$type $V(\mu), \mu \geq 0,$ amounts to the condition $\mu \geq \Gamma:=\tilde{\rho}(h)-1$ where $h \in \hh$ is the semisimple element in a standard basis $e,h,f$ of $\kk \simeq \sl(2)$.

In our work \cite{PZ5} we have proved a different sufficient condition for the main results of \cite{PZ2} to hold when $\kk \simeq \sl(2)$. Let $\lambda_1$ and $\lambda_2$ be the maximum and submaximum weights of $\tt$ in $\nn \cap \kk^{\perp}$ (if $\lambda_1$ has multiplicity at least two in $\nn \cap \kk^{\perp}$, then $\lambda_2 = \lambda_1$; if $\dim \nn \cap \kk^{\perp} = 1$, then $\lambda_2 = 0$). Set $\Lambda := \frac{\lambda_1 + \lambda_2}{2}$.

\begin{theo}\label{th:5}
 If $\kk \simeq \sl(2)$, all statements of section \ref{sect:2} from Theorem \ref{th:4} through Corollary \ref{corr2_pz2} hold if we replace the assumption that $\mu$ is generic by the assumption $\mu \geq \Lambda$. As a consequence, the isomorphism classes of simple $(\gg,\kk)-$modules whose minimal $\kk-$type is $V(\mu)$ with $\mu \geq \Lambda$ are parameterized by the isomorphism classes of simple $\pp-$modules $E$ on which $\tt$ acts via $\mu - 2\rho_{\nn}^{\perp}$. 
\end{theo}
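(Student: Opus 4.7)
The plan is to show that the bound $\mu \geq \Lambda$ is sufficient as a replacement for the genericity hypothesis throughout the proofs of Theorem \ref{th:4} through Corollary \ref{corr2_pz2} when $\kk \simeq \sl(2)$. First I would verify the setup: here $\tt^*$ is one-dimensional, its $\bb_\kk$-dominant integral elements are identified with non-negative integers, and there is a unique minimal $\tt$-compatible parabolic $\pp = \mm \crplus \nn$ of $\gg$ containing $\bb_\kk$. In particular, the identification $\pp = \pp_{\mu+2\rho}$ is automatic once $\mu + 2\rho$ is strictly positive on $\Delta_\tt^+$, which holds whenever $\mu \geq \Lambda$, so the data $(\pp,E)$ of the fundamental series makes sense without the generic hypothesis.

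Next I would trace the proofs of \cite{PZ2} and isolate exactly where the two genericity conditions enter. They appear in essentially two places: (i) the vanishing $R^i\Gamma_{\kk,\tt}(N_\pp(E)) = 0$ for $i \neq s$, established in \cite{PZ2} via a Hochschild--Serre/Bott-type spectral sequence argument together with a central-character separation, and (ii) the proof that $V(\mu)$ is the unique minimal $\kk$-type of $F^s(\kk,\pp,E)$, obtained by comparing $\parallel \mu' + 2\rho \parallel^2$ for competing highest weights $\mu'$ appearing in $N_\pp(E)$. In the $\sl(2)$ setting both reduce to scalar inequalities. The key observation is that among all submultisets $S$ of $\ch_\tt\nn$, the only ones producing genuinely tight constraints on $\mu$ are those whose contributing weights are the two largest weights $\lambda_1, \lambda_2 \in \ch_\tt(\nn \cap \kk^\perp)$: competing $\kk$-types $V(\mu')$ arise in the Koszul/Zuckerman computation from twisting by a signed sum of $\tt$-weights of $\nn$, and the extremal norm comparison $\parallel \mu + 2\rho \parallel^2 < \parallel (\mu - \lambda_1 - \lambda_2) + 2\rho + \ldots \parallel^2$ expands to precisely $\mu \geq (\lambda_1 + \lambda_2)/2 = \Lambda$. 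All other submultiset conditions entering (1)--(2) of the genericity definition are dominated by this one.

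The main obstacle is to make the vanishing argument (i) precise with only the weaker bound. Concretely, one must show that in the spectral sequence computing $R^\cdot \Gamma_{\kk,\tt}(N_\pp(E))$, every term contributing outside the top degree $s$ carries a $Z_{U(\kk)}$-infinitesimal character incompatible with that forced on $F^s(\kk,\pp,E)$ by $\theta_{\nu + \tilde{\rho}}$ and the cohomological shift, and that this incompatibility is guaranteed by $\mu \geq \Lambda$. Once (i) and (ii) are established, the socle and cogeneration statements of parts c) and d) of Theorem \ref{th:4} follow by the same arguments as in \cite{PZ2}, and the reconstruction Theorems \ref{recth1}, \ref{recth2} and Corollaries \ref{corr1_pz2}, \ref{corr2_pz2} carry over verbatim, since their proofs only invoke the three previous statements through the genericity hypothesis. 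Finally, the parametrization assertion is immediate from the adapted Theorem \ref{recth1}: each simple $(\gg,\kk)$-module with minimal $\kk$-type $V(\mu)$, $\mu \geq \Lambda$, is canonically isomorphic to $\bar{F}^s(\kk,\pp,E)$ for a unique (up to isomorphism) simple $\pp$-module $E$ on which $\tt$ acts via $\mu - 2\rho_\nn^\perp$, and conversely every such $E$ gives rise to a simple $(\gg,\kk)$-module with minimal $\kk$-type $V(\mu)$ via $\bar{F}^s$.
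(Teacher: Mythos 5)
The paper does not actually prove Theorem \ref{th:5}; it is recorded as a recollection of results from \cite{PZ5}, which is where the proof lives. So there is no in-paper proof to compare against. Evaluating your sketch on its own merits: you correctly locate the two places where genericity enters the \cite{PZ2} arguments --- the degree vanishing $F^i = 0$ for $i \neq s$ and the extremality/uniqueness of $V(\mu)$ among $\kk$-types --- and you correctly identify that in the $\sl(2)$ case both reduce to scalar inequalities, with the decisive comparison involving only $\lambda_1,\lambda_2$.

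However, there is a conceptual slip in the sentence ``All other submultiset conditions entering (1)--(2) of the genericity definition are dominated by this one.'' That is false as stated: in the $\sl(2)$ case genericity reduces to $\mu \geq \Gamma = \rho_\nn - \rho$, which for $\dim(\nn\cap\kk^\perp) \geq 3$ is \emph{strictly} stronger than $\mu \geq \Lambda$ (indeed $\Gamma - \Lambda = \tfrac{1}{2}(\lambda_3 + \cdots + \lambda_k)$, and Table A shows the gap is typically large). The content of the theorem is precisely that the conclusions of \cite{PZ2} hold under a hypothesis genuinely weaker than genericity, not that the submultiset conditions collapse to the $\Lambda$-bound. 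Your proof must therefore argue that the stronger submultiset inequalities appearing in the definition of genericity are \emph{not needed} in the degree-vanishing and norm-comparison steps, rather than that they are implied; this requires actually inspecting which twists contribute in the Koszul/Hochschild--Serre computation and showing that the extremal offending term is $\mu - (\lambda_1 + \lambda_2)$. As it stands, you flag this as ``the main obstacle,'' describe what would need to be shown, and leave the norm comparison with a ``$\ldots$'' placeholder --- so the technical core is asserted rather than established. The deduction of Theorems \ref{recth1}--\ref{recth2} and Corollaries \ref{corr1_pz2}--\ref{corr2_pz2} from the modified Theorem \ref{th:4} is fine, and the final parametrization claim does follow as you say, but the proposal is a correct outline rather than a proof until the vanishing and extremality steps are carried through.
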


The $\sl(2)-$subalgebras of a simple Lie algebra are classified (up to conjugation) by Dynkin in \cite{D}. We will now illustrate the computation of the bound $\Lambda$ as well as the genericity condition on $\mu$ in examples.

We first consider three types of $\sl(2)-$subalgebras of a simple Lie algebra: long root$-\sl(2)$, short root$-\sl(2)$ and principal $\sl(2)$ (of course, there are short roots only for the series $B, C$ and for $G_2$ and $F_4$). We compare the bounds $\Lambda$ and $\Gamma$ in the following table.

{%
\newcommand{\mc}[3]{\multicolumn{#1}{#2}{#3}}
\begin{table}[ht]
\begin{center}
\begin{tabular}{|c|c|c|c|}\hline
× & long root & short root & principal\\\hline
\mc{1}{|l|}{$A_n, n\geq 2$} & \mc{1}{l|}{$\Gamma = n-1 \geq 1 = \Lambda$} & \mc{1}{l|}{not applicable} & \mc{1}{l|}{$\Gamma = \frac{n(n+1)(n+2)}{6}-1 \geq 2n-1 = \Lambda$}\\\hline
\mc{1}{|l|}{$B_n, n\geq 2$} & \mc{1}{l|}{$\Gamma = 2n-3 \geq 1 = \Lambda$} & \mc{1}{l|}{$\Gamma = 2n-2 \geq 2 = \Lambda$} & \mc{1}{l|}{$\Gamma = \frac{n(n+1)(4n-1)}{6}-1 > 4n-3 = \Lambda$}\\\hline
\mc{1}{|l|}{$C_n, n\geq 3$} & \mc{1}{l|}{$\Gamma = n-1 > 1 = \Lambda$} & \mc{1}{l|}{$\Gamma = 2n-2 > 2 = \Lambda$} & \mc{1}{l|}{$\Gamma = \frac{n(n+1)(2n+1)}{3}-1 > 4n-3 = \Lambda$}\\\hline
\mc{1}{|l|}{$D_n, n\geq 4$} & \mc{1}{l|}{$\Gamma = 2n-4 > 1 = \Lambda$} & \mc{1}{l|}{not applicable} & \mc{1}{l|}{$\Gamma = \frac{2(n-1)n(n+1)}{3}-1 > 4n-7 = \Lambda$}\\\hline
\mc{1}{|l|}{$E_6$} & \mc{1}{l|}{$\Gamma = 10 > 1 = \Lambda$} & \mc{1}{l|}{not applicable} & \mc{1}{l|}{$\Gamma = 155 > 21 = \Lambda$}\\\hline
\mc{1}{|l|}{$E_7$} & \mc{1}{l|}{$\Gamma = 16 > 1 = \Lambda$} & \mc{1}{l|}{not applicable} & \mc{1}{l|}{$\Gamma = 398 > 33 = \Lambda$}\\\hline
\mc{1}{|l|}{$E_8$} & \mc{1}{l|}{$\Gamma = 28 > 1 = \Lambda$} & \mc{1}{l|}{not applicable} & \mc{1}{l|}{$\Gamma = 1239 > 57 = \Lambda$}\\\hline
\mc{1}{|l|}{$F_4$} & \mc{1}{l|}{$\Gamma = 7 > 1 = \Lambda$} & \mc{1}{l|}{$\Gamma = 10 > 2 = \Lambda$} & \mc{1}{l|}{$\Gamma = 109 > 21 = \Lambda$}\\\hline
\mc{1}{|l|}{$G_2$} & \mc{1}{l|}{$\Gamma = 2 > 1 = \Lambda$} & \mc{1}{l|}{$\Gamma = 4 > 3 = \Lambda$} & \mc{1}{l|}{$\Gamma = 15 > 9 = \Lambda$}\\\hline
\end{tabular}
\end{center}
\end{table}
}%

\begin{center}
 \textbf{Table A}
\end{center}

\vspace{0.5cm}	

Let's discuss the case $\gg = F_4$ in more detail. Recall that the \textit{Dynkin index} of a semisimple subalgebra $\ss \subset \gg$ is the quotient of the normalized $\gg-$invariant summetic bilinear form on $\gg$ restricted to $\ss$ and the normalized $\ss-$invariant symmetric bilinear form on $\ss$, where for both $\gg$ and $\ss$ the square length of a long root equals $2$. According to Dynkin \cite{D}, the conjugacy class of an $\sl(2)-$subalgebra $\kk$ of $F_4$ is determined by the Dynkin index of $\kk$ in $F_4$. Moreover, for $\gg = F_4$ the following integers are Dynkin indices of $\sl(2)-$subalgebras: $1$(long root), $2$(short root), $3,4,6,8,9,10,11,12,28,35,36,60,156.$ The bounds $\Lambda$ and $\Gamma$ are given in the following table.

\begin{table}[ht]
\begin{center}
\begin{tabular}{|c|c|c|c|}\hline
Dynkin index & 1 & 2 & 3\\\hline 
  & $\Gamma = 7 > 1 =\Lambda$ & $\Gamma = 10 > 2 =\Lambda$ & $\Gamma = 14 > 3 =\Lambda$\\\hline
Dynkin index & 4 & 6 & 8\\\hline
  & $\Gamma = 15 > 3 =\Lambda$ & $\Gamma = 16 > 4 =\Lambda$ & $\Gamma = 17 > 4 =\Lambda$\\\hline
Dynkin index & 9 & 10 & 11\\\hline
  & $\Gamma = 25 > 5 =\Lambda$ & $\Gamma = 26 > 5 =\Lambda$ & $\Gamma = 28 > 6 =\Lambda$\\\hline
Dynkin index & 12 & 28 & 35\\\hline
  & $\Gamma = 29 > 6 =\Lambda$ & $\Gamma = 45 > 9 =\Lambda$ & $\Gamma = 50 > 10 =\Lambda$\\\hline
Dynkin index & 36 & 60 & 156\\\hline
  & $\Gamma = 51 > 10 =\Lambda$ & $\Gamma = 67 > 13 =\Lambda$ & $\Gamma = 109 > 21 =\Lambda$\\\hline
\end{tabular}
\end{center}
\end{table}

\begin{center}
 \textbf{Table B}
\end{center}

\vspace{0.5cm}

We conclude this section by recalling a conjecture from \cite{PZ5}. Let $\mathcal{C}_{\bar{\pp},\tt,n}$ denote the full subcategory of $\gg-$mod consisting of finite-length modules with simple subquotients which are $\bar{\pp}-$locally finite $(\gg,\tt)-$modules $N$ whose $\tt-$weight spaces $N^{\beta},\;\beta \in \mathbb{Z}$, satisfy $\beta \geq n$. Let $\mathcal{C}_{\kk,n}$ be the full subcategory of $\gg-$mod consisting of finite length modules whose simple subquotients are $(\gg,\kk)-$modules with minimal $\kk \simeq \sl(2)-$type $V(\mu)$ for $\mu \geq n$. We show in \cite{PZ5} that the functor $R^{1}\Gamma_{\kk,\tt}$ is a well-defined fully faithful functor from $\mathcal{C}_{\pp,\tt,n+2}$ to $\mathcal{C}_{\kk,n}$ for $n \geq 0$. Moreover, we make the following conjecture.

\begin{conject}\label{conj:1} 
 Let $n \geq \Lambda$. Then $R^{1}\Gamma_{\kk,\tt}$ is an equivalence between the categories $\mathcal{C}_{\bar{\pp},\tt,n+2}$ and $\mathcal{C}_{\kk,n}$.
\end{conject}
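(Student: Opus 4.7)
The proof splits into establishing essential surjectivity, since full faithfulness of $R^{1}\Gamma_{\kk,\tt}$ on $\mathcal{C}_{\bar{\pp},\tt,n+2}$ has already been established in \cite{PZ5}. First I would verify a preliminary vanishing statement: for $N \in \mathcal{C}_{\bar{\pp},\tt,n+2}$ one has $R^{i}\Gamma_{\kk,\tt}(N) = 0$ for all $i \neq 1$. The case $i = 0$ is immediate because every $\tt$-weight $\beta$ of $N$ satisfies $\beta \geq n+2 > 0$, so $N$ contains no nonzero $\tt$-invariant (hence no $\kk$-finite) vectors. The case $i \geq 2$ is forced by $\dim(\kk/\bb_{\kk}) = 1$ for $\kk \simeq \sl(2)$. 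As a consequence $R^{1}\Gamma_{\kk,\tt}$ is exact on $\mathcal{C}_{\bar{\pp},\tt,n+2}$ and reflects short exact sequences; combined with full faithfulness, this already produces an injection on Yoneda $\Ext^{1}$ between objects of the source and their images.

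For essential surjectivity on simples, I would match the two classifications. By \refth{th:5}, every simple $M \in \mathcal{C}_{\kk,n}$ with $n \geq \Lambda$ is isomorphic to $\bar{F}^{1}(\kk,\pp,E) = \Soc\,R^{1}\Gamma_{\kk,\tt}(N_{\pp}(E))$ for a simple $\pp$-module $E$ uniquely determined up to isomorphism by $M$. On the source side, the simple objects of $\mathcal{C}_{\bar{\pp},\tt,n+2}$ are classified by analogous highest-$\tt$-weight type data, once again indexed (after an appropriate shift by $\tilde{\rho}_{\nn}$) by simple $\pp$-modules. I would then compute $R^{1}\Gamma_{\kk,\tt}$ on each simple source object directly, using the canonical isomorphism (\ref{eq1}) together with \refth{th:4} b), and verify that it lands on the prescribed $\bar{F}^{1}(\kk,\pp,E)$.

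For general objects I would argue by induction on length, reducing the task to showing that the essential image of $R^{1}\Gamma_{\kk,\tt}$ is closed under extensions in $\mathcal{C}_{\kk,n}$. Given a short exact sequence $0 \to M' \to M \to M'' \to 0$ in $\mathcal{C}_{\kk,n}$ with $M' \cong R^{1}\Gamma_{\kk,\tt}(N')$ and $M'' \cong R^{1}\Gamma_{\kk,\tt}(N'')$ in the essential image, the extension class of $M$ should be realized as the image of an element of $\Ext^{1}_{\gg}(N'',N')$. I would establish this surjectivity via a Grothendieck-type spectral sequence relating $R^{\cdot}\Gamma_{\kk,\tt}$ to $\Ext^{\cdot}_{\gg}$, using the vanishing from the preliminary step to collapse all higher terms; alternatively one could realize the extension directly on the source side by applying $\Gamma_{\tt,0}(\pro_{\pp}^{\gg}(\,\cdot\, \otimes \Lambda^{\dim\nn}(\nn)))$ to a suitable extension of $\pp$-modules and matching the output to $M$ via (\ref{eq1}).

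The hardest step is expected to be this $\Ext^{1}$-surjectivity. The bound $n \geq \Lambda$ must enter decisively here, since otherwise \refth{th:5} itself can fail and new simples outside the essential image can appear as extension terms, as the examples in Tables A and B suggest; controlling the spectral sequence, or equivalently constructing explicit source-side preimages of arbitrary Yoneda extensions, will require a careful analysis combining a Bernstein--Gelfand--Gelfand-type resolution for $\bar{\pp}$-locally finite modules with the uniqueness-of-minimal-$\kk$-type statement in \refth{th:4} b).
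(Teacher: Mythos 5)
The statement you are attempting to prove is labeled a \emph{Conjecture} in the paper, not a theorem. The authors explicitly state that they do not have a proof in general --- only for $\gg \simeq \sl(2)$, and (jointly with Serganova) for $\gg \simeq \sl(3)$. There is therefore no ``paper's own proof'' to compare your proposal against. What the paper \emph{does} establish (via \cite{PZ5}) is that $R^{1}\Gamma_{\kk,\tt}$ is a well-defined, fully faithful functor $\mathcal{C}_{\bar{\pp},\tt,n+2} \to \mathcal{C}_{\kk,n}$ for $n \geq 0$; essential surjectivity is exactly the open content of the conjecture, and you correctly identify it as the part that remains to be shown.

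Your proposal is a reasonable outline of how one \emph{might} attack essential surjectivity, and your preliminary observations (vanishing of $R^{i}\Gamma_{\kk,\tt}$ for $i \neq 1$ on $\mathcal{C}_{\bar{\pp},\tt,n+2}$ because positive $\tt$-weights kill $\kk$-finite vectors, and $\dim\nn_{\kk}=1$ kills degrees $\geq 2$; matching of simple objects via \refth{th:5} and \refth{th:4}) are in line with what one would expect. However the proposal is not a proof: as you yourself flag, the decisive step --- showing the essential image is closed under extensions, i.e.\ surjectivity of $R^{1}\Gamma_{\kk,\tt}$ on $\Ext^{1}$ --- is left as an expectation rather than an argument. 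You propose a Grothendieck-type spectral sequence or a direct source-side construction of preimages of Yoneda classes, but neither is carried out, and it is precisely here that the bound $n \geq \Lambda$ would have to enter in a way the proposal does not make quantitative. There is also a gap before that: you assert that ``the simple objects of $\mathcal{C}_{\bar{\pp},\tt,n+2}$ are classified by analogous highest-$\tt$-weight type data,'' but $N_{\pp}(E)$ is not simple --- it has a simple socle --- so matching simple source objects with simple targets requires proving that $R^{1}\Gamma_{\kk,\tt}$ carries the simple socle (or a suitable simple subquotient) to the corresponding $\bar{F}^{1}(\kk,\pp,E)$ rather than to a longer module. In short, your proposal is an honest and plausible \emph{plan of attack} for an open conjecture, not a proof, and it should be labeled as such; in particular one cannot say it ``takes the same approach as the paper'' or a ``different route,'' because the paper offers no route at all for general $\gg$.
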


 We have proof of this conjecture for $\gg \simeq \sl(2)$ and, jointly with V. Serganova, for $\gg \simeq \sl(3)$.

\section{Eligible subalgebras}\label{sect:4}

In what follows we adopt the following terminology. A \textit{root subalgebra} of $\gg$ is a subalgebra which contains a Cartan subalgebra of $\gg$. An $r$-\textit{subalgebra} of $\gg$ is a subalgebra $\ll$ whose root spaces (with respect to a Cartan subalgebra of $\ll$) are root spaces of $\gg$. The notion of $r$-subalgebra goes back to \cite{D}. A root subalgebra is, of course, an $r$-subalgebra.

We now give the following key definition.

\begin{defi}\label{def:2}
    An algebraic reductive in $\gg$ subalgebra $\kk$ is eligible if $C(\tt)=\tt+C(\kk)$.
\end{defi}

Note that in the above definition one can replace $\tt$ with any Cartan subalgebra of $\kk$. Furthermore, if $\kk$ is eligible then $\hh\subset C(\tt)=\tt+C(\kk)\subset\tilde{\kk}=\kk+C(\kk)$, i.e. $\hh$ is a Cartan subalgebra of both $\tilde{\kk}$ and $\gg$. In particular, $\tilde{\kk}$ is a reductive root subalgebra of $\gg$. As $\kk$ is an ideal in $\tilde{\kk}$, $\kk$ is an $r$-subalgebra of $\gg$.

\begin{prop}\label{prop:4}
 Assume $\kk$ is an $r-$subalgebra of $\gg$. The following three conditions are equivalent:
\begin{itemize}
  \item[(i)] $\kk\;is\;eligible$;
  \item[(ii)] $\,C(\kk)_{ss} = C(\tt)_{ss}$; 
  \item[(iii)] $\;\,\dim\,C(\kk)_{ss} = \dim\,C(\tt)_{ss}$.
\end{itemize}
\end{prop}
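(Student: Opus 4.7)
The plan is to prove the cycle (i) $\Rightarrow$ (ii) $\Rightarrow$ (iii) $\Rightarrow$ (ii) $\Rightarrow$ (i), the substantive content residing in (ii) $\Rightarrow$ (i). Since $\tt \subseteq \kk$ implies $[\tt, C(\kk)] = 0$, (i) yields
$$C(\tt)_{ss} = [C(\tt),C(\tt)] = [\tt + C(\kk), \tt + C(\kk)] = [C(\kk), C(\kk)] = C(\kk)_{ss},$$
which is (ii). The implication (ii) $\Rightarrow$ (iii) is trivial. For (iii) $\Rightarrow$ (ii), any semisimple subalgebra of a reductive Lie algebra lies in its derived subalgebra; concretely, writing $C(\tt) = Z(C(\tt)) \oplus C(\tt)_{ss}$, the projection of $C(\kk)_{ss}$ onto the abelian summand vanishes, so $C(\kk)_{ss} \subseteq C(\tt)_{ss}$, and equal dimensions force equality.

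For (ii) $\Rightarrow$ (i) I pick $\hh \supseteq \tt$ a Cartan of $\gg$ realizing the $r$-subalgebra structure, so that $\kk = \tt \oplus \bigoplus_{\alpha \in \Delta_\kk} \gg^\alpha$ for some $\Delta_\kk \subseteq \Delta$ and $C(\tt) = \hh \oplus \bigoplus_{\alpha \in \Delta^0} \gg^\alpha$ with $\Delta^0 := \{\alpha \in \Delta : \alpha|_\tt = 0\}$. The inclusion $\tt + C(\kk) \subseteq C(\tt)$ is automatic, and equality splits into two independent claims: the root-space claim $\gg^\alpha \subseteq C(\kk)$ for every $\alpha \in \Delta^0$, and the Cartan-level claim $\hh \subseteq \tt + (\hh \cap C(\kk))$. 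The first is immediate from (ii), since $C(\tt)_{ss}$ is generated by the root spaces $\gg^\alpha$, $\alpha \in \Delta^0$, and by hypothesis sits inside $C(\kk)$. The second does not follow from (ii) directly, and it is here that the $r$-subalgebra hypothesis enters.

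To establish the Cartan-level claim my approach is to show $\tt^\perp \subseteq \hh \cap C(\kk)$, where $\tt^\perp$ is the $\langle \cdot, \cdot \rangle$-orthogonal complement of $\tt$ in $\hh$. Fix $\alpha \in \Delta_\kk$; reductivity of $\kk$ supplies an $\sl(2)$-triple at the $\tt$-root $-\alpha|_\tt$, so the $\tt$-weight space $\kk^{-\alpha|_\tt}$ equals some $\gg^{\alpha'} \subseteq \kk$ and the bracket $[\gg^\alpha, \gg^{\alpha'}] \subseteq \kk^0 = \tt$ is nonzero. But if $\alpha + \alpha'$ were a nonzero root, this bracket would lie in $\gg^{\alpha+\alpha'}$, disjoint from $\hh$; therefore $\alpha' = -\alpha$ and the coroot $h_\alpha \in \tt$. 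Identifying $\hh$ with $\hh^*$ via $\langle \cdot, \cdot \rangle$, this says $\alpha|_{\tt^\perp} = 0$ for every $\alpha \in \Delta_\kk$, so $\tt^\perp \subseteq \{h \in \hh : \alpha(h) = 0 \,\forall\, \alpha \in \Delta_\kk\} = \hh \cap C(\kk)$ and $\hh = \tt + \tt^\perp \subseteq \tt + (\hh \cap C(\kk))$. The main obstacle I anticipate is cleanly separating the two ingredients: (ii) forces root-space coincidence inside $\Delta^0$, while the $r$-subalgebra hypothesis independently forces coroots of $\kk$ into $\tt$; neither fact alone would suffice.
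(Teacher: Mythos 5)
Your proof is correct and takes essentially the same approach as the paper: the pivot in both is the observation that the $r$-subalgebra hypothesis forces $\hh \subseteq \tt + C(\kk)$, after which the root spaces in $C(\tt)$ are controlled by condition (ii)/(iii). The paper states that inclusion as an unproved ``observe that,'' whereas you supply the missing justification via the coroot argument (for $\alpha \in \Delta_\kk$ the $\tt$-weight space $\kk^{-\alpha|_\tt}$ must be $\gg^{-\alpha}$, so $h_\alpha \in \tt$ and hence $\tt^\perp \subseteq \hh \cap C(\kk)$); you also route (iii)$\Rightarrow$(i) through (ii) rather than going directly as the paper does. This is a useful elaboration of the same idea rather than a genuinely different argument.
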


\begin{proof}
 The implications (i)$\Rightarrow$(ii)$\Rightarrow$(iii) are obvious. To see that (iii) implies (i), observe that if $\kk$ is an $r-$subalgebra of $\gg$, then $\hh \subseteq \tt + C(\kk) \subseteq C(\tt)$. Therefore the inclusion $\tt + C(\kk) \subseteq C(\tt)$ is proper if and only if $\gg^{\pm\alpha}\in C(\tt)\backslash C(\kk)$ for some root $\alpha \in \Delta$, or, equivalently, if the inclusion $C(\kk)_{ss} \subseteq C(\tt)_{ss}$ is proper.
\end{proof}

\vspace{0.3cm}

An algebraic, reductive in $\gg$, $r$-subalgebra $\kk$ may or may not be eligible. If $\kk$ is a root subalgebra, then $\kk$ is always eligible. If $\gg$ is simple of types $A,C,D$ and $\kk$ is a semisimple $r$-subalgebra, then $\kk$ is necessarily eligible. In general, a semisimple $r$-subalgebra is eligible if and only if the roots of $\gg$ which vanish on $\tt$ are strongly orthogonal to the roots of $\kk$. For example, if $\gg$ is simple of type $B$ and $\kk$ is a simple $r$-subalgebra of type $B$ of rank less or equal than $\rk\gg-2$, then $C(\kk)_{ss}$ is simple of type $D$ whereas $C(\tt)_{ss}$ is simple of type $B$. Hence in this case $\kk$ is not eligible.

Note, however that any semisimple $r$-subalgebra $\kk '$ can be extended to an eligible subalgebra $\kk$ just by setting $\kk:=\kk '+\hh_{C(\kk ')}$ where $\hh_{C(\kk ')}$ is a Cartan subalgebra of $C(\kk ')$. Finally, note that if $x$ is any algebraic regular semisimple element of $C(\kk ')$, then $\kk:= \kk'\oplus Z(C(\kk')) +\mathbb{C}x$ is an eligible subalgebra of $\gg$. Indeed, if $\tt '\subseteq\kk '$ is a Cartan subalgebra of $\kk '$, and $\hh_{\kk} := \tt' \oplus Z(C(\kk')) + \mathbb{C}x$ is the corresponding Cartan subalgebra of $\kk$, then $C(\hh_{\kk})$ is a Cartan subalgebra of $\gg$. Hence, 

\begin{equation}\label{eq2}
    C(\hh_{\kk})=\hh_{\kk}+C(\kk)
\end{equation}
as the right-hand side of (\ref{eq2}) necessarily contains a Cartan subalgebra of $\gg$.

To any eligible subalgebra $\kk$ we assign a unique weight $\varkappa\in\hh^*$ (the ``canonical weight associated with $\kk$''). It is defined by the conditions $\varkappa|_{(\hh\cap\kk_{ss})}=\rho,\; \varkappa|_{(\hh\cap C(\kk))}=0$.

\section{The generalized discrete series}\label{sect:5}

In what follows we assume that $\kk$ is eligible and $\hh \subset \tilde{\kk}$. In this case $\hh$ is a Cartan subalgebra both of $\tilde{\kk}$ and $\gg$. Let $\lambda\in\hh^*$ and set $\gamma := \lambda|_{\tt}$. Assume that $\mm:=\mm_{\gamma}=C(\tt)$. Assume furthermore that $\lambda$ is $\mm$-integral and let $E_\lambda$ be a simple finite-dimensional $\mm$-module with $\bb-$highest weight $\lambda$. Then $$D(\kk,\lambda):=F^s(\kk,\pp_{\gamma},E_\lambda\otimes\Lambda^{\dim\nn_{\gamma}}(\nn_{\gamma}^*))$$
is by definition a \textit{generalized discrete series module}.

Note that since $D(\kk,\lambda)$ is a fundamental series module, Theorem \ref{th:3} applies to $D(\kk,\lambda)$. In the case when $\kk$ is a root subalgebra and $\lambda$ is regular, we have $\lambda = \gamma$ and $\pp_{\gamma}$ is a Borel subalgebra of $\gg$ which we denote by $\bb_{\lambda}$. Then $D(\kk,\lambda) = R^s\Gamma_{\kk,\hh}(\Gamma_\hh(\pro^\gg_{\bb_\lambda}\,E_\lambda))$, i.e. $D(\kk,\lambda)$ is cohomologically co-induced from a $1-$dimensional $\bb_{\lambda}-$module. If in addition, $\kk$ is a symmetric subalgebra, $\lambda$ is $\kk$-integral, and $\lambda - \tilde{\rho}$ is $\bb_{\lambda}-$dominant regular, then $D(\kk,\lambda)$ is a $(\gg,\kk)$-module in Harish-Chandra's discrete series, see \cite{KV}, Ch.XI.

Suppose $\kk$ is eligible but $\kk$ is not a root subalgebra. Suppose further that $\tilde{\kk}$ is symmetric. Any simple subquotient $M$ of $D(\kk,\lambda)$ is a $(\gg,\tilde{\kk})-$module and thus a Harish-Chandra module for $(\gg,\tilde{\kk})$. However, $M$ may or may not be in the discrete series of $(\gg,\tilde{\kk})-$modules. This becomes clear in Theorem \ref{th:6} below.

Our first result is a sharper version of the main result of \cite{PZ3} for an eligible $\kk$. 

\begin{theo}\label{th3}
    Let $\kk\subseteq\gg$ be eligible. Assume that $\lambda-2\varkappa$ is $\tilde{\kk}$-integral and dominant. Then, $D(\kk,\lambda) \neq 0$. Moreover, if we set $\mu := (\lambda - 2\varkappa)|_{\tt}$, then $V(\mu)$ is the unique minimal $\kk-$type of $D(\kk,\lambda)$. Finally, there are isomorphisms of simple finite-dimensional $\tilde{\kk}-$modules
$$D(\kk,\lambda)[\mu] \cong D(\kk,\lambda)\langle\lambda - 2\varkappa\rangle \simeq V_{\tilde{\kk}}(\lambda - 2\varkappa).$$

\end{theo}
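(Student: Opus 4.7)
The plan is to use the comparison isomorphism \refeq{eq1} of Theorem \ref{th:3} c) to rewrite $D(\kk,\lambda)$ as a Zuckerman cohomological induction entirely inside $\tilde{\kk}$, and then to apply a classical bottom-layer $\tilde{\kk}$-type computation together with the branching from $\tilde{\kk}$ to $\kk$.

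First, I would exploit eligibility of $\kk$: $\mm = C(\tt) = \tt + C(\kk) \subseteq \tilde{\kk}$, so $\tilde{\kk}\cap\mm = \mm$, and the $\mm$-module $\Lambda^{\dim\nn_\gamma}(\nn_\gamma^*)\otimes\Lambda^{\dim\nn_\gamma}(\nn_\gamma)$ is canonically trivial. After cancellation, \refeq{eq1} specializes to
\begin{equation*}
D(\kk,\lambda) \,\simeq\, R^s\,\Gamma_{\tilde{\kk},\mm}\bigl(\Gamma_{\mm,0}(\pro^\gg_{\pp_\gamma}(E_\lambda))\bigr),
\end{equation*}
and $\qq := \pp_\gamma\cap\tilde{\kk} = \mm\crplus\nn_\kk$ is a parabolic subalgebra of $\tilde{\kk}$ with nilradical $\nn_\kk$ of dimension $s$. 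This identifies $D(\kk,\lambda)$ with a Vogan--Zuckerman cohomologically induced module from $\mm$ to $\tilde{\kk}$ along $\qq$.

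Second, I would apply the classical bottom-layer theorem, using that the hypothesis ``$\lambda - 2\varkappa$ is $\tilde{\kk}$-dominant integral'' is the good-range condition for this induction. Computing $H^s(\nn_\kk, V_{\tilde{\kk}}(\lambda-2\varkappa))$ via Kostant's theorem for the parabolic $\qq\subset\tilde{\kk}$ and applying Frobenius reciprocity for the derived Zuckerman functor, one obtains the non-vanishing $D(\kk,\lambda)\neq 0$ and the identification of $V_{\tilde{\kk}}(\lambda-2\varkappa)$ as a $\tilde{\kk}$-type of $D(\kk,\lambda)$ with multiplicity one. The shift by $-2\varkappa$ matches the $\rho$-shifts in Kostant's theorem: on $\hh\cap\kk_{ss}$ one has $\varkappa = \rho$, the classical shift coming from the $\kk_{ss}$-side, while on $\hh\cap C(\kk)$ one has $\varkappa = 0$, reflecting the absence of any shift on the $C(\kk)$-side. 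All other $\tilde{\kk}$-types $V_{\tilde{\kk}}(\sigma)$ of $D(\kk,\lambda)$ then satisfy $\sigma = \lambda - 2\varkappa + \beta$ with $\beta$ a non-zero non-negative integer combination of elements of $\supp_\hh\nn_\gamma$.

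Finally, I would descend from $\tilde{\kk}$-types to $\kk$-types. Since $\tilde{\kk} = \kk_{ss}\oplus C(\kk)$ and $Z(\kk)\subseteq Z(C(\kk))$, every simple finite-dimensional $\tilde{\kk}$-module $V_{\tilde{\kk}}(\sigma)$ restricts to $\kk$ as a single $\kk$-isotypic component of highest weight $\sigma|_\tt$, with multiplicity equal to the dimension of its $C(\kk)_{ss}$-factor. In particular $V_{\tilde{\kk}}(\lambda-2\varkappa)|_\kk$ is $V(\mu)$-isotypic. Since $\mu = \gamma - 2\rho$, we have $\mathrm{Re}\mu + 2\rho = \mathrm{Re}\gamma$, and since $\langle\mathrm{Re}\gamma,\sigma\rangle > 0$ for every $\sigma\in\supp_\tt\nn_\gamma$, the identity
\begin{equation*}
\|\mathrm{Re}(\mu+\beta|_\tt)+2\rho\|^2 - \|\mathrm{Re}\mu+2\rho\|^2 \,=\, 2\langle\mathrm{Re}\gamma,\mathrm{Re}(\beta|_\tt)\rangle + \|\mathrm{Re}(\beta|_\tt)\|^2
\end{equation*}
is strictly positive for $\beta\neq 0$. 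This forces both the uniqueness and minimality of $V(\mu)$ as a $\kk$-type of $D(\kk,\lambda)$ and the equality $D(\kk,\lambda)[\mu] = D(\kk,\lambda)\langle\lambda-2\varkappa\rangle \simeq V_{\tilde{\kk}}(\lambda-2\varkappa)$. The main obstacle is the precise shift in the second step: verifying that the bottom layer of $R^s\Gamma_{\tilde{\kk},\mm}(\Gamma_{\mm,0}(\pro^\gg_{\pp_\gamma}(E_\lambda)))$ is $V_{\tilde{\kk}}(\lambda-2\varkappa)$, with the shift $-2\varkappa$ dictated exactly by the canonical weight. This requires careful bookkeeping with Kostant's theorem for $\qq\subset\tilde{\kk}$, the $\Lambda^{\mathrm{top}}$ normalization hidden in the definition of $D(\kk,\lambda)$, and the fact that $\pp_\gamma$ is only $\tt$-minimal and not $\hh$-minimal.
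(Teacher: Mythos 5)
Your strategy (rewrite $D(\kk,\lambda)$ via the comparison isomorphism of Theorem \ref{th:3} c), apply the KV bottom-layer theory for the induction $\mm \to \tilde\kk$ along $\qq = \mm \crplus \nn_\kk$, and then descend to $\kk$ using $\tilde\kk = \kk_{ss} \oplus C(\kk)$) shares its skeleton with the paper's proof but diverges at a key step. The paper does not try to derive the $\kk$-type multiplicity and the uniqueness of the minimal $\kk$-type from scratch; it quotes Lemma 2 of \cite{PZ3} to get $\dim\Hom_\kk(V(\mu), D(\kk,\lambda)) = \dim E_\lambda$, non-vanishing, and unique minimal $\kk$-type, and then shows that the bottom layer $V_{\tilde\kk}(\lambda - 2\varkappa) \hookrightarrow D(\kk,\lambda)\langle\lambda-2\varkappa\rangle \hookrightarrow D(\kk,\lambda)[\mu]$ is a chain of equalities by a dimension count against the branching identity $\Hom_\kk(V(\mu), V_{\tilde\kk}(\lambda-2\varkappa)) \cong E_\lambda$. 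You instead try to read off everything directly: Kostant's theorem for $\qq \subset \tilde\kk$ gives multiplicity one of $V_{\tilde\kk}(\lambda-2\varkappa)$ and pushes all other $\tilde\kk$-types to $\sigma = \lambda - 2\varkappa + \beta$ with $\beta$ a non-zero non-negative integer combination of $\supp_\hh\nn_\gamma$, and a norm inequality then forces minimality and uniqueness of $V(\mu)$. This is a clean and self-contained route (it avoids invoking \cite{PZ3} at all, and it gets non-vanishing for free from the bottom layer), but note that the $\tilde\kk$-type structure claim you state is precisely the nontrivial content that \cite{PZ3} Lemma 2 encapsulates on the $\kk$-side; you have correctly identified in your last paragraph that this bookkeeping is the crux, and it is not yet filled in. One should also be careful that the hypothesis ``$\lambda - 2\varkappa$ is $\tilde\kk$-dominant integral'' is weaker than the usual good-range condition for cohomological induction; the bottom-layer theorem of \cite{KV} applies without good range, but your statement that the hypothesis ``is the good-range condition'' is not accurate and should be replaced by a direct appeal to Theorem 5.80 of \cite{KV} as the paper does.
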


\begin{proof}
 Note that $\mu = \gamma -2\rho.$ By Lemma 2 in \cite{PZ3} $$\mathrm{dim}\,\Hom_\kk(V(\mu),D(\kk,\lambda))=\mathrm{dim}\,E_{\lambda},$$ and hence $D(\kk,\lambda) \neq 0$. In addition, $V(\mu)$ is the unique minimal $\kk-$type of $D(\kk,\lambda)$. By construction, $D(\kk,\lambda)[\mu]$ is a finite-dimensional $\tilde{\kk}-$module. We will use Theorem \ref{th:3} c) to compute $D(\kk,\lambda)[\mu]$ as a $\tilde{\kk}-$module. Since $\kk$ is eligible, we have $\mm = \tt+C(\kk)$. As $[\tt,C(\kk)] = 0$ and $\tt$ is toral, the restriction of $E_{\lambda}$ to $C(\kk)$ is simple. We have
$$\tilde{\kk} = \kk_{ss} \oplus C(\kk),$$
and hence there is an isomorphism of $\tilde{\kk}-$modules
$$V_{\tilde{\kk}}(\lambda-2\varkappa) \cong (V(\mu)|_{\kk_{ss}})\boxtimes E_{\lambda}.$$
Consequently, we have isomorphisms of $C(\kk)-$modules
\begin{equation}\label{eq_th2}
 \Hom_{\kk} (V(\mu),V_{\tilde{\kk}}(\lambda - 2\varkappa)) \cong \Hom_{\kk_{ss}} ((V(\mu)|_{\kk_{ss}}),V_{\tilde{\kk}}(\lambda - 2\varkappa)) \cong E_{\lambda}.
\end{equation}
 
Write $\pp_\gamma = \pp$ and note that $\tilde{\kk} \cap \mm = \mm$. By Theorem \ref{th:3} c), we have a canonical isomorphism
$$D(\kk,\lambda) \cong R^s\Gamma_{\tilde{\kk},\mm}(\Gamma_{\mm,0}(\pro^{\gg}_{\pp}E_\lambda)).$$
According to the theory of the bottom layer \cite{KV}, Ch.V, Sec.6, $D(\kk,\lambda)$ contains the $\tilde{\kk}-$module
$$R^s\Gamma_{\tilde{\kk},\mm}(\Gamma_{\mm,0}(\mathrm{pro}_{\tilde{\kk}\cap\pp}^{\tilde{\kk}}E_\lambda))$$
which is in turn isomorphic to $V_{\tilde{\kk}}(\lambda-2\varkappa)$.

By the above argument, we have a sequence of injections
$$V_{\tilde{\kk}}(\lambda-2\varkappa) \hookrightarrow D(\kk,\lambda)\langle\lambda-2\varkappa\rangle \hookrightarrow D(\kk,\lambda)[\mu].$$

We conclude from (\ref{eq_th2}) that the above sequence of injections is in fact a sequence of isomorphisms of simple $\tilde{\kk}-$modules.
\end{proof}

\begin{cor}\label{corr3}
 Under the assumptions of Theorem \ref{th3}, there exists a simple $(\gg,\kk)-$module $M$ of finite type over $\kk$, such that if $V(\mu_M)$ is a minimal $\kk-$type of $M$, then $V(\mu_M)$ is the unique minimal $\kk-$type of $M$ and there is an isomorphism of finite-dimensional $\tilde{\kk}-$modules
$$M[\mu_M] \cong V_{\tilde{\kk}}(\lambda - 2\varkappa).$$ 
In particular, $M[\mu_M]$ is a simple $\tilde{\kk}-$submodule of $M$.
\end{cor}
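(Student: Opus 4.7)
The plan is to extract $M$ as a simple subquotient of the generalized discrete series module $D(\kk,\lambda)$ itself. By \refth{th:3} a), $D(\kk,\lambda)$ is a $(\gg,\kk)$-module of finite type, and by \refth{th:3} b) it has finite length as a $\gg$-module. I would begin by fixing a Jordan--Hölder filtration $0 = D_0 \subset D_1 \subset \cdots \subset D_n = D(\kk,\lambda)$ in the category of $(\gg,\kk)$-modules, with simple subquotients $S_i := D_i/D_{i-1}$.

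The next step is to locate the Jordan--Hölder factor carrying the minimal $\kk$-type. Since $\kk$ acts semisimply on every $(\gg,\kk)$-module, the $V(\mu)$-isotypic component functor $N \mapsto N[\mu]$ is exact, so the filtration above produces a $\tilde{\kk}$-module filtration of $D(\kk,\lambda)[\mu]$ with successive quotients $S_i[\mu]$. By \refth{th3}, $D(\kk,\lambda)[\mu]$ is isomorphic to the simple finite-dimensional $\tilde{\kk}$-module $V_{\tilde{\kk}}(\lambda - 2\varkappa)$, and a simple module admits only the trivial filtration. Hence there is a unique index $i_0$ with $S_{i_0}[\mu] \cong V_{\tilde{\kk}}(\lambda - 2\varkappa)$, while $S_i[\mu] = 0$ for $i \neq i_0$, and I would set $M := S_{i_0}$.

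It then remains to verify the three asserted properties of $M$. First, as a subquotient of a $(\gg,\kk)$-module of finite type, $M$ is itself of finite type over $\kk$. Second, to identify the minimal $\kk$-type, let $V(\mu_M)$ be any minimal $\kk$-type of $M$; then $V(\mu_M)$ is a $\kk$-type of $D(\kk,\lambda)$, and \refth{th3} states that $V(\mu)$ is the unique minimizer of $\mu' \mapsto \parallel\Reup\mu'+2\rho\parallel^2$ on the $\kk$-types of $D(\kk,\lambda)$. Since $M[\mu] \neq 0$ and $V(\mu_M)$ minimizes this same quantity on the $\kk$-types of $M$, this forces $\mu_M = \mu$, so $V(\mu)$ is the unique minimal $\kk$-type of $M$ and $M[\mu_M] \cong V_{\tilde{\kk}}(\lambda - 2\varkappa)$. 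The last assertion then follows at once, as this finite-dimensional isotypic component sits inside $M$ and is a simple $\tilde{\kk}$-module.

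I do not anticipate a genuine obstacle here: once \refth{th3} is in hand, the simplicity of $D(\kk,\lambda)[\mu]$ as a $\tilde{\kk}$-module forces the minimal-$\kk$-type isotypic component to concentrate in a single Jordan--Hölder factor, and the rest is bookkeeping. The one technical point worth double-checking is the exactness of the $V(\mu)$-isotypic component functor on $(\gg,\kk)$-modules, which is immediate from the semisimple action of $\kk$ on modules in this category.
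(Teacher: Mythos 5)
Your proof is correct but takes a genuinely different route from the paper. The paper constructs $M$ explicitly: it forms $\bar{D}(\kk,\lambda)$, the $U(\gg)$-submodule of $D(\kk,\lambda)$ generated by the $\tilde{\kk}$-isotypic component $D(\kk,\lambda)\langle\lambda-2\varkappa\rangle$, observes that the simplicity of this isotypic component over $\tilde{\kk}$ forces every proper $\gg$-submodule of $\bar{D}(\kk,\lambda)$ to intersect it trivially (hence the sum of all proper submodules is still proper, so a unique maximal proper submodule $N(\kk,\lambda)$ exists), and then sets $M := \bar{D}(\kk,\lambda)/N(\kk,\lambda)$. Your argument instead invokes Theorem \ref{th:3} b) to get a Jordan--H\"older filtration, applies the exact $V(\mu)$-isotypic functor, and uses the simplicity of $D(\kk,\lambda)[\mu]$ to pin the whole isotypic component onto a single composition factor. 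By Jordan--H\"older uniqueness, the two constructions produce the same simple module up to isomorphism; the difference is methodological. The paper's construction is self-contained in the sense that it does not require finite length of $D(\kk,\lambda)$ (the maximal proper submodule exists for reasons independent of that), and it runs exactly parallel to the generic case where $\bar{F}^s(\kk,\pp,E)$ is the socle, thereby preparing Corollary \ref{corr:8}. Your route leans on finite length but has the advantage of transparency: once one knows $D(\kk,\lambda)[\mu]$ is a simple $\tilde{\kk}$-module, exactness immediately localizes it in a unique factor, making the uniqueness and identification of the minimal $\kk$-type of $M$ essentially automatic bookkeeping, just as you say.
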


\begin{proof}
First we construct a module $M$ as required. Let $\bar{D}(\kk,\lambda)$ be the $U(\gg)-$submodule of $D(\kk,\lambda)$ generated by the $\tilde{\kk}-$isotypic component $D(\kk,\lambda)\langle\lambda-2\varkappa\rangle$. Suppose $N$ is a proper $\gg-$submodule of $\bar{D}(\kk,\lambda)$. Since $D(\kk,\lambda)\langle\lambda-2\varkappa\rangle$ is simple over $\tilde{\kk}$,
$$N \cap (D(\kk,\lambda)\langle\lambda-2\varkappa\rangle) = 0.$$

Thus, if $N(\kk,\lambda)$ is the maximum proper submodule of $\bar{D}(\kk,\lambda)$, the quotient module 
$$M = \bar{D}(\kk,\lambda)/N(\kk,\lambda)$$
is a simple $(\gg,\tilde{\kk})-$module, and $M$ has finite type over $\kk$. Writing $\mu_M = \mu = \gamma-2\rho$, we see that $M$ has unique minimal $\kk-$type $V(\mu_M)$. Finally, by Theorem \ref{th3}, we have an isomorphism of finite-dimensional $\tilde{\kk}-$modules,
$$M[\mu_M] \cong V_{\tilde{\kk}}(\lambda-2\varkappa).$$ 
\end{proof}

If $\kk$ is symmetric (and hence $\kk$ is a root subalgebra due to the eligibility of $\kk$), Theorem \ref{th3} and Corollary \ref{corr3} go back to \cite{V} (where they are proven by a different method).

The following two statements are consequences of the main results of section \ref{sect:2} and Theorem \ref{th3}.

\begin{cor}\label{corr:8}
    Let $\kk$ be eligible, $\lambda\in\hh^*$ be such that $\lambda-2\varkappa$ is $\tilde{\kk}$-integral and $V(\mu)$ is generic for $\mu:= \lambda|_{\tt}-2\rho$.
	\begin{enumerate}[a)]
	    \item $\Soc\,D(\kk,\lambda)$ is a simple $(\gg,\kk)$-module with unique minimal $\kk$-type $V(\mu)$.
		\item There is a canonical isomorphism of $C(\kk)$-modules
		$$\Hom_\kk(V(\mu),\Soc\,D(\kk,\lambda))\simeq E_\lambda.$$
		\item There is a canonical isomorphism of $\tilde{\kk}$-modules
		$$V(\mu)\otimes\Hom_\kk(V(\mu),\Soc\,D(\kk,\lambda))\simeq V_{\tilde{\kk}}(\lambda-2\varkappa),$$
		i.e. the $V(\mu)$-isotypic component of $\Soc D(\kk,\lambda)$ is a simple $\tilde{\kk}$-module isomorphic to $V_{\tilde{\kk}}(\lambda-2\varkappa)$.
		\item If $\lambda-2\varkappa$ is not $\ll-$integral for any reductive subalgebra $\ll$ such that $\tilde{\kk} \subset \ll \subseteq \gg$, then $\tilde{\kk}$ is a maximal reductive subalgebra of $\gg[M]$ for any subquotient $M$ of $D(\kk,\lambda)$, in particular of $\Soc\,D(\kk,\lambda)$.
	\end{enumerate}
\end{cor}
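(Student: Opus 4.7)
The plan is to combine the structural results of Theorem \ref{th:4} for the fundamental series with generic minimal $\kk$-type, the identification in Theorem \ref{th3} of the $V(\mu)$-isotypic component as a simple $\tilde{\kk}$-module in the eligible case, and Corollary \ref{corr:5} for the maximality assertion in d).

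First I would fit $D(\kk,\lambda)$ into the framework of Theorem \ref{th:4}. Writing $D(\kk,\lambda)=F^s(\kk,\pp,E)$ with $\pp=\pp_\gamma$ and $E=E_\lambda\otimes\Lambda^{\dim\nn}(\nn^*)$, the $\tt$-weight on $E$ is $\omega=\gamma-2\rho_\nn$, so $\omega+2\rho_\nn^{\perp}=\gamma-2\rho=\mu$ using $\rho_\nn^\perp=\rho_\nn-\rho$. Together with the standing genericity of $V(\mu)$ and $\mm$-integrality of $\lambda$, this places $(\pp,E)$ in the hypotheses of Theorem \ref{th:4}.

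Parts a), b), c) then follow compactly. From Theorem \ref{th:4} b) and c), $\Soc D(\kk,\lambda)=\bar F^s(\kk,\pp,E)$ is simple with unique minimal $\kk$-type $V(\mu)$, and because $\bar F^s$ is generated by its $V(\mu)$-isotypic component, one has $D(\kk,\lambda)[\mu]=\Soc D(\kk,\lambda)[\mu]$; this is a). By Theorem \ref{th3} this isotypic component is isomorphic as a $\tilde{\kk}$-module to $V_{\tilde{\kk}}(\lambda-2\varkappa)$, and the proof of Theorem \ref{th3} also records the decomposition $V_{\tilde{\kk}}(\lambda-2\varkappa)\cong V(\mu)|_{\kk_{ss}}\boxtimes E_\lambda$ under $\tilde{\kk}=\kk_{ss}\oplus C(\kk)$. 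Applying $\Hom_\kk(V(\mu),\cdot)$ produces the canonical $C(\kk)$-module isomorphism $\Hom_\kk(V(\mu),\Soc D(\kk,\lambda))\cong E_\lambda$ of b), and c) is then the reassembly $V(\mu)\otimes E_\lambda\cong V_{\tilde{\kk}}(\lambda-2\varkappa)$.

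For d), I would invoke Corollary \ref{corr:5} applied to every non-zero subquotient $M$ of $D(\kk,\lambda)$. The Corollary requires the $\bb$-highest weight $\nu$ of $E$ to be non-integral with respect to any reductive subalgebra $\ll$ with $\tilde{\kk}\subsetneq\ll\subseteq\gg$. Here $\nu=\lambda-2\tilde{\rho}_\nn$, while the hypothesis of d) is stated in terms of $\lambda-2\varkappa$, so the step I expect to be the main obstacle is the equivalence of these two integrality conditions. The key observation is that in the eligible setup $\hh\subset\tilde{\kk}\subseteq\ll$, so any $\ZZ$-linear combination of roots of $\gg$ is automatically $\ll$-integral; the difference $2\tilde{\rho}_\nn-2\varkappa$ is such a combination, being a difference of sums of $\gg$-roots (those in $\nn$ and those in $\nn_\kk$). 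Hence $\nu$ is $\ll$-integral iff $\lambda-2\varkappa$ is, and Corollary \ref{corr:5} yields that $\tilde{\kk}$ is a maximal reductive subalgebra of $\gg[M]$.
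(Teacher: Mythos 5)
Your proposal is correct and follows the same route the paper takes: parts a)--c) are assembled from Theorem \ref{th:4} (applied after rewriting $D(\kk,\lambda)=F^s(\kk,\pp_{\mu+2\rho},E_\lambda\otimes\Lambda^{\dim\nn}(\nn^*))$ and noting $\omega+2\rho^\perp_\nn=\mu$), Theorem \ref{th3}, and the decomposition \eqref{eq_th2}; part d) is an application of Corollary \ref{corr:5}. The one place where you add something the paper glosses over is the weight-matching step in d): Corollary \ref{corr:5} is phrased in terms of the $\bb$-highest weight $\nu=\lambda-2\tilde\rho_\nn$ of $E$, while d) is phrased in terms of $\lambda-2\varkappa$, and these differ by $2\tilde\rho_\nn-2\varkappa$. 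Your observation that this difference is a $\ZZ$-linear combination of $\gg$-roots (using that $\kk$ is an $r$-subalgebra, so the $\kk_{ss}$-roots summed in $2\rho$ lift to $\gg$-roots vanishing on $\hh\cap C(\kk)$, and that any intermediate $\ll\supseteq\tilde\kk\supseteq\hh$ is automatically a root subalgebra) is exactly what is needed to transfer the non-integrality hypothesis, and it is a detail the paper's proof leaves implicit. So: same approach, with a welcome explicitation of the $\nu$ versus $\lambda-2\varkappa$ comparison.
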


\begin{proof}

    \textit{a)} Observe that $\pp_{\gamma} = \pp_{\mu+2\rho}$, and $D(\kk,\lambda)=F^s(\kk,\pp_{\mu+2\rho},\,E_{\lambda} \otimes \Lambda^{\mathrm{dim}\,\nn}(\nn^*))$. So, a) follows from Theorem \ref{th:4} c).

\vspace{0.3cm}

 \textit{b)} By Theorem \ref{th:4} c), $\Hom_{\kk}(V(\mu),\;\Soc\,D(\kk,\lambda)) = \Hom_{\kk}(V(\mu),\;D(\kk,\lambda))$, which in turn is isomorphic to $\Hom_{\kk}(V(\mu),V_{\tilde{\kk}}(\lambda-2\varkappa))$ by Theorem \ref{th3}. The desired isomorphism follows now from (\ref{eq_th2}).

\vspace{0.3cm}

 \textit{c)} This follows from the isomorphism in b) and the isomorphism $V(\mu) \otimes E_{\lambda} \cong V_{\tilde{\kk}}(\lambda-2\varkappa)$ of $\tilde{\kk}-$modules.
 
 \vspace{0.3cm}
 
 \textit{d)} Follows from Corollary \ref{corr:5}. Note that, since $\kk$ is eligible, $\tilde{\kk}$ is a root subalgebra and the condition that $\lambda - 2\varkappa$ be not $\ll-$integral involves only finitely many subalgebras $\ll$. 
\end{proof}

\begin{cor}\label{corr:9}
 Let $\kk$ be eligible and let $V(\mu)$ be a generic $\kk-$type.
      \begin{enumerate}[a)]
       \item Let $M$ be a simple $(\gg,\kk)-$module of finite type with minimal $\kk-$type $V(\mu)$. Then $M[\mu]$ is a simple finite-dimensional $\tilde{\kk}-$module isomorphic to $V_{\tilde{\kk}}(\lambda)$ for some weight $\lambda \in \hh^*$ such that $\lambda|_{\tt} = \mu + 2\rho$ and $\mu - 2\varkappa$ is $\tilde{\kk}-$integral. Moreover,$$M \cong \Soc\,D(\kk,\lambda).$$
       If in addition $\lambda$ is not $\ll-$integral for any reductive subalgebra $\ll$ with $\tilde{\kk} \subset \ll \subseteq \gg$, then $\tilde{\kk}$ is a unique maximal reductive subalgebra of $\gg[M]$.
       
	\item If $\kk$ is regular in $\gg$, then a) holds for any simple $(\gg,\kk)-$module with generic minimal $\kk-$type $V(\mu)$. In particular $M$ has finite type over $\kk$.
      \end{enumerate}
\end{cor}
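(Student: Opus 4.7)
The plan is to combine Theorem \ref{recth1}, Theorem \ref{th3}, and Corollary \ref{corr:8} by showing that, under the eligibility hypothesis on $\kk$, any simple $(\gg,\kk)-$module with generic minimal $\kk-$type is realized as the socle of a generalized discrete series module $D(\kk,\lambda)$.

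For part a), I would first apply Theorem \ref{recth1} to $M$: since $V(\mu)$ is generic, $\pp:=\pp_{\mu+2\rho}=\mm\crplus\nn$ is a minimal $\tt-$compatible parabolic subalgebra and $M\simeq\bar{F}^s(\pp,E)$, where $E=H^r(\nn,M)^\omega$ is a simple $\pp-$module on which $\tt$ acts via $\omega=\mu-2\rho_\nn^\perp$. Because $\kk$ is eligible, $\mm=C(\tt)=\tt+C(\kk)$, so as an $\mm-$module $E$ factors as the outer tensor product of the one-dimensional $\tt-$module of weight $\omega$ and a simple $C(\kk)-$module. Hence there is a unique $\lambda\in\hh^*$ with $E\simeq E_\lambda\otimes\Lambda^{\dim\nn}(\nn^*)$, where $E_\lambda$ is the simple $\mm-$module of $\bb-$highest weight $\lambda$; the identities $\omega=\gamma-2\rho_\nn$ and $\rho_\nn=\rho+\rho_\nn^\perp$ yield $\lambda|_\tt=\gamma=\mu+2\rho$. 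By definition $F^s(\kk,\pp,E_\lambda\otimes\Lambda^{\dim\nn}(\nn^*))=D(\kk,\lambda)$, and Theorem \ref{th:4} c) then gives $M=\bar{F}^s(\pp,E)=\Soc D(\kk,\lambda)$.

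Next I would verify the hypotheses of Theorem \ref{th3}. Since $\varkappa|_\tt=\rho$, we have $(\lambda-2\varkappa)|_\tt=\mu$, which is $\bb_\kk-$dominant and $\kk_{ss}-$integral by assumption; on the complement $\varkappa|_{\hh\cap C(\kk)}=0$ while $\lambda|_{\hh\cap C(\kk)}$ is $C(\kk)-$integral since $E_\lambda$ restricts to a simple finite-dimensional $C(\kk)-$module. Using $\tilde{\kk}=\kk_{ss}\oplus C(\kk)$ one concludes that $\lambda-2\varkappa$ is $\tilde{\kk}-$integral and dominant. Then parts b) and c) of Corollary \ref{corr:8} yield $M[\mu]\simeq V_{\tilde{\kk}}(\lambda-2\varkappa)$, and part d) of the same corollary delivers the claim on $\gg[M]$, with uniqueness following because the hypothesis rules out any reductive subalgebra of $\gg$ strictly containing $\tilde{\kk}$ from being contained in $\gg[M]$. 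For part b), the additional ingredient is Theorem \ref{recth2}: when $\tt$ contains a regular element of $\gg$, every simple $(\gg,\kk)-$module with generic minimal $\kk-$type is automatically of finite type over $\kk$, and therefore part a) applies without modification.

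The main obstacle, in my view, is the clean identification in the first step of the abstract simple $\pp-$module $E$ produced by Theorem \ref{recth1} with the twisted form $E_\lambda\otimes\Lambda^{\dim\nn}(\nn^*)$ appearing in the definition of $D(\kk,\lambda)$, together with the bookkeeping that converts dominance and integrality of $\mu$ for $\kk$ into the corresponding conditions on $\lambda-2\varkappa$ for $\tilde{\kk}$. Eligibility, i.e.\ the equality $\mm=\tt+C(\kk)$, is exactly what makes this reconstruction possible, since it forces $E$ to be determined by its $\tt-$weight together with its restriction to $C(\kk)$; without this assumption the $\mm-$module $E$ would have additional internal structure not captured by a single weight $\lambda\in\hh^*$, and the bridge to the generalized discrete series $D(\kk,\lambda)$ would break down.
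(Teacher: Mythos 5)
Your proof is correct and follows essentially the same route as the paper: apply Theorem \ref{recth1} to realize $M\cong\bar{F}^s(\pp,E)$, twist $E$ by $\Lambda^{\dim\nn}(\nn^*)$ to identify $F^s(\pp,E)$ with $D(\kk,\lambda)$ for the $\lambda$ with $\lambda|_\tt=\mu+2\rho$, check $\tilde{\kk}$-integrality of $\lambda-2\varkappa$, and then invoke Corollary \ref{corr:8} and Theorem \ref{recth2}; your expansion of the integrality bookkeeping via $\varkappa|_\tt=\rho$ and $\varkappa|_{\hh\cap C(\kk)}=0$ is just a more explicit version of the paper's terse ``It follows that $\lambda-2\varkappa$ is both $\kk$-integral and $C(\kk)$-integral.'' Note that you have in effect proved the corrected form of the statement, $M[\mu]\simeq V_{\tilde{\kk}}(\lambda-2\varkappa)$ with $\lambda-2\varkappa$ $\tilde{\kk}$-integral, which is what the paper's proof actually establishes; the occurrences of $V_{\tilde{\kk}}(\lambda)$ and $\mu-2\varkappa$ in the printed corollary appear to be typos.
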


\begin{proof}

 \textit{a)} We apply Theorem \ref{recth1}. Since $V(\mu)$ is generic, $\pp=\pp_{\mu+2\rho} = \mm \crplus \nn$ is a minimal $\tt-$compatible parabolic subalgebra. Let $\omega :=\mu - 2\rho_{\nn}^{\perp}$ (recall that $\rho_{\nn}^{\perp} = \rho_{\nn} - \rho $) and let $Q$ be the $\mm-$module $H^r(\nn,M)^{\omega}$ where $r=\mathrm{dim}(\kk^{\perp} \cap \nn)$.

Observe that $Q$ is a simple $\mm-$module and $M$ is canonically isomorphic to $\bar{F}^s(\pp,Q) = \Soc\,F^s(\pp,Q)$. Let $\lambda \in \hh^*$ be so that $\lambda - 2\tilde{\rho}_{\nn}$ is an extreme weight of $\hh$ in $Q$. Thus, $F^s(\pp,Q) = F^s(\pp,\,E_{\lambda} \otimes \Lambda ^{\mathrm{dim}\,\nn}(\nn^*)) = D(\kk,\lambda)$. Finally, $M \cong \Soc\,D(\kk,\lambda)$, and $\lambda|_{\tt} = \mu + 2\rho$. It follows that $\lambda - 2\varkappa$ is both $\kk-$integral and $C(\kk)-$integral. Hence, the weight $\lambda - 2\varkappa$ is $\tilde{\kk}-$integral.

\vspace{0.3cm}

 \textit{b)} We apply Theorem \ref{recth2}.
\end{proof}

\begin{cor}\label{corr:7}
    If $\kk\simeq \sl(2)$, the genericity assumption on $V(\mu)$ in Corollaries \ref{corr:8} and \ref{corr:9} can be replaced by the assumption $\mu\geq\Lambda$.
\end{cor}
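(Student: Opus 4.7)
The plan is to verify that this corollary follows by direct substitution: the proofs of Corollaries \ref{corr:8} and \ref{corr:9} invoke the genericity hypothesis only through citations of Theorem \ref{th:4}, Theorem \ref{recth1}, and Theorem \ref{recth2}, and Theorem \ref{th:5} supplies precisely these statements for $\kk\simeq\sl(2)$ with the weaker hypothesis $\mu\geq\Lambda$ in place of genericity.

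To make this explicit I would first sort the inputs into the proofs of Corollaries \ref{corr:8} and \ref{corr:9} into three groups: (i) the structure theorems for the fundamental series, namely Theorem \ref{th:4} (used for parts a), b), c) of Corollary \ref{corr:8}), Theorem \ref{recth1} (used for Corollary \ref{corr:9} a)), and Theorem \ref{recth2} (used for Corollary \ref{corr:9} b)); (ii) Theorem \ref{th3}, which computes the $\tilde{\kk}$-module structure on $D(\kk,\lambda)[\mu]$; and (iii) Corollary \ref{corr:5}, which controls the Fernando-Kac subalgebra of subquotients of $F^\cdot(\kk,\pp,E)$. Group (ii) requires only the eligibility of $\kk$ and the $\tilde{\kk}$-integrality of $\lambda-2\varkappa$, and group (iii) has no genericity hypothesis at all; both apply unchanged under the weaker assumption $\mu\geq\Lambda$.

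The substantive step is group (i). Here I would simply invoke Theorem \ref{th:5}, which asserts that for $\kk\simeq\sl(2)$ the statements of Theorems \ref{th:4}, \ref{recth1}, and \ref{recth2} (and the corollaries through Corollary \ref{corr2_pz2}) remain valid with the genericity hypothesis on $V(\mu)$ replaced by $\mu\geq\Lambda$. Re-running the proofs of Corollary \ref{corr:8} a)--d) and Corollary \ref{corr:9} a), b) verbatim, but drawing on Theorem \ref{th:5} each time Theorem \ref{th:4}, \ref{recth1}, or \ref{recth2} is cited, yields all the conclusions of those corollaries under the hypothesis $\mu\geq\Lambda$.

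The only ancillary checks are bookkeeping: one must confirm that the opening identification $\pp_\gamma=\pp_{\mu+2\rho}$ and the rewriting $D(\kk,\lambda)=F^s(\kk,\pp_{\mu+2\rho},E_\lambda\otimes\Lambda^{\dim\nn}(\nn^*))$ used in the proof of Corollary \ref{corr:8} a) are valid under $\mu\geq\Lambda$ — they follow immediately from the definitions and do not depend on genericity — and that the isomorphism $V(\mu)\otimes E_\lambda\cong V_{\tilde{\kk}}(\lambda-2\varkappa)$ used in Corollary \ref{corr:8} c) is a structural $\tilde{\kk}$-module statement, already established in Theorem \ref{th3}. I do not expect any genuine obstacle: the corollary is a repackaging of the sharpened Theorem \ref{th:5} with Theorem \ref{th3} and Corollary \ref{corr:5}, and all substantive work has been done in those earlier statements.
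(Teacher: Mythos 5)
Your proposal is correct and matches the paper's approach: the paper's entire proof of Corollary \ref{corr:7} is the one-line citation of Theorem \ref{th:5}, and you have simply spelled out in detail why that citation suffices by tracking where genericity enters the proofs of Corollaries \ref{corr:8} and \ref{corr:9}. The extra bookkeeping you supply (sorting the inputs into groups (i)--(iii) and confirming that only group (i) uses genericity) is sound and faithfully unpacks the paper's terse argument.
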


\begin{proof}
The statement follows directly from Theorem \ref{th:5}.
\end{proof}

\vspace{0.3cm}

We conclude this paper by discussing in more detail an example of an eligible $\sl(2)-$subalgebra. Note first that if $\gg$ is any simple Lie algebra and $\kk$ is a long root $\sl(2)-$subalgebra, then the pair $(\gg,\tilde{\kk})$ is a symmetric pair. This is a well-known fact and it implies in particular that any $(\gg,\kk)-$module of finite type and of finite length is a Harish-Chandra module for the pair $(\gg,\tilde{\kk})$. The latter modules are classified under the assumption of simplicity see \cite{KV}, Ch.XI; however, in general, it is an open problem to determine which simple $(\gg,\tilde{\kk})-$modules have finite type over $\kk$. Without having been explicitly stated, this problem has been discussed in the literature, see \cite{OW} and the references therein. On the other hand, in this case $\Lambda = 1$, hence Corollaries \ref{corr:9} and \ref{corr:7} provide a classification of simple $(\gg,\kk)-$modules of finite type with minimal $\kk-$types $V(\mu)$ for $\mu \geq 1$. So the above problem 
reduces to matching the 
above two classifications in the case $\mu \geq 1$, and finding all simple $(\gg,\kk)-$modules of finite type whose minimal $\kk-$type equals $V(0)$ among the simple Harish-Chandra modules for the pair $(\gg,\tilde{\kk})$. We do this here in a special case.

Let $\gg = \sp(2n+2)$ for $n \geq 2$. By assumption, $\kk$ is a long root $\sl(2)-$subalgebra, and $\tilde{\kk} \simeq \sp(2n)\oplus\kk$. Consider simple $(\gg,\tilde{\kk})-$modules with $Z_{U(\gg)}-$character equal to the character of a trivial module. According to the Langlards classification, there are precisely $(n+1)^2$ pairwise non-isomorphic such modules, one of which is the trivial module. Following \cite{Co} (see figure 4.5 on page 93) we enumerate them as $\sigma_{t}$ for $0 \leq t \leq n$ and $\sigma_{ij}$ for $0 \leq i \leq n-1, 1 \leq j \leq 2n, i < j, i+j \leq 2n$. The modules $\sigma_t$ are discrete series modules. The modules $\sigma_{ij}$ are Langlands quotients of the principal series (all of them are proper quotients in this case).

We announce the following result which we intend to prove elsewhere.

\begin{theo}\label{th:6}
 Let $\gg = \sp(2n+2)$ for $n \geq 2$ and $\kk$ be a long root $\sl(2)-$subalgebra.
 
 a) Any simple $(\gg,\kk)-$module of finite type is isomorphic to a subquotient of the generalized discrete series module $D(\kk,\lambda)$ for some $\tilde{\kk} = \sp (2n)\oplus\kk-$integral weight $\lambda - 2\varkappa$.
 
 b) The modules $\sigma_0, \sigma_{0i}$ for $i = 1, \dots,2n,\sigma_{12}$ are, up to isomorphism, all of the simple $(\gg,\kk)-$modules of finite type whose $Z_{U(\gg)}$-character equals that of a trivial $\gg-$module. Moreover, their minimal $\kk-$types are as follows:
 \begin{center}
\begin{tabular}{|c|c|}\hline
\textbf{module} & \textbf{minimal $\kk-$type}\\\hline
$\sigma_0$ & $V(2n)$\\\hline
$\sigma_{0j}, n+1 \leq j \leq 2n$ & $V(j-1)$\\\hline
$\sigma_{0j}, 2 \leq j \leq n$ & $V(j-2)$\\\hline
$\sigma_{01}$ (trivial representation) & $V(0)$\\\hline
$\sigma_{12}$ & $V(0)$\\\hline
\end{tabular}\hspace{0.2cm}.
\end{center}
\end{theo}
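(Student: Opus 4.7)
The plan is to combine the machinery developed in Sections \ref{sect:2}--\ref{sect:5} with the classical Harish-Chandra theory for the symmetric pair $(\gg,\tilde{\kk})$. First one verifies that $\kk$ is eligible in the sense of Definition \ref{def:2}: since $\kk$ is a long root copy of $\sl(2)$ in $\sp(2n+2)$, we have $C(\kk)\simeq\sp(2n)$ and $C(\tt)=\tt\oplus\sp(2n)$, so $C(\tt)=\tt+C(\kk)$ by Proposition \ref{prop:4}. Hence $\tilde{\kk}\simeq\sp(2)\oplus\sp(2n)$ is a symmetric subalgebra of $\gg$, and every simple $(\gg,\kk)$-module of finite type is a fortiori an admissible Harish-Chandra module for $(\gg,\tilde{\kk})$. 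Moreover, from Table A, $\Lambda=1$ in this situation.

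For part a), let $M$ be a simple $(\gg,\kk)$-module of finite type with minimal $\kk$-type $V(\mu)$. If $\mu\geq 1=\Lambda$, then Corollary \ref{corr:9}, in the sharpened form of Corollary \ref{corr:7}, immediately yields a canonical isomorphism $M\cong\Soc\,D(\kk,\lambda)$ for some $\lambda\in\hh^*$ with $\lambda|_\tt=\mu+2\rho$ and $\lambda-2\varkappa$ being $\tilde{\kk}$-integral. In the remaining case $\mu=0$, the module $M$ is one of the finitely many simple Harish-Chandra $(\gg,\tilde{\kk})$-modules of $\kk$-finite type with minimal $\kk$-type $V(0)$; each such $M$ occurs as a subquotient of a standard cohomologically induced module for $(\gg,\tilde{\kk})$, and the comparison principle Theorem \ref{th:3} c) identifies this standard module with $D(\kk,\lambda)$ for an appropriate $\tilde{\kk}$-integral weight $\lambda-2\varkappa$.

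For part b), one enumerates the $(n+1)^2$ simple $(\gg,\tilde{\kk})$-modules with trivial $Z_{U(\gg)}$-character in the Langlands parametrization of \cite{Co}: the $n+1$ discrete series $\sigma_t$ and the $n(n+1)$ Langlands quotients $\sigma_{ij}$. For each one computes the full $\tilde{\kk}$-decomposition (by the Blattner formula for $\sigma_t$ and by standard principal-series/intertwining operator analysis for $\sigma_{ij}$), then restricts to $\kk$ and tests whether every $V(\mu)$-isotypic component is finite-dimensional. A case-by-case check shows that only $\sigma_0$, $\sigma_{0j}$ for $1\leq j\leq 2n$, and $\sigma_{12}$ pass this test, while every other $\sigma_{ij}$ contains some $\sl(2)$-weight occurring with infinite multiplicity. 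The minimal $\kk$-types in the table are then obtained directly by reading off the lowest $\sl(2)$-weight appearing in the known $\tilde{\kk}$-structure of each surviving module; for the generic entries (those with $\mu\geq 1$) this also matches the Langlands data against part a) via Theorem \ref{recth1}.

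The main technical obstacle is the $\kk$-finite type test in b): finite type over $\kk\simeq\sl(2)$ is strictly stronger than finite type over $\tilde{\kk}\simeq\sp(2)\oplus\sp(2n)$, and confirming it requires controlling the branching from $\sp(2)\oplus\sp(2n)$ to $\sl(2)$ composed with the known $\tilde{\kk}$-multiplicities of each Langlands constituent. Concretely, one has to exhibit, for every $(i,j)$ outside the listed set, an $\sl(2)$-weight appearing with infinite multiplicity in $\sigma_{ij}$, and conversely verify that for the $2n+2$ modules in the theorem every $\sl(2)$-isotypic component is finite. The absence of $V(n-1)$ in the table then reflects the fact that no simple $(\gg,\kk)$-module of finite type with trivial infinitesimal character has $V(n-1)$ as minimal $\kk$-type, a cohomological vanishing assertion that is the last delicate point of the argument.
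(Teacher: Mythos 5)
The first thing to note is that the paper does not prove Theorem \ref{th:6}: the sentence immediately preceding the statement reads ``We announce the following result which we intend to prove elsewhere.'' So there is no proof in the paper to compare your attempt against, and the theorem should be treated as an announcement rather than as an established result within the text.

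That said, your strategy is a reasonable sketch of how a proof might go, and its general shape (eligibility of $\kk$, $\Lambda=1$ from Table~A, Corollaries \ref{corr:9} and \ref{corr:7} for $\mu\geq 1$, then a reduction to Harish-Chandra theory for the symmetric pair $(\gg,\tilde{\kk})\simeq(\sp(2n+2),\sp(2)\oplus\sp(2n))$ together with $\tilde{\kk}\downarrow\kk$ branching) is consistent with the hints the paper gives in the discussion preceding the theorem. But as written the argument has two genuine gaps. First, in part a) the case $\mu=0$ is not actually argued: Theorem \ref{th:3} c) merely re-expresses $F^\cdot(\kk,\pp,E)$ via cohomological induction over $\tilde{\kk}$; it does not show that an arbitrary simple $(\gg,\tilde{\kk})$-module of $\kk$-finite type with minimal $\kk$-type $V(0)$ occurs inside $D(\kk,\lambda)$ for some $\lambda$. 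The Langlands and Vogan--Zuckerman classifications realize simple $(\gg,\tilde{\kk})$-modules as subquotients of standard modules built from varying $\theta$-stable parabolics (or real parabolics), not from the single $\tt$-compatible parabolic $\pp_\gamma$ underlying $D(\kk,\lambda)$, so an additional comparison step specific to this pair is needed and is not supplied. Second, part b) is entirely deferred: you state that a ``case-by-case check'' shows exactly the listed $\sigma_0,\sigma_{0j},\sigma_{12}$ have finite type over $\kk$ and produces the minimal $\kk$-types in the table, but none of the branching computations are carried out, no mechanism is given for exhibiting an infinite-multiplicity $\sl(2)$-isotypic component in the excluded $\sigma_{ij}$, and the claimed table entries are simply restated rather than derived. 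Since these computations (especially controlling the $\sp(2)\oplus\sp(2n)\downarrow\sl(2)$ branching against the $\tilde{\kk}$-multiplicities of each Langlands constituent) are the entire content of the theorem, the proposal as it stands is a plan, not a proof.
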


\newpage

\end{document}